\newtheorem{thm}{Theorem}[section]
\newtheorem{pro}[thm]{Proposition}
\newtheorem{lem}[thm]{Lemma}
\newtheorem{cor}[thm]{Corollary}
\newtheorem*{rem*}{Remarks}
\newtheorem{rems}[thm]{Remark}
\newtheorem*{conj*}{Conjecture}
\DeclareMathOperator{\Q}{\mathbb{Q}}
\DeclareMathOperator{\Z}{\mathbb{Z}}
\DeclareMathOperator{\Spec}{Spec}
\DeclareMathOperator{\id}{id}
\DeclareMathOperator{\res}{res}
\newcommand{\mrm}{\mathrm}
\newcommand{\mbb}{\mathbb}
\title{Kato explicit reciprocity law for Siegel modular forms of weight $(3, 3)$}
\author{Francesco Lemma}
\address{Universit\'e Paris Cit\'e and Sorbonne Universit\'e, CNRS, Institut math\'ematique de Jussieu-Paris Rive Gauche, UMR 7586, B\^atiment Sophie Germain, Case 7012, 75205 Paris Cedex 13, France.}
\email{francesco.lemma@imj-prg.fr}
\author{Tadashi Ochiai}
\address{Department of Mathematics, Institute of Science Tokyo, 2-12-1 Ookayama, Meguro-ku, Tokyo 152-8551, Japan}
\email{ochiai@math.titech.ac.jp}
\thanks{This work was supported by JSPS KAKENHI (Grant-in-Aid for Scientific Research (B): Grant Number 23K25763, 
Grant-in-Aid for Challenging Exploratory Research: Grant Number 23K17651}
\begin{document}

\begin{abstract} 
We extend Kato explicit reciprocity law, in the version written by Scholl, for a modular curve to a product of two modular curves. By embedding the product of two modular curves in the Siegel threefold, we deduce an explicit reciprocity law for the unique critical twist of the $p$-adic Galois representation attached to cuspidal Siegel modular forms of weight $(3,3)$.
\end{abstract}

\maketitle

\tableofcontents

\section{Motivation and statement of the main result} \label{intro}

In the papers \cite{kato99} and \cite{kato04}, Kato constructed an Euler system related to critical values of $L$-functions of modular forms. By using this Euler system, he bounded Selmer groups attached to modular forms and proved one of the divisibilities of the Iwasawa main conjecture for modular forms. The main ingredient in the work of Kato is the so called generalized explicit reciprocity law, which is the computation of the image by Bloch-Kato dual exponential map of the Euler system in terms of the the product of two Eisenstein series. Via the method of Rankin-Selberg, this allows Kato to relate the image of the Euler system by the dual exponential map to critical values of $L$-functions of modular forms of weight $k \geq 2$. In the particular case of modular forms of weight $2$ with good reduction at the prime $p$, Kato's ideas were exposed by Scholl in the paper \cite{scholl}. In this paper, we extend the main result \cite[Corollary 3.2.4]{scholl} of the article \cite{scholl} for a modular curve to a product of two modular curves. By embedding the product of two modular curves in the Siegel threefold, we deduce an explicit reciprocity law for the  unique critical twist of the $p$-adic Galois representation attached to Siegel modular forms of weight $(3,3)$, which is the minimal cohomological weight.\\

To state the main result of this article, we need to introduce some notation. Let $I_2$ be the identity matrix of size two and let $\psi$ be the $4$ by $4$ matrix given by
$$
\psi=
\begin{pmatrix}
 & I_2\\
-I_2 & \\
\end{pmatrix}.
$$
The symplectic group $G=\mathrm{GSp}(4)$ is defined as
$$
G=\{g \in \mathrm{GL}(4) \,|\, ^tg \psi g = \nu(g) \psi, \, \nu(g) \in \mathrm{GL}(1)\}.
$$
It is a reductive linear algebraic group over $\mbb{Q}$ and contains the group $H=\mathrm{GL}(2) \times_{\mathrm{GL}(1)} \mathrm{GL}(2)$, where the fiber product is over the determinant, via the embedding
$\iota: H \hookrightarrow G$ 
defined by
$$
\iota\left( \begin{pmatrix}
a & b\\
c & d\\
\end{pmatrix}, \begin{pmatrix}
a' & b'\\
c' & d'\\
\end{pmatrix} \right) =
\begin{pmatrix}
a & & b & \\
 & a' &  & b'\\
c &  & d & \\
 & c' &  & d'\\
\end{pmatrix}.
$$
Let $p>2$ denote a fixed prime number and let $N \geq 3$ denote an integer prime to $p$. Let $\mathfrak{o}$ denote the $\Z_p$-algebra $\Z_p[\zeta_N]$ and $K$ denote its fraction field. We let $\mathcal{Y}_H(N)/\mathfrak{o}$, resp. $\mathcal{Y}_G(N)/\mathfrak{o}$, denote the canonical integral model over $\mathfrak{o}$ of the Shimura variety of $H$, resp. of $G$, at principal level $N$. We have $\mathcal{Y}_H(N)=\mathcal{Y}(N) \times_\mathfrak{o} \mathcal{Y}(N)$, where $\mathcal{Y}(N)$ is the canonical integral model over $\mathfrak{o}$ of the modular curve of principal level $N$. Then $\mathcal{Y}_H(N)/\mathfrak{o}$, resp. $\mathcal{Y}_G(N)/\mathfrak{o}$, is smooth of relative dimension $2$, resp. $3$. By a slight abuse of notation, we denote again by $\iota: \mathcal{Y}_H(N) \hookrightarrow \mathcal{Y}_G(N)$ the closed embedding over $\mathfrak{o}$ induced by the embedding of groups $\iota$. For $n \geq 1$, we let $\mathfrak{o}_n$ denote the discrete valuation ring $\mathbb{Z}_p[\zeta_{Np^n}]$ and let $K_n$ denote its fraction field.\\

Let us recall that for any continuous $p$-adic representation $V$ of the absolute Galois group of a finite extension of $L/\mathbb{Q}_p$, Bloch-Kato \cite{bloch-kato} defined the dual exponential map
$$
\mathrm{exp}^*: H^1\left(L_m, V \right) \rightarrow L_m \otimes_{L} \mrm{Fil}^0 D_{dR}\left( V \right)
$$
for any finite extension $L_m/L$. In what follows, we take $V=H^3_{\text{\'et}}(\mathcal{Y}_G(N) \otimes_{\mathfrak{o}} \overline{K}, \Q_p(2))$, which is a continuous $p$-adic representation of the absolute Galois group of $K$. Noting that, by \cite{beilinson-padic}, we have a canonical isomorphism of $\mathbb{Q}_p$-vector spaces
$$
\mrm{Fil}^0 D_{dR}\left( H^3_{\text{\'et}}(\mathcal{Y}_G(N) \otimes_{\mathfrak{o}} \overline{K}, \Q_p(2)) \right) \simeq K \otimes_{\mathfrak{o}} \mrm{Fil}^2 H^3_{dR}(\mathcal{Y}_G(N)),
$$
the map $\mathrm{exp^*}$ can be regarded as a map
$$
\mathrm{exp}^*: H^1\left(K_m, H^3_{\text{\'et}}(\mathcal{Y}_G(N) \otimes_{\mathfrak{o}} \overline{K}, \Q_p(2)) \right) \rightarrow K_m \otimes_{\mathfrak{o}} \mrm{Fil}^2 H^3_{dR}(\mathcal{Y}_G(N)).
$$
This is the lower right hand vertical map in the diagram of the following theorem, whose commutativity is the main result of this article. For a precise definition of the other spaces and morphisms composing this diagram, we refer the reader to section \ref{main_cc}. 

\begin{thm} \label{main} Let $m \geq 1$ be an integer. The diagram 
$$
\begin{tikzcd}
\underleftarrow{\lim}_n K_2\left(\mathcal{Y}_H(N)\otimes_{\mathfrak{o}} \mathfrak{o}_n \right) \otimes \mu_{p^n}^{-1}  \ar[r, "\iota_* \circ (Tr_{n,m} \circ Ch_n)_{n \geq 1}"] \ar[d, "d\log" left]& H^4_{\text{\'et}} \left( \mathcal{Y}_G(N) \otimes_{\mathfrak{o}} K_m, \Z_p(2) \right)^0 \ar[d, "Hochschild-Serre"]\\
\underleftarrow{\lim}_n H^0\left(\overline{\mathcal{Y}_H(N)} \otimes_{\mathfrak{o}} \mathfrak{o}_n, \Omega^2_{\overline{\mathcal{Y}_H(N)} \otimes \mathfrak{o}_n}(\log) \right)(-1) \ar[d, "pr" left] & H^1\left(K_m, H^3_{\text{\'et}}(\mathcal{Y}_G(N) \otimes_{\mathfrak{o}} \overline{K}, \Z_p(2)) \right) \ar[d]\\
\underleftarrow{\lim}_n \left( \Omega^1_{\mathfrak{o}_n/\mathfrak{o}}(-1) \otimes_{\mathfrak{o}} \mrm{Fil}^1 H^1_{dR}(\mathcal{Y}_H(N)) \right) \ar[d, "d \log \zeta_{p^n} \otimes \langle \zeta_{p^n} \rangle^{-1} \mapsto 1" left] & H^1\left(K_m, H^3_{\text{\'et}}(\mathcal{Y}_G(N) \otimes_{\mathfrak{o}} \overline{K}, \Q_p(2)) \right) \ar[d, "\mrm{exp}^*"]\\
\underleftarrow{\lim}_n \left( \mathfrak{o}_n/\mathfrak{d}_n \otimes_{\mathfrak{o}} \mrm{Fil}^1 H^1_{dR}(\mathcal{Y}_H(N)) \right) \ar[r, "\iota_* \circ (1/p^n tr_{n,m})_{n \geq m}"] & K_m \otimes_{\mathfrak{o}} \mrm{Fil}^2 H^3_{dR}(\mathcal{Y}_G(N)).
\end{tikzcd}
$$
is commutative.
\end{thm}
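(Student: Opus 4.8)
The plan is to factor the diagram through an intermediate column recording the \'etale cohomology of the product $\mathcal{Y}_H(N)$ itself, and thereby to separate the statement into two essentially independent inputs: an intrinsic explicit reciprocity law on the surface $\mathcal{Y}_H(N)$, and the functoriality of every map under the codimension-one Gysin pushforward $\iota_*$. Between the left column and the Siegel-threefold column I would insert the tower $\underleftarrow{\lim}_n H^2_{\text{\'et}}(\mathcal{Y}_H(N)\otimes_\mathfrak{o}\mathfrak{o}_n,\Z_p(2))\otimes\mu_{p^n}^{-1}$, its Hochschild--Serre image in $H^1(K_m,H^1_{\text{\'et}}(\mathcal{Y}_H(N)\otimes_\mathfrak{o}\overline K,\Z_p(1)))$, and the rational version mapping under $\mrm{exp}^*$ to $K_m\otimes_\mathfrak{o}\mrm{Fil}^1 H^1_{dR}(\mathcal{Y}_H(N))$. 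The top arrow of the theorem then factors as $\iota_*$ after $(Tr_{n,m}\circ Ch_n)_n$, and the bottom arrow factors through the de Rham Gysin map $\iota_*\colon \mrm{Fil}^1 H^1_{dR}(\mathcal{Y}_H(N))\to\mrm{Fil}^2 H^3_{dR}(\mathcal{Y}_G(N))$, so it suffices to prove commutativity of the inner $\mathcal{Y}_H(N)$-diagram and of the outer squares joining the two \'etale columns.

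For the inner diagram I would run Scholl's syntomic argument directly on the surface. The relevant $K_2$-classes are external products $\{\mathrm{pr}_1^* u,\mathrm{pr}_2^* v\}$ of Siegel units on the two factors, whose logarithmic derivative $d\log(\mathrm{pr}_1^* u)\wedge d\log(\mathrm{pr}_2^* v)$ splits, over $\mathfrak{o}$, into a purely geometric $\Omega^2_{\mathcal{Y}_H(N)/\mathfrak{o}_n}$-part and an arithmetic--geometric part in $\Omega^1_{\mathfrak{o}_n/\mathfrak{o}}\otimes\Omega^1$. Unlike in the curve case, the geometric part is now nonzero, and the projection $pr$ discards it, retaining $\eta\wedge(c_u\,\mathrm{pr}_2^*\omega_v-c_v\,\mathrm{pr}_1^*\omega_u)$ with $\eta=d\log\zeta_{p^n}$; here the scalars $c_u,c_v$ are exactly the cyclotomic leading terms computed by Scholl on a single modular curve, and $\omega_u,\omega_v$ are the corresponding classes in $\mrm{Fil}^1 H^1_{dR}$ on each factor, combined through the K\"unneth decomposition $H^1_{dR}(\mathcal{Y}_H(N))=H^1_{dR}(\mathcal{Y}(N))^{\oplus 2}$. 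On the \'etale side, the identification of $\mrm{exp}^*$ composed with the Hochschild--Serre edge map with this de Rham class is proved, as in \cite{scholl}, by comparing the \'etale and de Rham Chern classes through the $p$-adic comparison isomorphism of \cite{beilinson-padic}; the only analytic input, the coefficient $c$ of a Siegel unit, is quoted from the curve case.

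For the outer squares I would invoke the functoriality of $\iota_*$. The Gysin map commutes with Chern classes, because proper pushforward in $K$-theory is compatible with pushforward in \'etale cohomology; it commutes with the Hochschild--Serre spectral sequences, being a morphism of the associated $G_K$-equivariant filtrations; and it commutes with $\mrm{exp}^*$. This last point is pure functoriality of the Bloch--Kato dual exponential applied to the morphism of de Rham Galois representations $\iota_*\colon H^1_{\text{\'et}}(\mathcal{Y}_H(N)\otimes_\mathfrak{o}\overline K,\Q_p(1))\to H^3_{\text{\'et}}(\mathcal{Y}_G(N)\otimes_\mathfrak{o}\overline K,\Q_p(2))$, once one knows that the induced map on $D_{dR}$ agrees, under the comparison isomorphism, with the de Rham Gysin map, which raises the Hodge filtration by the codimension one; this is the standard compatibility of the comparison isomorphism with Gysin maps.

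The step I expect to be the main obstacle is the precise matching of the integral normalizations throughout the cyclotomic tower and across $\iota_*$ simultaneously. One must reconcile the factor $1/p^n$ in the bottom arrow, the identification $\Omega^1_{\mathfrak{o}_n/\mathfrak{o}}\cong\mathfrak{o}_n/\mathfrak{d}_n$ via the different, and the twists $\mu_{p^n}^{-1}$ and $\langle\zeta_{p^n}\rangle^{-1}$ with the trace maps $Tr_{n,m}$ and $tr_{n,m}$ and with the Gysin pushforward, all at the level of $\Z_p$-lattices rather than after inverting $p$, where $\mrm{exp}^*$ is only defined; and then verify that these compatibilities survive the passage to $\underleftarrow{\lim}_n$. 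Establishing the compatibility of Beilinson's comparison isomorphism with the Gysin map integrally, and tracking the different $\mathfrak{d}_n$ against the normalization $1/p^n$ along the tower, is the delicate technical heart of the argument; it is precisely Scholl's bookkeeping for the modular curve, here reproduced on the product and then propagated through $\iota_*$ into the cohomology of the Siegel threefold.
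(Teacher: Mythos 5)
Your global strategy --- factoring the theorem through the corresponding diagram for $\mathcal{Y}_H(N)$ itself and then pushing forward by $\iota_*$ using functoriality of the Hochschild--Serre spectral sequence and of the Bloch--Kato dual exponential --- is exactly how the paper reduces Theorem \ref{main} to Corollary \ref{main'12} (see the proof of Corollary \ref{main12}), so your treatment of the outer squares is in order. One omission there: the top row of the theorem carries no homological-triviality superscript, so before the Hochschild--Serre edge map can be applied one must know that every element of $\underleftarrow{\lim}_n K_2(\mathcal{Y}_H(N)\otimes_{\mathfrak{o}}\mathfrak{o}_n)\otimes\mu_{p^n}^{-1}$ is automatically homologically trivial; the paper proves this (Proposition \ref{homologicaltriviality}) from the vanishing of $\underleftarrow{\lim}_m H^0(K_m, H^2_{\text{\'et}}(\mathcal{Y}_H(N)\otimes_{\mathfrak{o}}\overline{K},\Z_p(1)))$ under corestriction, which you do not address.

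The genuine gap is in your proof of the inner diagram. First, the statement concerns arbitrary elements of $K_2(\mathcal{Y}_H(N)\otimes_{\mathfrak{o}}\mathfrak{o}_n)$, not only external products $\{\mathrm{pr}_1^*u,\mathrm{pr}_2^*v\}$ of Siegel units; such classes do not generate, so verifying commutativity on them and invoking the K\"unneth decomposition does not prove the theorem. Second, and more importantly, the actual technical core is not ``Scholl's bookkeeping reproduced on the product.'' The paper localizes at the generic point $\eta$ of the special fibre of $\overline{\mathcal{Y}_H(N)}$ and works with the completed local ring $\mathcal{O}$, whose residue field $l$ now satisfies $[l:l^p]=p^2$. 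The decisive step (Theorem \ref{main22}, resting on Proposition \ref{hyodo-tate}) is that the connecting map $\delta_{\mathcal{K}_\infty/\mathcal{K}}:\widehat{\Omega}^1_{\mathcal{O}/\mathfrak{o}}\otimes_{\mathcal{O}}\widehat{\mathcal{O}_\infty}\to H^1(\mathcal{K}_\infty, T_p\widehat{\Omega}^1_{\overline{\mathcal{O}}/\mathfrak{o}})$ has kernel and cokernel killed by a fixed power of $p$. In dimension one this is Scholl's Proposition 3.4.12, but here it requires a new almost-computation of $H^1(\mathcal{M}_\infty/\mathcal{K}_\infty,\widehat{\mathcal{B}_\infty}(1))$ over the rank-two Kummer tower $\mathcal{M}=\mathcal{K}(T_1^{p^{-\infty}},T_2^{p^{-\infty}})$: one computes the cohomology of a $\Z_p(1)^2$-extension on each isotypic piece $\widehat{\mathcal{O}_\infty}T_1^{a_1}T_2^{a_2}$ (Lemma \ref{colmez}) and shows the error is killed by $(\zeta_p-1)^2$ (Lemma \ref{faltings}, a statement asserted without proof by Faltings and proved in the paper). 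Your proposal offers no substitute for this --- neither a syntomic argument nor an appeal to the comparison isomorphism of \cite{beilinson-padic} yields the almost-isomorphism $\delta_{\mathcal{K}_\infty/\mathcal{K}}$ --- so the ``delicate technical heart'' you identify (integral normalizations along the tower) is not where the real difficulty lies.
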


Let $\pi$ be the automorphic representation of $G(\mathbb{A})$ corresponding to a cuspidal Siegel modular form of weight $(3,3)$ which is not endoscopic nor CAP. Let $\pi_f$ denote the non-archimedean component of $\pi$ and let $E$ denote the field of rationality of $\pi_f$, which is a number field. Let $K_G(N) \subset G(\widehat{\mathbb{Z}})$ denote the principal congruence subgroup of level $N$ and let $Y_G(N)/\mathbb{Q}$ denote the Shimura vairety of $G$ at level $K_G(N)$. The $\pi_f^{K_G(N)}$-isotypical component of $H^3_{\text{\'et}}\left(Y_G(N) \otimes_{\mathbb{Q}} \overline{\mathbb{Q}}, \mathbb{Q}_p \right) \otimes_{\mathbb{Q}} E$ is $V_\pi \otimes_{\mathbb{Q}}\pi_f^{K_G(N)}$ where $V_\pi$ is the four dimensional $p$-adic Galois representation attached to $\pi$ \cite{weissauer}. By choosing a vector $\psi_f \in \check{\pi}_f^{K_G(N)}$, where $\check{\pi}_f$ denotes the contragredient representation of $G(\mathbb{A}_f)$, we obtain the Galois equivariant map 
$$
H^3_{\text{\'et}}\left(Y_G(N) \otimes_{\mathbb{Q}} \overline{\mathbb{Q}}, \mathbb{Q}_p \right) \otimes_{\mathbb{Q}} E \rightarrow V_\pi.
$$
Hence, as a direct consequence of Theorem \ref{main}, we obtain an explicit reciprocity law for the twist $V_\pi(2)$ of $V_\pi$.

\begin{cor} \label{main-cor} Let $m \geq 1$ be an integer. The diagram 
$$
\begin{tikzcd}
\underleftarrow{\lim}_n K_2\left(\mathcal{Y}_H(N)\otimes_{\mathfrak{o}} \mathfrak{o}_n \right) \otimes \mu_{p^n}^{-1}  \ar[r, "\iota_* \circ (Tr_{n,m} \circ Ch_n)_{n \geq 1}"] \ar[d, "d\log" left]& H^4_{\text{\'et}} \left( \mathcal{Y}_G(N) \otimes_{\mathfrak{o}} K_m, \Z_p(2) \right)^0 \ar[d, "Hochschild-Serre"]\\
\underleftarrow{\lim}_n H^0\left(\overline{\mathcal{Y}_H(N)} \otimes_{\mathfrak{o}} \mathfrak{o}_n, \Omega^2_{\overline{\mathcal{Y}_H(N)} \otimes \mathfrak{o}_n}(\log) \right)(-1) \ar[d, "pr" left] & H^1\left(K_m, H^3_{\text{\'et}}(\mathcal{Y}_G(N) \otimes_{\mathfrak{o}} \overline{K}, \Z_p(2)) \right) \ar[d]\\
\underleftarrow{\lim}_n \left( \Omega^1_{\mathfrak{o}_n/\mathfrak{o}}(-1) \otimes_{\mathfrak{o}} \mrm{Fil}^1 H^1_{dR}(\mathcal{Y}_H(N)) \right) \ar[d, "d \log \zeta_{p^n} \otimes \langle \zeta_{p^n} \rangle^{-1} \mapsto 1" left] & H^1\left(K_m, V_\pi(2)) \right) \ar[d, "\mrm{exp}^*"]\\
\underleftarrow{\lim}_n \left( \mathfrak{o}_n/\mathfrak{d}_n \otimes_{\mathfrak{o}} \mrm{Fil}^1 H^1_{dR}(\mathcal{Y}_H(N)) \right) \ar[r, "\iota_* \circ (1/p^n tr_{n,m})_{n \geq m}"] & K_m \otimes_{K} \mrm{Fil}^2 D_{dR}(V_\pi)
\end{tikzcd}
$$
is commutative.
\end{cor}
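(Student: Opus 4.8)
The plan is to deduce Corollary~\ref{main-cor} from Theorem~\ref{main} by post-composing the right-hand column of the diagram of Theorem~\ref{main} with a projection onto the $\pi$-isotypical part of the cohomology of the Siegel threefold, realized compatibly on the \'etale and de Rham sides. The diagram carries horizontal maps only along its top and bottom edges, so its commutativity is the equality of the composite obtained by going across the top and down the right-hand column with the composite obtained by going down the left-hand column and across the bottom. Since the left-hand column, the top map, and the upper two terms of the right-hand column are identical in both statements, it will be enough to produce compatible projections $\mathrm{pr}^{\text{\'et}}_\pi$ and $\mathrm{pr}^{dR}_\pi$ on the two realizations and to check that they intertwine the two dual exponential maps; the commutativity of the corollary will then follow by applying $\mathrm{pr}^{dR}_\pi$ to the commutative identity of Theorem~\ref{main}.

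First I would construct the projections. After extending scalars from $\Q_p$ to $E \otimes_{\Q} \Q_p$ — a flat operation preserving commutativity, as $E$ is a number field — the Hecke algebra at level $K_G(N)$ acts on both realizations of $H^3$ of the Siegel threefold, commuting with the Galois action and preserving the Hodge filtration. The generic fibre of $\mathcal{Y}_G(N)$ being $Y_G(N) \otimes_{\Q} K$, invariance of \'etale cohomology under extension of algebraically closed fields identifies $H^3_{\text{\'et}}(\mathcal{Y}_G(N) \otimes_{\mathfrak{o}} \overline{K}, \Q_p)$ with the restriction to $\mathrm{Gal}(\overline{K}/K)$ of $H^3_{\text{\'et}}(Y_G(N) \otimes_{\Q} \overline{\Q}, \Q_p)$, and the choice of $\psi_f \in \check{\pi}_f^{K_G(N)}$ of the introduction then yields the Galois-equivariant projection $\mathrm{pr}^{\text{\'et}}_\pi$ onto $V_\pi$ \cite{weissauer}. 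The same choice defines on the de Rham side a projection $\mathrm{pr}^{dR}_\pi \colon H^3_{dR}(\mathcal{Y}_G(N)) \otimes_{\Q} E \to D_{dR}(V_\pi)$ which, being a Hecke idempotent and hence strict for the Hodge filtration, carries $\mrm{Fil}^2$ onto $\mrm{Fil}^2 D_{dR}(V_\pi)$.

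Next I would check the three required compatibilities. By construction the map from the second to the third term of the right-hand column in the corollary is the rationalization of Theorem~\ref{main} followed by $H^1(K_m, -)$ of $\mathrm{pr}^{\text{\'et}}_\pi$, and the bottom horizontal map of the corollary is that of Theorem~\ref{main} followed by $\mathrm{pr}^{dR}_\pi$; both identities are immediate from the way the corollary is set up. The single substantive point is then that $\mrm{exp}^*$ for $V_\pi(2)$ intertwines with $\mrm{exp}^*$ for $H^3_{\text{\'et}}(\mathcal{Y}_G(N) \otimes_{\mathfrak{o}} \overline{K}, \Q_p(2))$ through the two projections, that is, $\mathrm{pr}^{dR}_\pi \circ \mrm{exp}^* = \mrm{exp}^* \circ H^1(K_m, \mathrm{pr}^{\text{\'et}}_\pi)$. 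Since the Bloch--Kato dual exponential map \cite{bloch-kato} is functorial in the $p$-adic representation, this reduces to the assertion that $\mathrm{pr}^{\text{\'et}}_\pi$ and $\mathrm{pr}^{dR}_\pi$ correspond to one another under the $p$-adic comparison isomorphism of Beilinson \cite{beilinson-padic}, compatibly with the identification $\mrm{Fil}^0 D_{dR}(V_\pi(2)) \simeq K \otimes_{\mathfrak{o}} \mrm{Fil}^2 D_{dR}(V_\pi)$ by which the target of $\mrm{exp}^*$ is rewritten.

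The main obstacle is precisely this last compatibility: one must know that Beilinson's comparison isomorphism \cite{beilinson-padic}, together with the induced identification of $\mrm{Fil}^2$ used in Theorem~\ref{main}, is equivariant for the full Hecke action at level $K_G(N)$, so that projecting the \'etale realization of $H^3(\mathcal{Y}_G(N))$ onto $V_\pi$ matches projecting its de Rham realization onto $D_{dR}(V_\pi)$. This Hecke-equivariance is standard, the Hecke operators being induced by algebraic correspondences on the Shimura variety and the comparison isomorphism being functorial for such correspondences. Granting it, the right-hand composite of the corollary equals $\mathrm{pr}^{dR}_\pi$ applied to that of Theorem~\ref{main}, and likewise for the bottom map; the commutativity of Corollary~\ref{main-cor} therefore follows at once from that of Theorem~\ref{main}.
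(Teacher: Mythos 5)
Your proposal is correct and follows essentially the same route as the paper, which simply asserts Corollary \ref{main-cor} as a direct consequence of Theorem \ref{main} via the Galois-equivariant projection onto $V_\pi$ obtained from the choice of $\psi_f \in \check{\pi}_f^{K_G(N)}$. Your write-up merely makes explicit the compatibilities (functoriality of the Bloch--Kato dual exponential map and Hecke-equivariance of the comparison isomorphism) that the paper leaves implicit.
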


\begin{rems} We know that the Hodge decomposition of the $\pi_f^{K_G(N)}$-isotypical component of the Betti cohomology $H^3_{B}\left(Y_G(N)(\mathbb{C}), \mathbb{Q} \right) \otimes_{\mathbb{Q}} E$ has Hodge types $(3,0), (2,1), (1,2), (0,3)$. It follows that the unique critical twist of $V_\pi$, in the sense of \cite{deligne}, is $V_\pi(2)$. As a consequence Corollary \ref{main-cor} is an explicit reciprocity law for the unique critical twist of $V_\pi$.
\end{rems}

\textbf{Acknowledgements.} It is a pleasure to thank Antonio Cauchi, Pierre Colmez and Joaquin Rodrigues Jacinto for useful remarks on this work. Parts of this work were written while the first named author was visiting the Institute of Science Tokyo, which he would like to thank for providing excellent work conditions.
 Parts of this work were written while the second named author was visiting Universit\'e Paris Cit\'e and Sorbonne Universit\'e. He would like to thank for their hospitality.
\section{The explicit reciprocity law}

\subsection{Some commutative diagrams} \label{main_cc}To state our theorem, we introduce the following notation. For any integer $n \geq 0$, let $\mathfrak{o}_n$ denote the discrete valuation ring $\Z_p[\zeta_{Np^n}]$ and let $\mathfrak{o}$ denote $\mathfrak{o}_{0}$. Let $K_n$, resp. $K$, denote the fraction field of $\mathfrak{o}_n$, resp. $\mathfrak{o}$. Let $\mathcal{Y}_H(N)=\mathcal{Y}(N) \times_{\mathfrak{o}} \mathcal{Y}(N)$ and let $\overline{\mathcal{Y}_H(N)}$ denote the canonical smooth compactification of $\mathcal{Y}_H(N)$ such that $\overline{\mathcal{Y}_H(N)}-\mathcal{Y}_H(N)$ is a divisor with relatively normal crossings. In what follows we define the homomorphisms composing the diagrams that we want to state.\\

We start by introducing the maps composing the diagram which we want to prove to be commutative.

\subsubsection{} First we have the Chern character 
$$
K_2\left(\mathcal{Y}_{H}(N) \otimes_{\mathfrak{o}} \mathfrak{o}_n \right) \rightarrow H^2_{\text{et}}(\mathcal{Y}_H(N) \otimes_{\mathfrak{o}} K_n, \Z/p^n\Z(2)),
$$
which we compose with the cup-product
$$
H^2_{\text{et}}(\mathcal{Y}_H(N) \otimes_{\mathfrak{o}} K_n, \Z/p^n\Z(2)) \otimes \mu_{p^n}^{-1} \rightarrow H^2_{\text{et}}(\mathcal{Y}_H(N) \otimes_{\mathfrak{o}} K_n, \Z/p^n\Z(1))
$$
to obtain the map
$$
Ch_n: K_2\left(\mathcal{Y}_{H}(N) \otimes_{\mathfrak{o}} \mathfrak{o}_n \right) \otimes \mu_{p^n}^{-1} \rightarrow H^2_{\text{et}}(\mathcal{Y}_H(N) \otimes_{\mathfrak{o}} K_n, \Z/p^n\Z(1)).
$$

\subsubsection{} \label{K20} Let $H^2_{\text{et}}(\mathcal{Y}_H(N) \otimes_{\mathfrak{o}} K_n, \Z/p^n\Z(1))^0 \subset H^2_{\text{et}}(\mathcal{Y}_H(N) \otimes_{\mathfrak{o}} K_n, \Z/p^n\Z(1))$ denote the classes which are homologically trivial, which are defined as
$$
H^2_{\text{et}}(\mathcal{Y}_H(N) \otimes_{\mathfrak{o}} K_n, \Z/p^n\Z(1))^0= \ker ( H^2_{\text{et}}(\mathcal{Y}_H(N) \otimes_{\mathfrak{o}} K_n, \Z/p^n\Z(1)) 
$$
$$
\rightarrow H^2_{\text{et}}(\mathcal{Y}_H(N) \otimes_{\mathfrak{o}} \overline{K}, \Z/p^n\Z(1))).
$$
Similarly, we denote by 
$$
\left( K_2\left(\mathcal{Y}_{H}(N) \otimes_{\mathfrak{o}} \mathfrak{o}_n \right) \otimes \mu_{p^n}^{-1} \right)^0=Ch_n^{-1}\left( H^2_{\text{et}}(\mathcal{Y}_H(N) \otimes_{\mathfrak{o}} K_n, \Z/p^n\Z(1))^0 \right).
$$
The first edge homomorphism of the Hochschild-Serre spectral sequence is a natural map
$$
H^2_{\text{et}}(\mathcal{Y}_H(N) \otimes_{\mathfrak{o}} K_n, \Z/p^n\Z(1))^0 \rightarrow H^1(K_n, H^1_{\text{et}}(\mathcal{Y}_H(N) \otimes_{\mathfrak{o}} \overline{K}, \Z/p^n\Z(1))).
$$

\subsubsection{} Let $\pi_1 \pmod{p^n}:  H^1_{\text{et}}(\mathcal{Y}_H(N) \otimes_{\mathfrak{o}} \overline{K}, \Z/p^n\Z(1)) \rightarrow \overline{\mathfrak{o}}/p^{n} \otimes_{\mathfrak{o}} H^0\left( \overline{\mathcal{Y}_H(N)},  \Omega^1_{\overline{\mathcal{Y}_H(N)}}(\mathrm{log}) \right)$ be the map defined in \cite{scholl} p. 408 for $Y=\mathcal{Y}_H(N)$. By a slight abuse of notation, we will still denote by $\pi_1 \pmod{p^n}$ the map induced by $\pi_1 \pmod{p^n}$ after applying the functor $M \mapsto H^1(K_n, M)$. 

\subsubsection{}  The logarithm of the cyclotomic character is a continuous homomorphism $$\log \chi_{cyc} \in \mrm{Hom}(\mrm{Gal}(\overline{K}/\Q_p), \Z_p)=H^1(\Q_p, \Z_p).$$
As $K_n=\Q_p(\zeta_{Np^n})$ contains the $p^n$-th roots of unity, for any $\sigma \in \mrm{Gal}(\overline{K}/K_n)$ we have $\chi_{cyc}(\sigma) \equiv 1 \pmod{p^n}$ and so $\log \chi_{cyc}(\sigma) \equiv 0 \pmod{p^n}$. As a consequence, the cup-product with $1/p^{n-1}\log \chi_{cyc}$ is a homomorphism
$
\cup (1/p^{n-1}\log \chi_{cyc}): \mathfrak{o}_n=H^0(K_n, \widehat{\overline{\mathfrak{o}}}) \rightarrow H^1(K_n, \widehat{\overline{\mathfrak{o}}}).
$
By a slight abuse of notation, we still denote by
\begin{equation*} \label{cup-log-cyclo1}
\cup (1/p^{n-1}\log \chi_{cyc}): \mathfrak{o}_n/p^{n-1} \rightarrow H^1(K_n, \overline{\mathfrak{o}}/p^{n-1})
\end{equation*}
the homomorphism induced by the one above by reduction modulo $p^{n-1}$.

\subsubsection{} We let
$$
d\log: K_2(\mathcal{Y}_H(N) \otimes_{\mathfrak{o}} \mathfrak{o}_n) \otimes \mu_{p^n}^{-1} \rightarrow H^0 \left( \overline{\mathcal{Y}_H(N)} \otimes_{\mathfrak{o}} \mathfrak{o}_n, \Omega^2_{\overline{\mathcal{Y}_H(N)}\otimes_{\mathfrak{o}} \mathfrak{o}_n}(\log) \right)(-1)
$$
be the homomorphism defined in at p. 393 of \cite{scholl} (see also Remark (iv) p. 411 of \cite{scholl}). We note that this homomorphism satisfies $d\log\{u,v\}=d\log(u)\wedge d\log(v)$ for $u, v \in \mathcal{O}(\mathcal{Y}_H(N) \otimes \mathfrak{o}_n)^\times$.

\subsubsection{} Finally, we let 
$$
pr: H^0 \left( \overline{\mathcal{Y}_H(N)} \otimes_{\mathfrak{o}} \mathfrak{o}_n, \Omega^2_{\overline{\mathcal{Y}_H(N)}\otimes_{\mathfrak{o}} \mathfrak{o}_n}(\log) \right)(-1) \rightarrow \Omega^1_{\mathfrak{o}_n/\mathfrak{o}}(-1) \otimes_{\mathfrak{o}} \mrm{Fil}^1H^1_{dR}(\mathcal{Y}_H(N))
$$
be defined as follows. We have $\mrm{Fil}^1H^1_{dR}(\mathcal{Y}_H(N))=H^0(\overline{\mathcal{Y}_H(N)}, \Omega^1_{\overline{\mathcal{Y}_H(N)}}(\log))$ and we have a canonical isomorphism of sheaves of $\mathfrak{o}_n \otimes \mathcal{O}_{\overline{\mathcal{Y}_H(N)}}$-modules
$$
\Omega^1_{\overline{\mathcal{Y}_H(N)}\otimes_{\mathfrak{o}} \mathfrak{o}_n}(\log)) \simeq (\Omega^1_{\mathfrak{o}_n/\mathfrak{o}} \otimes_{\mathfrak{o}} \mathcal{O}_{\overline{\mathcal{Y}_H(N)}}) \oplus \left( \mathfrak{o}_n \otimes_{\mathfrak{o}} \Omega^1_{\overline{\mathcal{Y}_H(N)}}(\log) \right)
$$ as $\overline{\mathcal{Y}_H(N)}/\mathfrak{o}$ is smooth. By taking the second exterior power, we obtain a canonical projection
$
\Omega^2_{\overline{\mathcal{Y}_H(N)}\otimes \mathfrak{o}_n}(\log) \rightarrow (\Omega^1_{\mathfrak{o}_n/\mathfrak{o}} \otimes_{\mathfrak{o}} \mathcal{O}_{\overline{\mathcal{Y}_H(N)}}) \otimes_{\mathcal{O}_{\overline{\mathcal{Y}_H(N)}}} \Omega^1_{\overline{\mathcal{Y}_H(N)}}(\log)
$
which induces the homomorphism $pr$ on global sections.

\begin{thm} \label{cdfinite} There exists an integer $c \geq 0$ such that the diagram
$$
\begin{tikzcd}
\left( K_2\left(\mathcal{Y}_{H}(N) \otimes_{\mathfrak{o}} \mathfrak{o}_n \right) \otimes \mu_{p^n}^{-1} \right)^0 \ar[r, "Ch_n"] \ar[d, "d\log" left]& H^2_{\text{et}}(\mathcal{Y}_H(N) \otimes_{\mathfrak{o}} K_n, \Z/p^n\Z(1))^0 \ar[d, "Hochschild-Serre"]\\
H^0\left(\overline{\mathcal{Y}_H(N)} \otimes_{\mathfrak{o}} \mathfrak{o}_n, \Omega^2_{\overline{\mathcal{Y}_H(N)} \otimes_{\mathfrak{o}} \mathfrak{o}_n}(\mathrm{log})\right)(-1)  \ar[d, "pr" left]&  H^1(K_n, H^1_{\text{et}}(\mathcal{Y}_H(N) \otimes_{\mathfrak{o}} \overline{K}, \Z/p^n\Z(1))) \ar[d, "\pi_1 \pmod{p^{n-1}}"]\\
\Omega^1_{\mathfrak{o}_n/\mathfrak{o}}(-1) \otimes \mathrm{Fil}^1H^1_{dR}(\mathcal{Y}_H(N)/\mathfrak{o}) \ar[d, "d \log \zeta_{p^n} \otimes \langle \zeta_{p^n} \rangle^{-1} \mapsto 1" left] &  H^1(K_n, \overline{\mathfrak{o}}/p^{n-1}) \otimes_{\mathfrak{o}} \mathrm{Fil}^1H^1_{dR}(\mathcal{Y}_H(N)/\mathfrak{o}) \\
\mathfrak{o}_n/\mathfrak{d}_n \otimes_{\mathfrak{o}} \mathrm{Fil}^1H^1_{dR}(\mathcal{Y}_H(N)/\mathfrak{o})  \ar[r]& \mathfrak{o}_n/p^{n-1} \otimes_{\mathfrak{o}} \mathrm{Fil}^1H^1_{dR}(\mathcal{Y}_H(N)/\mathfrak{o}) \ar[u, "\cup \frac{1}{p^n}\log \chi_{cyc}" right]
\end{tikzcd}
$$
commutes up to $p^c$-torsion.
\end{thm}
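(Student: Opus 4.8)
The plan is to deduce this two-variable statement from Scholl's one-variable reciprocity law \cite[Corollary 3.2.4]{scholl} by exploiting the product structure $\mathcal{Y}_H(N) = \mathcal{Y}(N) \times_{\mathfrak{o}} \mathcal{Y}(N)$ together with the Künneth formula, the genuinely new work being the passage from a curve to a surface and from a smooth boundary to a normal-crossings one. First I would write $p_1, p_2 \colon \mathcal{Y}_H(N) \to \mathcal{Y}(N)$ for the two projections and record the Künneth decompositions
$$H^1_{\text{et}}(\mathcal{Y}_H(N) \otimes_{\mathfrak{o}} \overline{K}, \Z/p^n\Z(1)) \cong \bigoplus_{i=1,2} p_i^* H^1_{\text{et}}(\mathcal{Y}(N) \otimes_{\mathfrak{o}} \overline{K}, \Z/p^n\Z(1))$$
and $\mrm{Fil}^1 H^1_{dR}(\mathcal{Y}_H(N)) \cong \bigoplus_{i=1,2} p_i^* \mrm{Fil}^1 H^1_{dR}(\mathcal{Y}(N))$, the latter because a global logarithmic $1$-form on a product of compactified curves is a sum of pullbacks from the two factors. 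The three linear maps of the diagram that carry $H^1$ as coefficients --- the Hochschild--Serre edge map, the projection $\pi_1 \pmod{p^{n-1}}$, and the cup product with $\frac{1}{p^n}\log\chi_{cyc}$ --- are functorial in these coefficients and therefore respect the decompositions.

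Next I would analyse the projection $pr$, which is the filter making the reduction possible. Under the decomposition $\Omega^1_{\overline{\mathcal{Y}_H(N)} \otimes \mathfrak{o}_n}(\log) \cong (\Omega^1_{\mathfrak{o}_n/\mathfrak{o}} \otimes \mathcal{O}) \oplus (\mathfrak{o}_n \otimes \Omega^1_{\overline{\mathcal{Y}_H(N)}}(\log))$, the second exterior power splits into an arithmetic-times-geometric part and a purely geometric part $\Omega^2_{\overline{\mathcal{Y}_H(N)}}(\log)$, and $pr$ retains only the former (the arithmetic wedge-square vanishes). Using the multiplicativity $d\log\{u,v\} = d\log(u) \wedge d\log(v)$ together with the cup-product multiplicativity of the étale Chern class $Ch_n$, I would show that the only symbols contributing after $pr$ are those pairing a geometric unit against a unit of $\mathfrak{o}_n$ (the cyclotomic direction, uniform across the two factors), and that the surviving contribution decomposes over the two summands of $\mrm{Fil}^1 H^1_{dR}(\mathcal{Y}_H(N))$. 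On each summand both composites around the diagram are then identified with the corresponding one-variable maps of \cite{scholl} for $\mathcal{Y}(N)$, so that the commutativity of Theorem \ref{cdfinite} reduces, summand by summand, to \cite[Corollary 3.2.4]{scholl}; one must also check that the purely geometric symbols, killed on the left by $pr$, have vanishing image up to bounded torsion along the right-hand composite, because their Hochschild--Serre classes lie in the part of $H^1(K_n, H^1_{\text{et}})$ on which $\pi_1$ followed by the pairing with $\log\chi_{cyc}$ vanishes.

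Two points will carry the technical weight. Because $\overline{\mathcal{Y}_H(N)}$ has a normal-crossings rather than smooth boundary, I must verify that the logarithmic de Rham and $p$-adic comparison constructions underlying Scholl's computation of $\pi_1$ and of the Bloch--Kato dual exponential --- the Hodge--Tate/Faltings comparison and the fundamental exact sequence of $p$-adic Hodge theory --- remain valid in this log-smooth setting and remain compatible with the Künneth decomposition and with external products. Since the boundary of the product is $\overline{\mathcal{Y}(N)} \times (\text{cusps})$ together with $(\text{cusps}) \times \overline{\mathcal{Y}(N)}$, this should follow from the smooth-divisor case on each factor, but controlling the mixed logarithmic forms is where the argument is least formal. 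Secondly, the comparison isomorphisms, the discrepancy between the different $\mathfrak{d}_n$ and the ideal $p^{n-1}$ (both of $p$-adic valuation bounded relative to $n$), and the integrality of the regulators each introduce bounded denominators; collecting them yields a single constant $c$, independent of $n$, accounting for the $p^c$-torsion ambiguity in the statement.

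I expect the main obstacle to be exactly the interplay described above: proving that the two-variable étale regulator $Ch_n$, once passed through Hochschild--Serre and $\pi_1$, is governed summand by summand by the one-variable dual exponential, with no residual contribution surviving from the genuinely two-dimensional cohomology $H^2_{\text{et}}(\mathcal{Y}_H(N) \otimes_{\mathfrak{o}} \overline{K})$ inside the homologically trivial subspace, all while the comparison machinery is carried over to the normal-crossings boundary. Controlling this is the heart of the extension from a single modular curve to a product of two.
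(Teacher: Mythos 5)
There is a genuine gap, and your route diverges from the paper's at the decisive step. The paper does not reduce Theorem \ref{cdfinite} to Scholl's one-variable statement \cite[Corollary 3.2.4]{scholl} by a K\"unneth decomposition. Instead it localizes at the generic point $\eta$ of the special fibre of $\overline{\mathcal{Y}_H(N)}$, observes that $\mrm{Fil}^1H^1_{dR}(\mathcal{Y}_H(N))\hookrightarrow \widehat{\Omega}^1_{\mathcal{O}/\mathfrak{o}}$ with torsion-free cokernel, and thereby reduces everything to Theorem \ref{main22}, the analogue of the diagram over the two-dimensional big local field $\mathcal{K}_n$ (residue field with $[l:l^p]=p^2$). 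The proof of Theorem \ref{main22} is a rerun of Scholl's \emph{argument} (his Lemmas 3.5.2, 3.5.3(ii), 3.3.8, 3.5.4 go through for any big local field), and the genuinely new input is Proposition \ref{hyodo-tate}: the kernel and cokernel of $\delta_{\mathcal{K}_\infty/\mathcal{K}}$ are killed by a bounded power of $p$. That in turn rests on the computation of $H^1\left(\mathcal{M}_\infty/\mathcal{K}_\infty,\widehat{\mathcal{B}_\infty}(1)\right)$ for the rank-two Kummer tower $\mathcal{M}_\infty=\mathcal{K}_\infty\left(T_1^{p^{-\infty}},T_2^{p^{-\infty}}\right)$ with $\mrm{Gal}(\mathcal{M}_\infty/\mathcal{K}_\infty)\simeq\Z_p(1)^2$ (Lemmas \ref{colmez} and \ref{faltings}), where the nontrivial isotypic components are killed by $(\zeta_p-1)^2$ rather than $\zeta_p-1$. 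This two-variable Tate--Faltings computation is the heart of the paper and is entirely absent from your proposal; by contrast, the normal-crossings boundary, which you single out as the main technical burden, is essentially harmless here.

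The concrete gap in your reduction is at the source of the diagram. The group $K_2\left(\mathcal{Y}_H(N)\otimes_{\mathfrak{o}}\mathfrak{o}_n\right)$ does not decompose along the K\"unneth summands of $H^1_{\text{\'et}}$ or of $\mrm{Fil}^1H^1_{dR}$, so the composites from it into each summand are not instances of the maps in \cite[Corollary 3.2.4]{scholl}, whose source is $K_2$ of a curve over $\mathfrak{o}_n$; there is simply no one-variable statement to invoke for a general symbol $\{u,v\}$ with $u,v\in\mathcal{O}(\mathcal{Y}_H(N)\otimes\mathfrak{o}_n)^\times$ not pulled back from the factors. Moreover your filtering claim --- that after $pr$ only symbols pairing a ``geometric'' unit against a unit of $\mathfrak{o}_n$ survive --- is false: writing $d\log u=\alpha_u+\beta_u$ for the arithmetic and geometric components under $\Omega^1_{\overline{\mathcal{Y}_H(N)}\otimes\mathfrak{o}_n}(\log)\simeq(\Omega^1_{\mathfrak{o}_n/\mathfrak{o}}\otimes\mathcal{O})\oplus(\mathfrak{o}_n\otimes\Omega^1_{\overline{\mathcal{Y}_H(N)}}(\log))$, one has $pr(d\log u\wedge d\log v)=\alpha_u\otimes\beta_v-\alpha_v\otimes\beta_u$, which is generically nonzero for units with nontrivial arithmetic differential and does not factor through either curve. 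Even for the decomposable symbols $\{p_1^*u_1,p_2^*u_2\}$ relevant to the Euler system, the Hochschild--Serre image of $p_1^*c_1(u_1)\cup p_2^*c_1(u_2)$ in $H^1(K_n,H^1_{\text{\'et}})$ mixes Galois cohomology degrees across the two factors and is not the sum of two one-variable regulators. To repair the argument you would have to control the regulator of arbitrary symbols locally at $\eta$, which is exactly the big-local-field analysis of Sections \ref{prelimkahler}--2.4 of the paper; I would redirect your effort there, in particular to the cocycle computation of Lemma \ref{colmez} and the $(\zeta_p-1)^2$ bound of Lemma \ref{faltings}.
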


To state a first Corollary of Theorem \ref{cdfinite} we need to explain the compatibilities with respect to the traces of the maps composing the diagram of Theorem \ref{cdfinite}.

\subsubsection{} According to \cite{scholl} p. 393 (iv), the map $d\log$ is compatible with the trace maps on the source and on the target and hence induces a map 
\begin{equation} \label{dlog1}
d\log: \underleftarrow{\lim}_n K_2(\mathcal{Y}_H(N) \otimes_{\mathfrak{o}} \mathfrak{o}_n) \otimes \mu_{p^n}^{-1} \rightarrow  \underleftarrow{\lim}_n H^0 \left( \overline{\mathcal{Y}_H(N)} \otimes_{\mathfrak{o}} \mathfrak{o}_n, \Omega^2_{\overline{\mathcal{Y}_H(N)}\otimes \mathfrak{o}_n}(\log) \right)(-1)
\end{equation}
denoted in the same way by a slight abuse of notation.

\subsubsection{} The relative different $\mathfrak{d}_n$ of the extension $\mathfrak{o}_n/\mathfrak{o}$ is an ideal of $\mathfrak{o}_n$ such that the homomorphism of $\mathfrak{o}_n$-modules $\mathfrak{o}_n(1) \rightarrow \Omega^1_{\mathfrak{o}_n/\mathfrak{o}}$ defined by $1 \otimes \zeta_{p^n} \mapsto d\log \zeta_{p^n}$ induces an isomorphism 
$
\mathfrak{o}_n/\mathfrak{d}_n(1) \simeq \Omega^1_{\mathfrak{o}_n/\mathfrak{o}}.
$
According to \cite[Proposition 3.3.12]{scholl}, the diagram
$$
\begin{tikzcd}
\mathfrak{o}_n/\mathfrak{d}_n(1) \ar[r, "\sim"] \ar[d, "t_{n,m}"]& \Omega^1_{\mathfrak{o}_n/\mathfrak{o}} \ar[d, "tr_{n,m}"]\\
\mathfrak{o}_m/\mathfrak{d}_m(1) \ar[r, "\sim"] & \Omega^1_{\mathfrak{o}_m/\mathfrak{o}}
\end{tikzcd}
$$
where $t_{n,m}=\frac{1}{p^{n-m}}tr_{n,m}$ and $tr_{n,m}$ is the trace map is commutative. As a consequence, we obtain the map 
\begin{equation} \label{different1}
\underleftarrow{\lim}_n \left( \Omega^1_{\mathfrak{o}_n/\mathfrak{o}}(-1) \otimes_{\mathfrak{o}} \mrm{Fil}^1 H^1_{dR}(\mathcal{Y}_H(N)) \right) \rightarrow \underleftarrow{\lim}_n \left( \mathfrak{o}_n/\mathfrak{d}_n \otimes_{\mathfrak{o}} \mrm{Fil}^1 H^1_{dR}(\mathcal{Y}_H(N)) \right),
\end{equation}
which is an isomorphism.

\subsubsection{} 

\begin{lem} \label{different-generator} For any integer $n \geq 1$, the relative different $\mathfrak{d}_n$ of $\mathfrak{o}_n/\mathfrak{o}$ is the ideal of $\mathfrak{o}_n$ generated by $p^n(\zeta_p-1)^{-1}$.
\end{lem}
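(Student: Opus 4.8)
The plan is to reduce the computation of the relative different to a standard differentiation of the cyclotomic polynomial. First I would observe that, since $N$ is prime to $p$, the extension $K = \mathbb{Q}_p(\zeta_N)$ of $\mathbb{Q}_p$ is unramified, so $\mathfrak{o} = \mathbb{Z}_p[\zeta_N]$ is its ring of integers and $K_n = K(\zeta_{p^n})$ is totally ramified over $K$; being linearly disjoint from the totally ramified $\mathbb{Q}_p(\zeta_{p^n})/\mathbb{Q}_p$, it has degree $\varphi(p^n) = p^{n-1}(p-1)$, so $\Phi_{p^n}$ stays irreducible over $K$ and $\zeta_{p^n}-1$ is a uniformizer with Eisenstein minimal polynomial $\Phi_{p^n}(X+1)$. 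Consequently $\mathfrak{o}_n = \mathfrak{o}[\zeta_{p^n}] = \mathfrak{o}[\zeta_{p^n}-1]$ is monogenic over $\mathfrak{o}$, and the different is given by the standard formula $\mathfrak{d}_n = \left( \Phi_{p^n}'(\zeta_{p^n}) \right)$, the ideal generated by the derivative of the minimal polynomial evaluated at the generator.

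The second step is the explicit evaluation of $\Phi_{p^n}'(\zeta_{p^n})$. I would start from the identity $\Phi_{p^n}(X)\,(X^{p^{n-1}} - 1) = X^{p^n} - 1$, differentiate it, and specialize at $X = \zeta_{p^n}$, where $\Phi_{p^n}(\zeta_{p^n}) = 0$. Since $\zeta_{p^n}^{p^{n-1}} = \zeta_p$ and $\zeta_{p^n}^{p^n - 1} = \zeta_{p^n}^{-1}$ is a unit, this yields $\Phi_{p^n}'(\zeta_{p^n})\,(\zeta_p - 1) = p^n\,\zeta_{p^n}^{-1}$, hence $\Phi_{p^n}'(\zeta_{p^n}) = p^n\,\zeta_{p^n}^{-1}\,(\zeta_p - 1)^{-1}$. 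As $\zeta_{p^n}^{-1}$ is a unit of $\mathfrak{o}_n$, the ideal it generates is unchanged by dropping this factor, giving $\mathfrak{d}_n = \left( p^n (\zeta_p - 1)^{-1} \right)$, which is the claim.

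As a consistency check, and to make sure $p^n(\zeta_p-1)^{-1}$ actually lies in $\mathfrak{o}_n$, I would record the valuations: normalizing $v_n$ so that $v_n(\zeta_{p^n}-1) = 1$, one has $v_n(p) = p^{n-1}(p-1)$ by total ramification and $v_n(\zeta_p - 1) = p^{n-1}$, using $\prod_{a}(1 - \zeta_p^a) = \Phi_p(1) = p$ together with the fact that the $p-1$ factors are mutually associate. Thus $v_n\!\left(p^n(\zeta_p-1)^{-1}\right) = p^{n-1}(n(p-1) - 1) \geq 0$ for $p > 2$ and $n \geq 1$, and this recovers the known different exponent of $\mathbb{Q}_p(\zeta_{p^n})/\mathbb{Q}_p$, in agreement with $\mathfrak{d}_{K/\mathbb{Q}_p} = \mathfrak{o}$ and the tower formula for differents.

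The only genuinely non-formal point is the first step: the assertion that $\mathfrak{o}_n = \mathfrak{o}[\zeta_{p^n}]$ is the full ring of integers and is monogenic over $\mathfrak{o}$, which is what licenses the formula $\mathfrak{d}_n = (\Phi_{p^n}'(\zeta_{p^n}))$. I would obtain this from the general theorem that a totally ramified extension generated by a root of an Eisenstein polynomial has monogenic ring of integers, applied to the uniformizer $\zeta_{p^n} - 1$ over the unramified base $\mathfrak{o}$; everything after that is the routine differentiation above.
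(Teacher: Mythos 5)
Your proof is correct and follows essentially the same route as the paper: both identify $\mathfrak{o}_n=\mathfrak{o}[\zeta_{p^n}]$ as monogenic over $\mathfrak{o}$ with minimal polynomial $\Phi_{p^n}(X)=\frac{X^{p^n}-1}{X^{p^{n-1}}-1}$, invoke the standard formula $\mathfrak{d}_n=(f'(\zeta_{p^n}))$ (Serre, \emph{Corps locaux}, III \S 6, Corollaire 2), and compute the derivative by differentiating $f(X)(X^{p^{n-1}}-1)=X^{p^n}-1$, discarding a unit factor at the end. Your extra justifications (Eisenstein/total ramification for monogenicity, the valuation consistency check) are sound but not needed beyond what the paper records; incidentally your value $f'(\zeta_{p^n})=p^n\zeta_{p^n}^{-1}(\zeta_p-1)^{-1}$ is the correct one, the paper's unit factor $\zeta_p$ being a harmless slip.
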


\begin{proof} As $N$ is prime to $p$, we have $\mathfrak{o}_n=\mathfrak{o}[\zeta_{p^n}]$ and the minimal polynomial of $\zeta_{p^n}$ over $\mathfrak{o}$ is $f(X)=\frac{X^{p^n}-1}{X^{p^{n-1}}-1}$. Hence, according to \cite[\S6 Corollaire 2]{serre}, the ideal $\mathfrak{d}_n$ is generated by $f'(\zeta_{p^n})=p^n\zeta_p(\zeta_p-1)^{-1}$. This implies the statement because $\zeta_p \in \mathfrak{o}_n^\times$.
\end{proof}

For any integers $n \geq m \geq 1$ we have the trace homomorphisms $\mathfrak{o}_n \rightarrow \mathfrak{o}_m$ which induce the homomorphism
$
tr_{n,m}: \mathfrak{o}_n/\mathfrak{d}_n = \mathfrak{o}_n/p^n(\zeta_p-1)^{-1} \rightarrow \mathfrak{o}_m/p^n(\zeta_p-1)^{-1}.
$
over $\mathfrak{o}$ with $\mrm{Fil}^1 H^1_{dR}(\mathcal{Y}_H(N))$ and taking the inverse limit we obtain
\begin{equation} \label{trace}
\left( \frac{1}{p^{n}} tr_{n,m}\right)_{n \geq m}: \underleftarrow{\lim}_n \left( \mathfrak{o}_n/\mathfrak{d}_n \otimes_{\mathfrak{o}} \mrm{Fil}^1 H^1_{dR}(\mathcal{Y}_H(N))\right) \rightarrow K_m \otimes_{\mathfrak{o}} \mrm{Fil}^1 H^1_{dR}(\mathcal{Y}_H(N)).
\end{equation}

By taking the inverse limits over $n$ in the diagram of Theorem \ref{cdfinite} and composing with the trace to $K_m$ we obtain the following, where we denote $\pi_1=\underleftarrow{\lim}_n\left( \pi_1 \pmod{p^n} \right)$.

\begin{cor} \label{corollary11} There exists an integer $c \geq 0$ such that the diagram
$$
\begin{tikzcd}
\underleftarrow{\lim}_n \left( K_2\left(\mathcal{Y}_{H}(N) \otimes_{\mathfrak{o}} \mathfrak{o}_n \right) \otimes \mu_{p^n}^{-1} \right)^0 \ar[r, "(Tr_{m,n} \circ Ch_n)_{n \geq 1}"] \ar[d, "d\log" left]& H^2_{\text{et}}(\mathcal{Y}_H(N) \otimes_{\mathfrak{o}} K_m, \Z_p(1))^0 \ar[d, "Hochschild-Serre"]\\
\underleftarrow{\lim}_n H^0\left(\overline{\mathcal{Y}_H(N)} \otimes_{\mathfrak{o}} \mathfrak{o}_n, \Omega^2_{\overline{\mathcal{Y}_H(N)} \otimes_{\mathfrak{o}} \mathfrak{o}_n}(\mathrm{log})\right)(-1)  \ar[d, "pr" left]&  H^1(K_m, H^1_{\text{et}}(\mathcal{Y}_H(N) \otimes_{\mathfrak{o}} \overline{K}, \Z_p(1))) \ar[d, "\pi_1"]\\
\underleftarrow{\lim}_n \left( \Omega^1_{\mathfrak{o}_n/\mathfrak{o}}(-1) \otimes_{\mathfrak{o}} \mathrm{Fil}^1H^1_{dR}(\mathcal{Y}_H(N)/\mathfrak{o}) \right) \ar[d, "d \log \zeta_{p^n} \otimes \langle \zeta_{p^n} \rangle^{-1} \mapsto 1" left] &  H^1\left(K_m, \widehat{\overline{\mathfrak{o}}} \right) \otimes_{\mathfrak{o}} \mathrm{Fil}^1H^1_{dR}(\mathcal{Y}_H(N)/\mathfrak{o}) \\
\underleftarrow{\lim}_n \left( \mathfrak{o}_n/\mathfrak{d}_n \otimes_{\mathfrak{o}} \mathrm{Fil}^1H^1_{dR}(\mathcal{Y}_H(N)/\mathfrak{o}) \right) \ar[r, "(1/p^{n-m}tr_{n,m})_{n \geq m}"]& \mathfrak{o}_m \otimes_{\mathfrak{o}} \mathrm{Fil}^1H^1_{dR}(\mathcal{Y}_H(N)/\mathfrak{o}) \ar[u, "\cup \frac{1}{p^m}\log \chi_{cyc}" right]
\end{tikzcd}
$$
commutes up to $p^c$-torsion.
\end{cor}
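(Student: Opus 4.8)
The plan is to deduce the corollary directly from Theorem \ref{cdfinite}, exactly as the sentence preceding the statement indicates: pass to the inverse limit over $n$ in the finite-level diagram and compose the right-hand column with the trace (corestriction) maps down to level $m$. The whole argument rests on two ingredients. First, every arrow appearing in Theorem \ref{cdfinite} must be compatible with the transition maps of the relevant projective systems, so that the finite-level objects and maps assemble into the objects and maps of the corollary; note that the right-hand column objects, such as $H^2_{\text{et}}(\mathcal{Y}_H(N) \otimes_{\mathfrak{o}} K_m, \Z_p(1))^0$ and $H^1(K_m, \widehat{\overline{\mathfrak{o}}}) \otimes_{\mathfrak{o}} \mathrm{Fil}^1 H^1_{dR}$, arise as projective limits in the coefficient $p^n$ of the corresponding level-$m$ objects with $\Z/p^n$-coefficients. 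Second, since the constant $c$ in Theorem \ref{cdfinite} is uniform in $n$, a compatible system of elements annihilated by $p^c$ has a limit annihilated by $p^c$, so that commutativity up to $p^c$-torsion is inherited by the limit diagram.

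For the left column I would argue as follows. The compatibility of $d\log$ with the trace maps is already recorded in \eqref{dlog1}, so the top-left vertical passes to the limit. For the two lower-left verticals I would use the isomorphism $\mathfrak{o}_n/\mathfrak{d}_n(1) \simeq \Omega^1_{\mathfrak{o}_n/\mathfrak{o}}$ together with \cite[Proposition 3.3.12]{scholl}, which identifies the differential trace $tr_{n,m}$ with the renormalized trace $t_{n,m} = \frac{1}{p^{n-m}} tr_{n,m}$ on the different quotients; this is precisely the content of the isomorphism \eqref{different1}, and it shows that both the projection $pr$ and the bottom-left identification $d\log \zeta_{p^n} \otimes \langle \zeta_{p^n}\rangle^{-1} \mapsto 1$ are compatible with the transition maps. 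Here Lemma \ref{different-generator}, giving $\mathfrak{d}_n = (p^n(\zeta_p-1)^{-1})$, is what makes the quotients $\mathfrak{o}_n/\mathfrak{d}_n$ explicit and allows me to describe the transition maps $tr_{n,m}$ on them concretely.

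For the right column, the Hochschild--Serre edge map and the map $\pi_1$ are functorial in the base field and hence commute with corestriction; since the étale trace on $H^2_{\text{et}}$ is corestriction in Galois cohomology, these upper-right verticals are compatible with the traces defining the top horizontal map $(Tr_{n,m} \circ Ch_n)_{n\geq 1}$, whose well-definedness in turn expresses the trace-compatibility of the Chern character. The delicate arrow is the cup product with $\log \chi_{cyc}$: since $\log \chi_{cyc}$ is the restriction to $\mathrm{Gal}(\overline{K}/K_n)$ of a class defined over $\Q_p$, its interaction with corestriction is governed by the projection formula $\mathrm{cor}(x \cup \mathrm{res}(\xi)) = \mathrm{cor}(x) \cup \xi$, which I would use to track how the normalizing power of $p$ is transported under the trace.

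The main obstacle, and the real bookkeeping at the heart of the argument, is reconciling these normalizing factors. At level $n$ the cup product is normalized by $\frac{1}{p^n}$, whereas in the limit diagram the cup product on the bottom right carries $\frac{1}{p^m}$ and the bottom horizontal trace carries $\frac{1}{p^{n-m}}$. The argument hinges on the factorization $\frac{1}{p^n} = \frac{1}{p^{n-m}} \cdot \frac{1}{p^m}$, which, in combination with \cite[Proposition 3.3.12]{scholl}, shows that the finite-level normalization is distributed precisely between the trace map and the cup product after passing to the limit. Assembling the finite-level diagrams, which commute up to the uniform $p^c$-torsion, along these trace-compatible arrows then yields the commutativity up to $p^c$-torsion asserted in the corollary.
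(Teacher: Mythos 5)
Your proposal is correct and follows essentially the same route as the paper, which proves the corollary simply by invoking the trace-compatibilities it has just recorded (the compatibility of $d\log$ with traces, the identification of $\Omega^1_{\mathfrak{o}_n/\mathfrak{o}}$ with $\mathfrak{o}_n/\mathfrak{d}_n(1)$ via \cite[Proposition 3.3.12]{scholl}, and the normalized trace maps) and then passing to the inverse limit over $n$ in the diagram of Theorem \ref{cdfinite} and corestricting to $K_m$. Your additional bookkeeping of the factorization $\frac{1}{p^n}=\frac{1}{p^{n-m}}\cdot\frac{1}{p^m}$ via the projection formula, and the observation that the uniform constant $c$ survives the limit, are exactly the implicit content of the paper's one-sentence argument.
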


\subsubsection{} Let
\begin{equation} \label{exp*}
\mrm{exp}^*: H^1\left(K_m, H^1_{\text{\'et}}(\mathcal{Y}_H(N) \otimes_{\mathfrak{o}} \overline{K}, \Q_p(1) \right) \rightarrow K_m \otimes_{\mathfrak{o}} \mrm{Fil}^1 H^1_{dR}(\mathcal{Y}_H(N))
\end{equation}
be the dual exponential map \cite{bloch-kato}, \cite{kato91} 1.2.4. To recall its definition, we need to introduce the ring $B_{dR}^+$ defined by Fontaine, a topological discrete valuation ring which is a $\Q_p$-algebra endowed with a continuous action of $\mrm{Gal}(\overline{\Q}_p/\Q_p)$. The map $\mrm{exp^*}$ is defined as the composite of the canonical map
$$
H^1\left(K_m, H^1_{\text{\'et}}(\mathcal{Y}_H(N) \otimes_{\mathfrak{o}} \overline{K}, \Q_p(1) )\right) \rightarrow H^1\left(K_m, B_{dR}^+ \otimes_{\Q_p} H^1_{\text{\'et}}(\mathcal{Y}_H(N) \otimes_{\mathfrak{o}} \overline{K}, \Q_p(1))  \right),
$$
and of the inverse of the cup-product with the logarithm of the cyclotomic character, which is an isomorphism
$$
H^1(K_m, B_{dR}^+ \otimes_{\Q_p} H^1_{\text{\'et}}(\mathcal{Y}_H(N) \otimes_{\mathfrak{o}} \overline{K}, \Q_p(1))) \overset{\sim}{\rightarrow} H^0(K_m, B_{dR}^+ \otimes_{\Q_p} H^1_{\text{\'et}}(\mathcal{Y}_H(N) \otimes_{\mathfrak{o}} \overline{K}, \Q_p(1)))
$$
and of the canonical isomorphisms
$$
H^0(K_m, B_{dR}^+ \otimes_{\Q_p} H^1_{\text{\'et}}(\mathcal{Y}_H(N) \otimes_{\mathfrak{o}} \overline{K}, \Q_p(1)))=K_m \otimes_{\Q_p} \mrm{Fil}^0 D_{dR}(H^1_{\text{\'et}}(\mathcal{Y}_H(N) \otimes_{\mathfrak{o}} \overline{K}, \Q_p(1)))$$
$$
= K_m \otimes_{\mathfrak{o}} \mrm{Fil}^1 H^1_{dR}(\mathcal{Y}_H(N)). 
$$
The homomorphisms defined above compose the following diagram.

\begin{cor} \label{main'12} The diagram 
$$
\begin{tikzcd}
\left( \underleftarrow{\lim}_n K_2\left(\mathcal{Y}_H(N)\otimes_{\mathfrak{o}} \mathfrak{o}_n \right) \otimes \mu_{p^n}^{-1} \right)^0 \ar[r, "(Tr_{n,m} \circ Ch_n)_{n \geq 1}"] \ar[d, "d\log" left]& H^2_{\text{\'et}} \left( \mathcal{Y}_H(N) \otimes_{\mathfrak{o}} K_m, \Z_p(1) \right)^0 \ar[d, "Hochschild-Serre"]\\
\underleftarrow{\lim}_n H^0\left(\overline{\mathcal{Y}_H(N)} \otimes_{\mathfrak{o}} \mathfrak{o}_n, \Omega^2_{\overline{\mathcal{Y}_H(N)} \otimes_{\mathfrak{o}} \mathfrak{o}_n}(\log) \right)(-1) \ar[d, "pr" left] & H^1\left(K_m, H^1_{\text{\'et}}(\mathcal{Y}_H(N) \otimes_{\mathfrak{o}} \overline{K}, \Z_p(1)) \right) \ar[d]\\
\underleftarrow{\lim}_n \left( \Omega^1_{\mathfrak{o}_n/\mathfrak{o}}(-1) \otimes_{\mathfrak{o}} \mrm{Fil}^1 H^1_{dR}(\mathcal{Y}_H(N)) \right) \ar[d, "d \log \zeta_{p^n} \otimes \langle \zeta_{p^n} \rangle^{-1} \mapsto 1" left] & H^1\left(K_m, H^1_{\text{\'et}}(\mathcal{Y}_H(N) \otimes_{\mathfrak{o}} \overline{K}, \Q_p(1)) \right) \ar[d, "\mrm{exp}^*"]\\
\underleftarrow{\lim}_n \left( \mathfrak{o}_n/\mathfrak{d}_n \otimes_{\mathfrak{o}} \mrm{Fil}^1 H^1_{dR}(\mathcal{Y}_H(N)) \right) \ar[r, "(1/p^n tr_{n,m})_{n \geq m}"] & K_m \otimes_{\mathfrak{o}} \mrm{Fil}^1 H^1_{dR}(\mathcal{Y}_H(N)).
\end{tikzcd}
$$
is commutative.
\end{cor}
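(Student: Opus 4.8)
The plan is to derive this statement from Corollary \ref{corollary11} in two moves: invert $p$ to remove the $p^c$-torsion, and then reinterpret the lower right corner of that diagram through the Bloch--Kato dual exponential. First I would tensor the whole diagram of Corollary \ref{corollary11} with $\Q_p$ over $\Z_p$. Since that diagram commutes only up to $p^c$-torsion for a fixed $c \geq 0$ and every object becomes a $\Q_p$-vector space after tensoring, the torsion ambiguity is annihilated and the tensored diagram commutes on the nose. The left column, the top map $(Tr_{n,m}\circ Ch_n)_{n\geq 1}$ and the Hochschild--Serre edge map are literally the same as in the present statement, so the only thing left is to match the lower right portion.

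The heart of the argument is the identification of the composite appearing in Corollary \ref{corollary11}, namely $\pi_1$ followed by $\cup\,\tfrac{1}{p^m}\log\chi_{cyc}$, with the dual exponential $\mrm{exp}^*$ of \eqref{exp*}. Over $K_m=\Q_p(\zeta_{Np^m})$ the homomorphism $\log\chi_{cyc}$ takes values in $p^m\Z_p$, so that $\tfrac{1}{p^m}\log\chi_{cyc}$ generates $H^1(K_m,\Z_p)$; and by Tate's computation $H^1(K_m,\mathbb{C}_p)$ is one dimensional over $K_m$, with cup product by $\log\chi_{cyc}$ inducing an isomorphism $K_m\xrightarrow{\sim}H^1(K_m,\mathbb{C}_p)$. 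After inverting $p$ the intermediate group $H^1(K_m,\widehat{\overline{\mathfrak{o}}})\otimes_{\mathfrak{o}}\mrm{Fil}^1H^1_{dR}(\mathcal{Y}_H(N))$ of Corollary \ref{corollary11} becomes $H^1(K_m,\mathbb{C}_p)\otimes_{\mathfrak{o}}\mrm{Fil}^1H^1_{dR}(\mathcal{Y}_H(N))$, and the vertical cup product map becomes an isomorphism that I can invert, reading off a value directly in $K_m\otimes_{\mathfrak{o}}\mrm{Fil}^1H^1_{dR}(\mathcal{Y}_H(N))$. It then remains to check that this value is $\mrm{exp}^*$. This is the content of Kato's computation as exposed by Scholl: the map $\pi_1$ of \cite{scholl} is the reduction modulo $t$, onto the graded piece $\mathrm{gr}^0B_{dR}^+=\mathbb{C}_p$, of the canonical map into $B_{dR}^+$-cohomology used in \eqref{exp*}; and since $\mrm{Fil}^1H^1_{dR}(\mathcal{Y}_H(N))=\mrm{Fil}^0D_{dR}(H^1_{\text{\'et}}(\mathcal{Y}_H(N)\otimes_{\mathfrak{o}}\overline{K},\Q_p(1)))$ sits in the single Hodge--Tate weight for which cup product with $t=\log\chi_{cyc}$ is an isomorphism, only this graded piece contributes to $\mrm{exp}^*$. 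The comparison isomorphism of \cite{beilinson-padic} is what guarantees that the de Rham and Hodge--Tate descriptions underlying $\mrm{exp}^*$ and $\pi_1$ agree.

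The last ingredient is bookkeeping of powers of $p$. The bottom horizontal map passes from $(\tfrac{1}{p^{n-m}}tr_{n,m})_{n\geq m}$ in Corollary \ref{corollary11} to $(\tfrac{1}{p^{n}}tr_{n,m})_{n\geq m}$ here, a factor of $\tfrac{1}{p^m}$. This is exactly the discrepancy between $\mrm{exp}^*$, whose definition inverts $\cup\,\log\chi_{cyc}$, and the map $\cup\,\tfrac{1}{p^m}\log\chi_{cyc}$ of Corollary \ref{corollary11}, since $\log\chi_{cyc}=p^m\cdot\tfrac{1}{p^m}\log\chi_{cyc}$ on $\mrm{Gal}(\overline{K}/K_m)$. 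Hence the normalizations are consistent, and after the reinterpretation above the tensored diagram of Corollary \ref{corollary11} is precisely the diagram in the statement, which is therefore commutative.

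I expect the only real subtlety to be the middle step, the identification of $\pi_1$ together with the cyclotomic character with the Bloch--Kato dual exponential. The essential input is Scholl's comparison \cite[Corollary 3.2.4]{scholl}, and I expect it to transfer cleanly here because, by the K\"unneth decomposition of $\mathcal{Y}_H(N)=\mathcal{Y}(N)\times_{\mathfrak{o}}\mathcal{Y}(N)$, the representation $H^1_{\text{\'et}}(\mathcal{Y}_H(N)\otimes_{\mathfrak{o}}\overline{K},\Q_p(1))$ has the same two-step Hodge--Tate structure as the $H^1$ of a single modular curve. The genuinely surface-specific inputs — the map $d\log$ into $2$-forms and the projection $pr$ — have already been absorbed into Theorem \ref{cdfinite} and Corollary \ref{corollary11}, so that what remains for the present statement is essentially formal.
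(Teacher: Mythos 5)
Your proposal is correct and follows essentially the same route as the paper: the paper also deduces Corollary \ref{main'12} from Corollary \ref{corollary11} by identifying $\pi_1$ (extended $\widehat{\overline{K}}$-linearly) with the Hodge--Tate projection via the discussion in Section 3.2 of \cite{scholl}, so that by the very definition of $\mrm{exp}^*$ the square relating $\pi_1$, $\mrm{exp}^*$ and $\cup\log\chi_{cyc}$ commutes, the torsion ambiguity disappearing because the final target is a $\Q_p$-vector space. Your extra bookkeeping of the factor $1/p^m$ between $(\tfrac{1}{p^{n-m}}tr_{n,m})$ and $(\tfrac{1}{p^{n}}tr_{n,m})$ is left implicit in the paper but is exactly the right consistency check.
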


\begin{rems} The statement of the commutativity of the diagram of Corollary \ref{main'12}, where the product of modular curves $\mathcal{Y}_H(N)=\mathcal{Y}(N) \times_{\mathfrak{o}} \mathcal{Y}(N)$ is replaced by a single modular curve $\mathcal{Y}(N)$ is \cite[Corollary 3.2.4]{scholl}.
\end{rems}

\begin{proof}[Proof of Corollary \ref{main'12} assuming Corollary \ref{corollary11}] The composition of the homomorphism $\pi_1: H^1_{\text{\'et}}\left( \mathcal{Y}_H(N) \otimes {\overline{K}}, \Z_p(1)\right) \rightarrow \mrm{Fil}^1 H^1_{dR}(\mathcal{Y}_H(N)) \otimes_{\mathfrak{o}} \widehat{\overline{\mathfrak{o}}}$ and of the map $\mrm{Fil}^1 H^1_{dR}(\mathcal{Y}_H(N)) \otimes_{\mathfrak{o}} \widehat{\overline{\mathfrak{o}}} \rightarrow \mrm{Fil}^1 H^1_{dR}(\mathcal{Y}_H(N)) \otimes_{\mathfrak{o}} \widehat{\overline{K}}$ induced by the inclusion $\widehat{\overline{\mathfrak{o}}} \hookrightarrow \widehat{\overline{K}}$ is a homomorphism 
$$
H^1_{\text{\'et}}\left( \mathcal{Y}_H(N) \otimes_{\mathfrak{o}} {\overline{K}}, \Z_p(1)\right) \rightarrow \mrm{Fil}^1 H^1_{dR}(\mathcal{Y}_H(N)) \otimes_{\mathfrak{o}} \widehat{\overline{K}}.
$$ This homomorphisms extends by $\widehat{\overline{K}}$-linearity to a homomorphism
$$\pi_1': H^1_{\text{\'et}}\left( \mathcal{Y}_H(N) \otimes_{\mathfrak{o}} {\overline{K}}, \Z_p(1)\right)\otimes_{\Z_p} \widehat{\overline{K}}\rightarrow \mrm{Fil}^1 H^1_{dR}(\mathcal{Y}_H(N)) \otimes_{\mathfrak{o}} \widehat{\overline{K}}.$$ It follows from the discussion in Section 3.2 of \cite{scholl} that $\pi'_1$ is the composite of the Hodge-Tate isomorphism
$$
H^1_{\text{\'et}}\left( \mathcal{Y}_H(N) \otimes_{\mathfrak{o}} {\overline{K}}, \Q_p(1)\right) \otimes_{\Q_p} \widehat{\overline{K}} \overset{\sim}{\rightarrow} H^0(\overline{\mathcal{Y}_H(N)}, \Omega^1_{\overline{\mathcal{Y}_H(N)}/\mathfrak{o}}(\log)) \otimes_{\mathfrak{o}} \widehat{\overline{K}} \oplus H^1(\overline{\mathcal{Y}_H(N)}, \mathcal{O}) \otimes_{\mathfrak{o}} \widehat{\overline{K}}(1)
$$
and of the canonical projection
$$
H^0(\overline{\mathcal{Y}_H(N)}, \Omega^1_{\overline{\mathcal{Y}_H(N)}/\mathfrak{o}}(\log)) \otimes_{\mathfrak{o}} \widehat{\overline{K}} \oplus H^1(\overline{\mathcal{Y}_H(N)}, \mathcal{O}) \otimes_{\mathfrak{o}} \widehat{\overline{K}}(1)
$$
$$
 \rightarrow H^0(\overline{\mathcal{Y}_H(N)}, \Omega^1_{\overline{\mathcal{Y}_H(N)}/\mathfrak{o}}(\log)) \otimes_{\mathfrak{o}} \widehat{\overline{K}}.
$$
As a consequence, by definition of the homorphism $\mrm{exp}^*$, the diagram
$$
\begin{tikzcd}
H^1(K_m, H^1(\mathcal{Y}_H(N) \otimes_{\mathfrak{o}} \overline{K}, \Q_p(1))) \ar[r, "\pi_1"] \ar[d, "\mrm{exp}^*"]& H^1(K_m, \widehat{\overline{K}} \otimes_{\mathfrak{o}} \mrm{Fil}^1H^1_{dR}(\mathcal{Y}_H(N)/\mathfrak{o})) \ar[d, equal]\\
K_m \otimes_{\mathfrak{o}} \mrm{Fil}^1H^1_{dR}(\mathcal{Y}_H(N)/\mathfrak{o}) \ar[r, "\cup \log \chi_{cyc}"] & H^1(K_m, \widehat{\overline{K}}) \otimes_{\mathfrak{o}} \mrm{Fil}^1H^1_{dR}(\mathcal{Y}_H(N)/\mathfrak{o})
\end{tikzcd}
$$
is commutative. Hence Corollary \ref{corollary11} implies Corollary \ref{main12}.
\end{proof}

\begin{pro} \label{homologicaltriviality} The natural inclusion
$$
\left( \underleftarrow{\lim}_n K_2\left(\mathcal{Y}_H(N) \otimes_{\mathfrak{o}} \mathfrak{o}_n \right) \otimes \mu_{p^n}^{-1} \right)^0 \subset \underleftarrow{\lim}_n K_2\left(\mathcal{Y}_H(N) \otimes_{\mathfrak{o}} \mathfrak{o}_n \right) \otimes \mu_{p^n}^{-1},
$$
where $\left( \underleftarrow{\lim}_n K_2\left(\mathcal{Y}_H(N) \otimes_{\mathfrak{o}} \mathfrak{o}_n \right) \otimes \mu_{p^n}^{-1} \right)^0$ is defined in section \ref{K20} above, is an equality.
\end{pro}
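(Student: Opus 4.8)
The plan is to show that the restriction to $\overline{K}$ of the Chern character images vanishes for every norm-compatible system, since this is exactly the assertion of homological triviality. Write $A_n = H^2_{\text{et}}(\mathcal{Y}_H(N)\otimes_{\mathfrak{o}}\overline{K}, \Z/p^n\Z(1))$, let $G_n = \Gal(\overline{K}/K_n)$ and $G_\infty = \Gal(\overline{K}/K_\infty)$ with $K_\infty = \bigcup_n K_n = \Q_p(\zeta_{Np^\infty})$. By the definition of the superscript $0$ in Section \ref{K20}, an element $x=(x_n)_n$ of $\underleftarrow{\lim}_n K_2(\mathcal{Y}_H(N)\otimes_{\mathfrak{o}}\mathfrak{o}_n)\otimes\mu_{p^n}^{-1}$ lies in the homologically trivial part if and only if $r_n := \mathrm{res}_{\overline{K}}(Ch_n(x_n)) = 0$ in $A_n$ for every $n$, where $\mathrm{res}_{\overline{K}}$ is restriction along $G_n$. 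Since an element of an inverse limit vanishes exactly when all its components do, it suffices to prove that the system $(r_n)_n$ is the zero element of the inverse limit in which it naturally lives.

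First I would identify the transition maps making $(r_n)_n$ coherent. The transition maps defining $\underleftarrow{\lim}_n K_2(\ldots\mathfrak{o}_n)\otimes\mu_{p^n}^{-1}$ are the norm maps $N_{\mathfrak{o}_n/\mathfrak{o}_{n-1}}$ on $K_2$ together with the natural projection $\mu_{p^n}^{-1}\to\mu_{p^{n-1}}^{-1}$. By compatibility of the Chern character with proper pushforward (Gillet--Soul\'e), the norm corresponds under $Ch$ to the transfer (corestriction) $\mathrm{cor}_{K_n/K_{n-1}}$ on \'etale cohomology, while the $\mu^{-1}$-projection corresponds to reduction modulo $p^{n-1}$. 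Using the standard identity $\mathrm{res}_{\overline{K}}\circ\mathrm{cor}_{K_n/K_{n-1}} = \sum_{\sigma\in G_{n-1}/G_n}\sigma\circ\mathrm{res}_{\overline{K}}$ and the $G_n$-invariance of $r_n$, I obtain $r_{n-1} = \mathrm{red}_{n-1}\big(\mathrm{tr}_{G_{n-1}/G_n}(r_n)\big)$ with $\mathrm{tr}_{G_{n-1}/G_n} = \sum_{\sigma\in G_{n-1}/G_n}\sigma$. This is precisely the corestriction-and-reduction transition, so $(r_n)_n$ defines an element of the local Iwasawa cohomology group $\underleftarrow{\lim}_n H^0(K_n, A_n) = H^0_{Iw}(K_\infty/K, A)$, where $A = H^2_{\text{et}}(\mathcal{Y}_H(N)\otimes_{\mathfrak{o}}\overline{K}, \Z_p(1))$ is a finitely generated $\Z_p$-module with continuous $G_K$-action.

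It then remains to prove that this $H^0_{Iw}$ vanishes. Writing $S = A^{G_\infty}$ (finitely generated over $\Z_p$), one has $A^{G_n} = S^{\Gamma_n}$ with $\Gamma_n = \Gal(K_\infty/K_n)$, and corestriction becomes the trace over $\Gamma_{n-1}/\Gamma_n$. Since $\Gamma := \Gal(K_\infty/K)$ is, up to a finite group, isomorphic to $\Z_p$, I would argue piece by piece: on any subquotient on which $\Gamma$ acts through a character of infinite order one has $S^{\Gamma_n}=0$ for all $n$, whereas on any subquotient on which $\Gamma$ acts through a finite quotient the transition maps are, for $n$ large, multiplication by $[\Gamma_{n-1}:\Gamma_n]=p$. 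In both cases $\underleftarrow{\lim}_n (S^{\Gamma_n}, \mathrm{tr}) = 0$, as the model computation $\underleftarrow{\lim}(\Z/p^n, \times p)=0$ (and its $\Z_p$-analogue) shows. Hence $H^0_{Iw}(K_\infty/K, A) = 0$, so $r_n=0$ for all $n$, which is the desired homological triviality. (Note that this argument does not even require the explicit K\"unneth description of $A$; affineness of $\mathcal{Y}(N)$ and the vanishing of the top cohomology are used only to interpret $A$ concretely.)

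The main obstacle I anticipate is the second paragraph: pinning down precisely that the norm transition on $K_2$ becomes corestriction under the integral Chern character $Ch_n$, and correctly tracking the $\mu_{p^n}^{-1}$-twist together with the reduction $\Z/p^n\to\Z/p^{n-1}$, so that $(r_n)_n$ really is a corestriction-compatible system and no spurious torsion (the $p^c$ appearing in Theorem \ref{cdfinite}) survives in the inverse limit. The vanishing of the local cyclotomic $H^0_{Iw}$ is then a standard and robust input.
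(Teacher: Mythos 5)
Your proposal is correct and follows essentially the same route as the paper: reduce the claim to the vanishing of $\underleftarrow{\lim}_m H^0\bigl(K_m, H^2_{\text{\'et}}(\mathcal{Y}_H(N) \otimes_{\mathfrak{o}} \overline{K}, \Z_p(1))\bigr)$ with corestriction transition maps, and deduce that vanishing from the finite generation of $H^2$ over $\Z_p$ together with the unbounded $p$-adic valuation of the degrees $[K_m:K_M]$ (the paper's Lemma \ref{H0Iw} stabilizes the invariants and observes the trace becomes multiplication by the degree, which is the same mechanism as your character-by-character decomposition).
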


To prove this result, we will use the following Lemma. Even if its proof is standard, we would like to recall it for the convenience of the reader.

\begin{lem} \label{H0Iw}
The inverse limit $\underleftarrow{\lim}_m H^0(K_m, H^2_{\text{\'et}}(\mathcal{Y}_H(N) \otimes_{\mathfrak{o}} \overline{K}, \Z_p(1)))
,$ where the transition homomorphisms are the corestriction homomorphisms, vanishes.
\end{lem}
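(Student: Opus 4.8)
The plan is to reduce the statement to the elementary fact that, on Galois invariants, corestriction is multiplication by the local degree, which in the cyclotomic direction is eventually a power of $p$; the nonvanishing of each individual $H^0(K_m,\,\cdot\,)$ is then irrelevant because it is killed in the limit.

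Write $T = H^2_{\text{\'et}}(\mathcal{Y}_H(N) \otimes_{\mathfrak{o}} \overline{K}, \Z_p(1))$. Since $\mathcal{Y}_H(N)$ is a variety of finite type over $K$, the module $T$ is finitely generated over $\Z_p$ and carries a continuous action of $G_K = \mathrm{Gal}(\overline{K}/K)$; in particular $H^0(K_m, T) = T^{G_{K_m}}$ with $G_{K_m} = \mathrm{Gal}(\overline{K}/K_m)$. First I would observe that the submodules $T^{G_{K_m}}$, for $m \geq 1$, form an increasing chain of $\Z_p$-submodules of the Noetherian $\Z_p$-module $T$. Hence this chain stabilizes: there is an integer $m_0$ with $T^{G_{K_m}} = T^{G_{K_{m_0}}} =: W$ for all $m \geq m_0$, and $W$ is a finitely generated $\Z_p$-module.

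Next I would identify the transition maps on the stable range. For $m \geq m_0$ the corestriction $H^0(K_{m+1}, T) \to H^0(K_m, T)$ is the map $W \to W$ sending $x$ to $\sum_{g} g \cdot x$, the sum running over representatives of $G_{K_m}/G_{K_{m+1}}$. Since each such $x$ already lies in $W = T^{G_{K_m}}$, every summand equals $x$, so the map is multiplication by $[K_{m+1} : K_m]$. As $K_m = \Q_p(\zeta_{Np^m})$ and $p \nmid N$, the extension $K_m/\Q_p$ is totally ramified of degree $\varphi(p^m)$ over its unramified subfield $\Q_p(\zeta_N)$, so $[K_{m+1} : K_m] = p$ for all $m \geq 1$; the transition maps of the tail of the system are therefore all multiplication by $p$ on $W$.

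It then follows that $\underleftarrow{\lim}_m H^0(K_m, T) = \bigcap_{k \geq 0} p^k W = 0$, the last equality holding because $W$ is finitely generated over $\Z_p$: any compatible sequence $(x_m)_m$ satisfies $x_m = p^k x_{m+k}$ for $m \geq m_0$, hence $x_m \in \bigcap_k p^k W = 0$ on the stable range, and so vanishes everywhere. The only substantive inputs are the finite generation of $T$ and the corestriction-equals-degree identity, so I do not expect any genuine obstacle here. I would, however, stress that $W$ itself is typically nonzero — the product $\mathcal{Y}_H(N) = \mathcal{Y}(N) \times_{\mathfrak{o}} \mathcal{Y}(N)$ carries algebraic cycle classes, such as that of the diagonal, whose images in $H^2_{\text{\'et}}(\,\cdot\,, \Z_p(1))$ are Galois invariant — so the vanishing genuinely relies on the corestriction transition maps and cannot be seen level by level.
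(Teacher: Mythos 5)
Your proof is correct and follows essentially the same route as the paper: both arguments use that $T=H^2_{\text{\'et}}(\mathcal{Y}_H(N)\otimes_{\mathfrak{o}}\overline{K},\Z_p(1))$ is a finitely generated $\Z_p$-module, that the increasing chain of invariant submodules stabilizes, and that on the stable range corestriction is multiplication by the local degree, whose $p$-adic valuation tends to infinity, so the inverse limit is $\bigcap_k p^k W=0$. Your closing remark that $W$ itself is nonzero (e.g.\ via the diagonal cycle class) is a pleasant addition but not needed for the argument.
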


\begin{proof} The $\Z_p$-module $H^2_{\text{\'et}}(\mathcal{Y}_H(N) \otimes_{\mathfrak{o}} \overline{K}, \Z_p(1))$ is finitely generated. Hence, the increasing sequence of sub-modules
$$
H^0(K_1, H^2_{\text{\'et}}(\mathcal{Y}_H(N) \otimes_{\mathfrak{o}} \overline{K}, \Z_p(1))) \subset H^0(K_2, H^2_{\text{\'et}}(\mathcal{Y}_H(N) \otimes_{\mathfrak{o}} \overline{K}, \Z_p(1))) \subset \ldots
$$
is stationary. In other words, there exists an integer $M \geq 1$ such that for any $m \geq M$, we have 
$
H^0(K_m, H^2_{\text{\'et}}(\mathcal{Y}_H(N) \otimes_{\mathfrak{o}} \overline{K}, \Z_p(1)))=H^0(K_M, H^2_{\text{\'et}}(\mathcal{Y}_H(N) \otimes_{\mathfrak{o}} \overline{K}, \Z_p(1)))$. As a consequence, we have
$$
\underleftarrow{\lim}_m H^0(K_m, H^2_{\text{\'et}}(\mathcal{Y}_H(N) \otimes_{\mathfrak{o}} \overline{K}, \Z_p(1))) = \bigcap_{m \geq M} \left( [K_m : K_M] H^2_{\text{\'et}}(\mathcal{Y}_H(N) \otimes_{\mathfrak{o}} \overline{K}, \Z_p(1)) \right).
$$
For any $m$, the field $K_m$ is $\Q_p(\zeta_{Np^m})$ and the $p$-adic valuation of $[\Q_p(\zeta_{Np^m}):\Q_p]$ is greater or equal to $m-1$. Hence, the limit when $m$ goes to infinity of the $p$-adic valuation of $[K_m : K_M]$ goes to $+\infty$. This implies the vanishing of $$\bigcap_{m \geq M} \left( [K_m : K_M] H^2_{\text{\'et}}(\mathcal{Y}_H(N) \otimes_{\mathfrak{o}} \overline{K}, \Z_p(1)) \right)$$ and completes the proof of the Lemma.
\end{proof}

\begin{proof}[Proof of Proposition \ref{homologicaltriviality}] We need to show that the composite of the map
$$
\underleftarrow{\lim}_n K_2\left(\mathcal{Y}_H(N) \otimes_{\mathfrak{o}} \mathfrak{o}_n \right) \otimes \mu_{p^n}^{-1} \rightarrow H^2_{\text{\'et}}(\mathcal{Y}_H(N) \otimes_{\mathfrak{o}} K_m, \Z_p(1))
$$
defined above and of the map
$$
H^2_{\text{\'et}}(\mathcal{Y}_H(N) \otimes_{\mathfrak{o}} K_m, \Z_p(1)) \rightarrow H^2_{\text{\'et}}(\mathcal{Y}_H(N) \otimes_{\mathfrak{o}} \overline{K}, \Z_p(1))
$$ 
induced by $\mathcal{Y}_H(N) \otimes_{\mathfrak{o}} \overline{K} \rightarrow \mathcal{Y}_H(N) \otimes_{\mathfrak{o}} K_m$ is the zero map. But the image of this composite lies in $\underleftarrow{\lim}_m H^0(K_m, H^2_{\text{\'et}}(\mathcal{Y}_H(N) \otimes_{\mathfrak{o}} \overline{K}, \Z_p(1)))$, which is zero according to Lemma \ref{H0Iw}.
\end{proof}

\begin{cor} \label{main12} The diagram 
$$
\begin{tikzcd}
 \underleftarrow{\lim}_n K_2\left(\mathcal{Y}_H(N)\otimes_{\mathfrak{o}} \mathfrak{o}_n \right) \otimes \mu_{p^n}^{-1}  \ar[r, "\iota_* \circ (Tr_{n,m} \circ Ch_n)_{n \geq 1}"] \ar[d, "d\log" left]& H^4_{\text{\'et}} \left( \mathcal{Y}_G(N) \otimes_{\mathfrak{o}} K_m, \Z_p(2) \right)^0 \ar[d, "Hochschild-Serre"]\\
\underleftarrow{\lim}_n H^0\left(\overline{\mathcal{Y}_H(N)} \otimes_{\mathfrak{o}} \mathfrak{o}_n, \Omega^2_{\overline{\mathcal{Y}_H(N)} \otimes \mathfrak{o}_n}(\log) \right)(-1) \ar[d, "pr" left] & H^1\left(K_m, H^3_{\text{\'et}}(\mathcal{Y}_G(N) \otimes_{\mathfrak{o}} \overline{K}, \Z_p(2)) \right) \ar[d]\\
\underleftarrow{\lim}_n \left( \Omega^1_{\mathfrak{o}_n/\mathfrak{o}}(-1) \otimes_{\mathfrak{o}} \mrm{Fil}^1 H^1_{dR}(\mathcal{Y}_H(N)) \right) \ar[d, "d \log \zeta_{p^n} \otimes \langle \zeta_{p^n} \rangle^{-1} \mapsto 1" left] & H^1\left(K_m, H^3_{\text{\'et}}(\mathcal{Y}_G(N) \otimes_{\mathfrak{o}} \overline{K}, \Q_p(2)) \right) \ar[d, "\mrm{exp}^*"]\\
\underleftarrow{\lim}_n \left( \mathfrak{o}_n/\mathfrak{d}_n \otimes_{\mathfrak{o}} \mrm{Fil}^1 H^1_{dR}(\mathcal{Y}_H(N)) \right) \ar[r, "\iota_* \circ (1/p^n tr_{n,m})_{n \geq m}"] & K_m \otimes_{\mathfrak{o}} \mrm{Fil}^2 H^3_{dR}(\mathcal{Y}_G(N)).
\end{tikzcd}
$$
is commutative.
\end{cor}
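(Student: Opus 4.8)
The plan is to deduce Corollary \ref{main12} from Corollary \ref{main'12} by pushing forward along the closed embedding $\iota\colon \mathcal{Y}_H(N) \hookrightarrow \mathcal{Y}_G(N)$, which is a regular immersion of smooth $\mathfrak{o}$-schemes of relative codimension $c = 3 - 2 = 1$. First, Proposition \ref{homologicaltriviality} identifies the source of the diagram of Corollary \ref{main12} (the full $K_2$-limit) with the homologically trivial subgroup $\left( \underleftarrow{\lim}_n K_2(\mathcal{Y}_H(N) \otimes_{\mathfrak{o}} \mathfrak{o}_n) \otimes \mu_{p^n}^{-1} \right)^0$ appearing in Corollary \ref{main'12}, so the two diagrams share the same top-left term. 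Moreover the entire left-hand column of the two diagrams coincides, as it involves only the cohomology of $\mathcal{Y}_H(N)$. It therefore suffices to append the Gysin pushforward maps $\iota_*$ to the right-hand column and to the bottom-right target of the diagram of Corollary \ref{main'12}, and to check that the two newly created squares commute.

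The Gysin pushforward for the codimension-one immersion $\iota$ induces compatible maps on the three cohomology theories in play: on \'etale cohomology, $\iota_*\colon H^i_{\text{\'et}}(\mathcal{Y}_H(N), -(j)) \to H^{i+2}_{\text{\'et}}(\mathcal{Y}_G(N), -(j+1))$; on the geometric fibres, a morphism of $\mathrm{Gal}(\overline{K}/K)$-representations $\iota_*\colon H^1_{\text{\'et}}(\mathcal{Y}_H(N) \otimes_{\mathfrak{o}} \overline{K}, \Q_p(1)) \to H^3_{\text{\'et}}(\mathcal{Y}_G(N) \otimes_{\mathfrak{o}} \overline{K}, \Q_p(2))$; and on de Rham cohomology $\iota_*\colon H^1_{dR}(\mathcal{Y}_H(N)) \to H^3_{dR}(\mathcal{Y}_G(N))$, which shifts the Hodge filtration by $c=1$ and hence restricts to a map $\mathrm{Fil}^1 H^1_{dR}(\mathcal{Y}_H(N)) \to \mathrm{Fil}^2 H^3_{dR}(\mathcal{Y}_G(N))$. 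The first task is the top-right square, namely the compatibility of $\iota_*$ with the edge homomorphisms of the Hochschild--Serre spectral sequences of $\mathcal{Y}_H(N)$ and $\mathcal{Y}_G(N)$; this follows from the functoriality of the spectral sequence with respect to the pushforward. Along the way one checks that $\iota_*$ preserves homological triviality: since the Gysin map commutes with restriction to the geometric fibre, a class dying in $H^2_{\text{\'et}}(\mathcal{Y}_H(N)\otimes_{\mathfrak{o}}\overline{K}, \Z/p^n\Z(1))$ goes to a class dying in $H^4_{\text{\'et}}(\mathcal{Y}_G(N)\otimes_{\mathfrak{o}}\overline{K}, \Z/p^n\Z(2))$, so that the image of $\iota_*$ indeed lands in the superscript-$0$ subgroup $H^4_{\text{\'et}}(\mathcal{Y}_G(N)\otimes_{\mathfrak{o}} K_m, \Z_p(2))^0$.

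The second, and I expect principal, task is the bottom-right square, i.e. the compatibility of $\iota_*$ with the dual exponential map $\mathrm{exp}^*$. Since the geometric Gysin map is a morphism of de Rham $p$-adic Galois representations, by functoriality of the Bloch--Kato dual exponential \cite{bloch-kato} it suffices to identify, on the $\mathrm{Fil}^0 D_{dR}$-pieces, the map induced by $\iota_*$ under $D_{dR}$ with the de Rham pushforward. Concretely, applying $D_{dR}$ to the \'etale Gysin map and using the identifications $\mathrm{Fil}^0 D_{dR}(H^1_{\text{\'et}}(\mathcal{Y}_H(N)\otimes_{\mathfrak{o}}\overline{K}, \Q_p(1))) \simeq \mathrm{Fil}^1 H^1_{dR}(\mathcal{Y}_H(N))$ and $\mathrm{Fil}^0 D_{dR}(H^3_{\text{\'et}}(\mathcal{Y}_G(N)\otimes_{\mathfrak{o}}\overline{K}, \Q_p(2))) \simeq K \otimes_{\mathfrak{o}} \mathrm{Fil}^2 H^3_{dR}(\mathcal{Y}_G(N))$ recalled earlier, the induced map corresponds to the de Rham pushforward $\mathrm{Fil}^1 H^1_{dR}(\mathcal{Y}_H(N)) \to \mathrm{Fil}^2 H^3_{dR}(\mathcal{Y}_G(N))$. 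The key input is the compatibility of the Beilinson $p$-adic comparison isomorphism \cite{beilinson-padic} with Gysin maps for closed immersions of smooth schemes; this is the step demanding the most care, as it is what links the \'etale-style pushforward to the algebraic de Rham pushforward and matches the filtration shifts with the Tate twists $(1)$ and $(2)$.

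Finally, I would assemble the pieces. Post-composing the entire commutative diagram of Corollary \ref{main'12} with the de Rham pushforward $\iota_*\colon K_m \otimes_{\mathfrak{o}} \mathrm{Fil}^1 H^1_{dR}(\mathcal{Y}_H(N)) \to K_m \otimes_{\mathfrak{o}} \mathrm{Fil}^2 H^3_{dR}(\mathcal{Y}_G(N))$, the ``down-then-across'' path becomes precisely the left column of Corollary \ref{main12} followed by its bottom map $\iota_* \circ (1/p^n\, tr_{n,m})_{n \geq m}$. On the other hand the ``across-then-down'' path becomes $\iota_* \circ \mathrm{exp}^* \circ (\cdots)$, which by the top-right square, the obvious compatibility of $\iota_*$ with the $\Z_p \to \Q_p$ coefficient change, and the bottom-right square, equals the right column of Corollary \ref{main12} applied to $\iota_* \circ (Tr_{n,m}\circ Ch_n)_{n\geq 1}$. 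Comparing the two paths yields the asserted commutativity.
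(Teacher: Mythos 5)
Your proposal is correct and follows essentially the same route as the paper: the paper likewise deduces Corollary \ref{main12} from Corollary \ref{main'12} (with Proposition \ref{homologicaltriviality} identifying the source) by appending the pushforward $\iota_*$ to the right-hand column and invoking the functoriality of the Hochschild--Serre spectral sequence and of the dual exponential map for the two new squares. Your additional remarks on the codimension-one Gysin map, the preservation of homological triviality, and the compatibility of the Beilinson comparison with pushforward simply make explicit what the paper leaves implicit.
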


\begin{proof}
The statement follows from Corollary \ref{main'12} and from the commutativity of the diagram
$$
\begin{tikzcd}
H^2_{\text{\'et}} \left( \mathcal{Y}_H(N) \otimes_{\mathfrak{o}} K_m, \Z_p(1) \right)^0  \ar[r, "\iota_*"] \ar[d, "Hochschild-Serre" left]& H^4_{\text{\'et}} \left( \mathcal{Y}_G(N) \otimes_{\mathfrak{o}} K_m, \Z_p(2) \right)^0 \ar[d, "Hochschild-Serre"]\\
H^1\left(K_m, H^1_{\text{\'et}}(\mathcal{Y}_H(N) \otimes_{\mathfrak{o}} \overline{K}, \Z_p(1)) \right)  \ar[d] & H^1\left(K_m, H^3_{\text{\'et}}(\mathcal{Y}_G(N) \otimes_{\mathfrak{o}} \overline{K}, \Z_p(2)) \right) \ar[d]\\
H^1\left(K_m, H^1_{\text{\'et}}(\mathcal{Y}_H(N) \otimes_{\mathfrak{o}} \overline{K}, \Q_p(1) \right) \ar[d, "\mathrm{exp}^*" left] & H^1\left(K_m, H^3_{\text{\'et}}(\mathcal{Y}_G(N) \otimes_{\mathfrak{o}} \overline{K}, \Q_p(2)) \right) \ar[d, "\mrm{exp}^*"]\\
K_m \otimes_{\mathfrak{o}} \mrm{Fil}^1 H^1_{dR}(\mathcal{Y}_H(N))) \ar[r, "\iota_*"] & K_m \otimes_{\mathfrak{o}} \mrm{Fil}^2 H^3_{dR}(\mathcal{Y}_G(N))
\end{tikzcd}
$$
which follows from the functoriality of the Hochschild-Serre spectral sequence and of the dual exponential map.
\end{proof}

In the following, we give a proof of Theorem \ref{cdfinite}.

\subsection{K\"ahler differentials of $2$-dimensional local fields with imperfect residue field} \label{prelimkahler} In this section, we introduce notation and review some general facts about K\"ahler differentials of local fields with imperfect residue field. We call such fields big local fields. We follow the presentation of \cite[\S 3.4]{scholl}, which treats the case of an arbitrary big local field.\\

Let $L$ denote an extension of $\Q_p$ such that $L$ is complete with respect to a discrete valuation and its residue field $l$ satisfies $[l:l^p]=p^2$. Let $A$ denote the ring of integers of $L$. For any subring $R \subset A$, let 
$$
\widehat{\Omega}^1_{A/R}=\underleftarrow{\lim}\, {\Omega}^1_{A/R}/p^n{\Omega}^1_{A/R}.
$$
Let $K \subset L$ denote a finite extension of $\Q_p$ with ring of integers $\mathfrak{o}$.

\begin{pro} \label{structure-diff} The following statements hold.
\begin{itemize}
\item[\text{(i)}] The $A$-module $\widehat{\Omega}^1_{A/\mathfrak{o}}$ is finitely generated.
\item[\text{(ii)}] If $T_1$ and $T_2$ are elements whose image in $l$ form a $p$-basis, then the elements $d\log T_1$ and $d\log T_2$ form a basis of the $L$-vector space $\widehat{\Omega}^1_{A/\mathfrak{o}} \otimes_{A} L$.
\item[\text{(iii)}] Let $\varpi_K$ denote a uniformizer in $K$. If $\varpi_K$ is prime in $A$, then $\widehat{\Omega}^1_{A/\mathfrak{o}}$ is free over $A$.
\end{itemize}
\end{pro}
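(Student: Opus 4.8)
The plan is to reduce all three assertions to the structure theory of complete discrete valuation rings, by first extracting the maximal unramified $\mathfrak{o}$-subalgebra of $A$. Let $\mathbb{F}$ denote the (finite, hence perfect) residue field of $\mathfrak{o}$. Since $\mathbb{F}$ is perfect, the residue extension $l/\mathbb{F}$ is separable, and $[l:l^p]=p^2$ means a $p$-basis has two elements, so $\Omega^1_{l/\mathbb{F}}$ is free of rank $2$ over $l$ on $d\bar T_1, d\bar T_2$. By Cohen structure theory there is a complete discrete valuation ring $\mathcal{O}$ with $\mathfrak{o}\subseteq\mathcal{O}\subseteq A$, unramified over $\mathfrak{o}$ (so that $\varpi_K$ is a uniformizer of $\mathcal{O}$) and with residue field $l$: it is the unramified lift of $l$ over $\mathfrak{o}$, and the separability of $l/\mathbb{F}$ makes it formally smooth over $\mathfrak{o}$ for the $p$-adic topology. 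From formal smoothness I would deduce that $\widehat{\Omega}^1_{\mathcal{O}/\mathfrak{o}}$ is a finite free $\mathcal{O}$-module, computing its rank on the fibre $\widehat{\Omega}^1_{\mathcal{O}/\mathfrak{o}}\otimes_{\mathcal{O}} l = \Omega^1_{l/\mathbb{F}}$: it is free of rank $2$ on $dT_1, dT_2$ for lifts $T_i\in\mathcal{O}$ of $\bar T_i$. Since $v_L(p)<\infty$, the ring $A$ is module-finite and totally ramified over $\mathcal{O}$, say $A=\mathcal{O}[\varpi]/(g)$ with $g$ Eisenstein of degree $e=e(L/L')$, where $L'=\mathrm{Frac}(\mathcal{O})$.

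For (i) I would feed this into the $p$-adically completed second fundamental exact sequence for $\mathfrak{o}\to\mathcal{O}\to A$,
$$
A\otimes_{\mathcal{O}}\widehat{\Omega}^1_{\mathcal{O}/\mathfrak{o}}\longrightarrow \widehat{\Omega}^1_{A/\mathfrak{o}}\longrightarrow\widehat{\Omega}^1_{A/\mathcal{O}}\longrightarrow 0,
$$
which remains right exact after completion because both outer terms are finite over the Noetherian ring $A$. As $A$ is monogenic over $\mathcal{O}$, one has $\widehat{\Omega}^1_{A/\mathcal{O}}=\Omega^1_{A/\mathcal{O}}\cong A/(g'(\varpi))$, a cyclic torsion module (the ideal $(g'(\varpi))$ being the nonzero different of $A/\mathcal{O}$). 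Thus $\widehat{\Omega}^1_{A/\mathfrak{o}}$ is generated by $dT_1, dT_2$ and $d\varpi$, which proves (i).

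For (ii) I would invert $p$. Then $\widehat{\Omega}^1_{A/\mathcal{O}}\otimes_A L=0$, so the exact sequence shows that $\widehat{\Omega}^1_{A/\mathfrak{o}}\otimes_A L$ is spanned by $dT_1, dT_2$ (in particular $d\varpi$ lies in their $L$-span). To see these are a genuine basis I would identify $\widehat{\Omega}^1_{A/\mathfrak{o}}\otimes_A L=\Omega^1_{L/K}$ and use that $L/L'$ is finite separable (characteristic $0$), whence $\Omega^1_{L/K}\cong\Omega^1_{L'/K}\otimes_{L'}L$ is free of rank $2$ on $dT_1, dT_2$; since the $T_i$ are units, $d\log T_1, d\log T_2$ is an equivalent basis, giving (ii). For (iii), the hypothesis that $\varpi_K$ is prime in $A$ says exactly that $A$ is unramified over $\mathfrak{o}$, i.e. $e=1$ and $A=\mathcal{O}$, so the asserted freeness is the freeness of $\widehat{\Omega}^1_{\mathcal{O}/\mathfrak{o}}$ already obtained.

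The main obstacle I expect is the input packaged in the first paragraph: constructing the unramified lift $\mathcal{O}$ and proving that $\widehat{\Omega}^1_{\mathcal{O}/\mathfrak{o}}$ is free, that is, the formal smoothness of $\mathcal{O}$ over $\mathfrak{o}$ coming from the separability of $l/\mathbb{F}$, together with the bookkeeping needed to guarantee that the fundamental exact sequence and the fibre computations remain valid after replacing naive differentials by their $p$-adic completions. Once this structural core is in place, the three assertions follow formally.
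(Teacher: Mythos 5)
Your overall architecture is sound and is in fact the standard one: the paper gives no argument at all for this Proposition (it simply quotes \cite[Proposition 3.4.3]{scholl}), and Scholl's proof runs essentially as in your first paragraph --- extract a Cohen subring $\mathcal{O}\subseteq A$, formally smooth over $\mathfrak{o}$ with residue field $l$ and uniformizer $\varpi_K$, deduce that $\widehat{\Omega}^1_{\mathcal{O}/\mathfrak{o}}$ is finite free of rank $2$ from formal smoothness together with the fibre computation $\widehat{\Omega}^1_{\mathcal{O}/\mathfrak{o}}\otimes_{\mathcal{O}}l\simeq\Omega^1_{l/\mathbb{F}}$, and then feed the Eisenstein presentation $A=\mathcal{O}[\varpi]/(g)$ into the cotangent sequence. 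Two small remarks on (i): the right exactness of the completed cotangent sequence follows from Mittag--Leffler applied to the towers modulo $p^n$ (it cannot be justified by finiteness of the outer terms, since the finiteness of $\widehat{\Omega}^1_{A/\mathfrak{o}}$ is what you are proving); and before invoking Nakayama on the fibre you must secure finite generation of $\widehat{\Omega}^1_{\mathcal{O}/\mathfrak{o}}$, which comes from $p$-adic (equivalently $\mathfrak{m}$-adic) completeness together with the finite-dimensionality of the fibre.

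The one genuine misstep is in (ii). The identification $\widehat{\Omega}^1_{A/\mathfrak{o}}\otimes_A L=\Omega^1_{L/K}$ is false, and so is the claim that $\Omega^1_{L'/K}$ is free of rank $2$ on $dT_1,dT_2$: in characteristic $0$ one has $\dim_{L'}\Omega^1_{L'/K}=\mathrm{trdeg}(L'/K)$, and $L'$ is (up to a finite extension) the \emph{completion} of $K(T_1,T_2)$ for the Gauss valuation, hence highly transcendental over it --- exactly as $\Omega^1_{\mathbb{Q}_p/\mathbb{Q}}$ is uncountable-dimensional. The natural map $\Omega^1_{A/\mathfrak{o}}\to\widehat{\Omega}^1_{A/\mathfrak{o}}$ has enormous kernel, so linear independence cannot be transported from the uncompleted module; the $p$-adic completion is precisely the device that collapses the rank to $2$, and any correct proof of (ii) must use it. Fortunately the repair is available from ingredients you already set up: since $\widehat{\Omega}^1_{\mathcal{O}/\mathfrak{o}}$ is free on $dT_1,dT_2$ and $A=\mathcal{O}[X]/(g)$, one gets a presentation $\widehat{\Omega}^1_{A/\mathfrak{o}}\simeq\bigl(A\otimes_{\mathcal{O}}\widehat{\Omega}^1_{\mathcal{O}/\mathfrak{o}}\oplus A\,dX\bigr)/A\,dg$ with $dg=g'(\varpi)\,dX+\sum_i\varpi^i\,dc_i$, i.e. $A^3$ modulo a single relation whose last coordinate $g'(\varpi)$ is nonzero; inverting $p$ kills that coordinate's contribution and leaves $L^2$ with basis $dT_1,dT_2$ (equivalently $d\log T_1,d\log T_2$, the $T_i$ being units). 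Alternatively one can exhibit the continuous derivations $\partial/\partial T_i$ of $\mathcal{O}$ lifting those of $l$ and pair against them. With that substitution, (i), (ii) and (iii) all go through as you describe.
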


\begin{proof} This is \cite[Proposition 3.4.3]{scholl}.
\end{proof}

Fix an algebraic closure $\overline{L}$ of $L$ and let $\overline{A}$ denote the integral closure of $A$ in $L$. For any ring $B$ such that $A \subset B \subset \overline{A}$ and any subring $R \subset B$, we let
$$
\widehat{\Omega}^1_{B/R}=\underrightarrow{\lim}\, \widehat{\Omega}^1_{A'/A'\cap R}
$$
where the limit runs over all finite extensions $A'/A$ contained in $B$. Let ${}_{p^n} \widehat{\Omega}^1_{\overline{A}/A}$, resp. ${}_{p^n} \widehat{\Omega}^1_{\overline{A}/\mathfrak{o}}$, denote the $p^n$-torsion submodule of $\widehat{\Omega}^1_{\overline{A}/A}$, resp. $\widehat{\Omega}^1_{\overline{A}/\mathfrak{o}}$ and let $T_p(\widehat{\Omega}^1_{\overline{A}/A})$, resp. $T_p(\widehat{\Omega}^1_{\overline{A}/\mathfrak{o}})$, denote the inverse limit $\underleftarrow{\lim}\, {}_{p^n} \widehat{\Omega}^1_{\overline{A}/A}$, resp. $\underleftarrow{\lim}\, {}_{p^n} \widehat{\Omega}^1_{\overline{A}/\mathfrak{o}}$. 

\begin{pro} \label{ml} We have a short exact sequence of $\widehat{\overline{A}}$-modules
\begin{equation}
0 \rightarrow T_p(\widehat{\Omega}^1_{\overline{A}/\mathfrak{o}}) \rightarrow T_p(\widehat{\Omega}^1_{\overline{A}/A}) \rightarrow \widehat{\overline{A}} \otimes_{A} \widehat{\Omega}^1_{A/\mathfrak{o}} \rightarrow 0.
\end{equation} 
where $\widehat{\overline{A}}$ denotes the $p$-adic completion of $\overline{A}$.
\end{pro}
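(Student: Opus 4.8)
The plan is to obtain the sequence by applying the functors $\operatorname{Hom}_{\mathbb Z}(\mathbb Q_p/\mathbb Z_p,-)=T_p(-)$ and $\operatorname{Ext}^1_{\mathbb Z}(\mathbb Q_p/\mathbb Z_p,-)=(-)^{\wedge}$ (the $p$-adic completion) to the completed second fundamental exact sequence of K\"ahler differentials for the tower $\mathfrak o\to A\to\overline A$. Concretely, I would first establish the exactness of
$$
0\to \widehat{\overline A}\otimes_A\widehat\Omega^1_{A/\mathfrak o}\xrightarrow{\ \phi\ }\widehat\Omega^1_{\overline A/\mathfrak o}\xrightarrow{\ \psi\ }\widehat\Omega^1_{\overline A/A}\to 0 .
$$
Right exactness at $\widehat\Omega^1_{\overline A/A}$ and at $\widehat\Omega^1_{\overline A/\mathfrak o}$ is the usual second fundamental sequence, which is preserved by the filtered colimit over finite subextensions $A'/A$ and by reduction modulo $p^n$; passing to $\varprojlim_n$ requires only a Mittag-Leffler check to keep the sequence right exact.

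The three structural inputs I would then verify are the following. \textbf{($p$-divisibility.)} Both $\widehat\Omega^1_{\overline A/A}$ and $\widehat\Omega^1_{\overline A/\mathfrak o}$ are $p$-divisible: since $\overline A$ is integrally closed in $\overline L$, every $x\in\overline A$ has a $p$-th root $y\in\overline A$, so $dx=p\,y^{p-1}\,dy$ lies in $p\,\widehat\Omega^1$, and such elements generate. Hence $(\widehat\Omega^1_{\overline A/A})^{\wedge}=(\widehat\Omega^1_{\overline A/\mathfrak o})^{\wedge}=0$. \textbf{(Freeness.)} By Proposition \ref{structure-diff}, $\widehat\Omega^1_{A/\mathfrak o}$ is a finitely generated $A$-module, free after the normalization in (iii); thus the module $M'=\widehat{\overline A}\otimes_A\widehat\Omega^1_{A/\mathfrak o}$ is a free $\widehat{\overline A}$-module, so $T_pM'=0$ and $(M')^{\wedge}=M'$. \textbf{(Injectivity of $\phi$.)} Equivalently, the classes $d\log T_1,d\log T_2$ attached to a $p$-basis of the residue field $l$ stay $\widehat{\overline A}$-linearly independent inside $\widehat\Omega^1_{\overline A/\mathfrak o}$; I would prove this by descending to a finite monogenic extension $A'/A$ and using the explicit basis of Proposition \ref{structure-diff}(ii).

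Granting these, I apply the six-term exact sequence of $\operatorname{Hom}$ and $\operatorname{Ext}^1$ against $\mathbb Q_p/\mathbb Z_p$ --- exact because $\mathbb Z$ has global dimension one --- to the short exact sequence above:
$$
0\to T_pM'\to T_p(\widehat\Omega^1_{\overline A/\mathfrak o})\to T_p(\widehat\Omega^1_{\overline A/A})\to (M')^{\wedge}\to (\widehat\Omega^1_{\overline A/\mathfrak o})^{\wedge}\to (\widehat\Omega^1_{\overline A/A})^{\wedge}\to 0 .
$$
The $p$-divisibility kills the last two terms, freeness kills $T_pM'$ and identifies $(M')^{\wedge}=\widehat{\overline A}\otimes_A\widehat\Omega^1_{A/\mathfrak o}$, and the sequence collapses to exactly the claimed
$$
0\to T_p(\widehat\Omega^1_{\overline A/\mathfrak o})\to T_p(\widehat\Omega^1_{\overline A/A})\to \widehat{\overline A}\otimes_A\widehat\Omega^1_{A/\mathfrak o}\to 0 .
$$

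I expect the main obstacle to be the injectivity of $\phi$, together with the compatibility of $p$-adic completion with the colimit over $\overline A=\varinjlim A'$. The subtlety is that, in $\widehat\Omega^1_{\overline A/\mathfrak o}$, each $d\log T_i$ becomes infinitely $p$-divisible (as $d\log T_i=p^n\,d\log T_i^{1/p^n}$), so although $\phi$ lands among $p$-divisible elements, one must still show it does not collapse the rank-two free module $M'$; this is precisely where the freeness in Proposition \ref{structure-diff}(iii) and a Mittag-Leffler argument controlling $\varprojlim^1$ of the torsion submodules are needed.
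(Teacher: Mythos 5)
Your proof is essentially the one the paper relies on: Proposition \ref{ml} is deduced by citing Scholl's (3.3.4)/(3.4.4), and the argument behind that reference is exactly what you describe --- left-exactness of the fundamental sequence of differentials for $\mathfrak o\to A\to\overline A$ (checked at each finite level $A'/A$, where $\Omega^1_{A'/A}$ is torsion because $L'/L$ is separable while $A'\otimes_A\widehat\Omega^1_{A/\mathfrak o}$ is torsion-free, then passed to the filtered colimit), $p$-divisibility of the two right-hand terms, and the six-term torsion/completion sequence, your $\operatorname{Hom}/\operatorname{Ext}$ against $\Q_p/\Z_p$ being the same computation as the snake lemma for multiplication by $p^n$ followed by $\underleftarrow{\lim}$ with the Mittag-Leffler condition (which, note, is automatic: divisibility makes the transition maps ${}_{p^{n+1}}M\xrightarrow{p}{}_{p^n}M$ surjective, so $\underleftarrow{\lim}^1$ vanishes and $\operatorname{Ext}^1(\Q_p/\Z_p,M)=0$ for the divisible terms). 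One bookkeeping correction: the short exact sequence you complete should read $0\to \overline A\otimes_A\widehat\Omega^1_{A/\mathfrak o}\to\widehat\Omega^1_{\overline A/\mathfrak o}\to\widehat\Omega^1_{\overline A/A}\to 0$, since the middle term is only an $\overline A$-module (a colimit over finite $A'$), so $\widehat{\overline A}\otimes_A\widehat\Omega^1_{A/\mathfrak o}$ cannot literally be a submodule of it; the completed tensor product then appears correctly at the end as $\operatorname{Ext}^1\left(\Q_p/\Z_p,\,\overline A\otimes_A\widehat\Omega^1_{A/\mathfrak o}\right)\simeq\widehat{\overline A}\otimes_A\widehat\Omega^1_{A/\mathfrak o}$, using that $\widehat\Omega^1_{A/\mathfrak o}$ is finitely generated (and free, or at worst has bounded torsion, so its Tate module vanishes as you say).
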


\begin{proof}
This is a particular case of \cite[(3.3.4)]{scholl}. 
\end{proof}

Let for any (not necessarily finite) extension $L'/L$ let $A' \subset L'$ denote the integral closure of $A$ in $L$. Then we define
\begin{equation} \label{connecting-big}
\delta_{L'/L}: \widehat{\Omega}^1_{A/\mathfrak{o}} \otimes_{A} \widehat{A'} \rightarrow H^1 \left( L', T_p\widehat{\Omega}^1_{\overline{\mathcal{O}}/\mathfrak{o}} \right)
\end{equation}
to be the connecting homomorphism for the long exact sequence in Galois cohomology corresponding to the short exact sequence of Proposition \ref{ml}. Let $\delta_L=\delta_{L/L}$. Then $\delta_{L'/L}$ and $\delta_{L'}$ compose the commutative diagram
$$
\begin{tikzcd}
\widehat{\Omega}^1_{A/\mathfrak{o}} \otimes_{A} \widehat{A'} \ar[d] \ar[rd, "\delta_{L'/L}"] \\
\widehat{\Omega}^1_{A'/\mathfrak{o}} \ar[r, "\delta_{L'}"] & H^1 \left( L', T_p\widehat{\Omega}^1_{\overline{\mathcal{O}}/\mathfrak{o}} \right)
\end{tikzcd}
$$
where the vertical homomorphism is the caonical one.

\subsection{Galois cohomology of K\"ahler differentials} 
Let $\eta$ denote the generic point of the special fibre of  $\overline{\mathcal{Y}_H(N)}/\mathfrak{o}$ and let $\mathcal{O}$ denote the $p$-adic completion of the local ring $\mathcal{O}_{\overline{\mathcal{Y}_H(N)},\eta}$. Write $\mathcal{K}$ for the field of fractions of $\mathcal{O}$. Then $\mathcal{K}$ is a big local field of dimension $2$. Write $\mathcal{K}_n=\mathcal{K}(\zeta_{p^n})$ for any integer $n \geq 0$. Let $\mathcal{O}_n$ denote the integral closure of $\mathcal{O}$ in $\mathcal{K}_n$. We write $\mathcal{K}_{\infty}$ for $\mathcal{K}\overline{K}$ and $\mathcal{O}_\infty$ for the valuation ring of $\mathcal{K}_\infty$.\\

Let $T_1, T_2 \in \mathcal{O}^\times$ be units whose image in the residue field form a $p$-basis. Let $\mathcal{M}$ denote the extension $\mathcal{K} \left( T_1^{p^{-\infty}}, T_2^{p^{-\infty}} \right)$, let $\mathcal{M}_{\infty}=\mathcal{M}\mathcal{K}_{\infty}$, let $\mathcal{B}$ denote the valuation ring of $\mathcal{M}$, let $\mathcal{B}_\infty$ denote the valuation ring of $\mathcal{M}_\infty$ and let $\widehat{\mathcal{B}_\infty}$ denote its completion. As a $\mathcal{O}_\infty$-module, the ring $\mathcal{B}_\infty$ is free with basis $\left\{ T_1^{a_1}T_2^{a_2}, a_1, a_2 \in \Q_p/\Z_p \right\}$. By Kummer theory, the natural homomorphism $\mrm{Gal}(\mathcal{M}_\infty/\mathcal{K}_\infty) \rightarrow \Z_p(1)^2$ defined by $\sigma \mapsto \left( \left(\sigma(T_1^{p^{-n}})/T_1^{p^{-n}}\right)_{n \geq 0},  \left(\sigma(T_2^{p^{-n}})/T_2^{p^{-n}}\right)_{n \geq 0}\right)$ is an isomorphism. We denote by $\sigma_{(1,0)} \in \mrm{Gal}(\mathcal{M}_\infty/\mathcal{K}_\infty)$ (resp. $\sigma_{(0,1)} \in \mrm{Gal}(\mathcal{M}_\infty/\mathcal{K}_\infty)$) the inverse image of $(1,0)$ (resp. $(0,1)$) by this isomorphism. This means that $\sigma_{(1,0)}(T_1^{p^{-n}})=\zeta_{p^n}T_1^{p^{-n}}$ and $\sigma_{(1,0)}(T_2^{p^{-n}})=T_2^{p^{-n}}$ and that $\sigma_{(0,1)}(T_1^{p^{-n}})=T_1^{p^{-n}}$ and $\sigma_{(0,1)}(T_2^{p^{-n}})=\zeta_{p^n}T_2^{p^{-n}}$ for any $n \geq 0$.

\begin{lem} \label{colmez} 
For $a_1 ,a_2  \in \Q_p/\Z_p$, we have an isomorphism
$$
H^1 \left( \mathcal{M}_\infty/\mathcal{K}_\infty, \widehat{\mathcal{O}_\infty}T_1^{a_1}T_2^{a_2} \right)
\simeq \dfrac{\left\{ (x,y) \in \widehat{\mathcal{O}_\infty}^2, (\zeta_{p^{n_2}}^{r_2}-1)x=(\zeta_{p^{n_1}}^{r_1}-1)y\right\}}{\left\{ \left( (\zeta_{p^{n_1}}^{r_1}-1)u, (\zeta_{p^{n_2}}^{r_2}-1)u\right), u \in \widehat{\mathcal{O}_\infty} \right\}}
$$
where $r_i$ and $n_i$ are given by $a_i=r_i/p^{n_i}$. 
\end{lem}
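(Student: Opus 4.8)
The plan is to reduce the computation to the continuous group cohomology of $G=\mathrm{Gal}(\mathcal{M}_\infty/\mathcal{K}_\infty)$, which by the Kummer-theoretic isomorphism recalled above is a free $\Z_p$-module of rank two with topological generators $\sigma_{(1,0)}$ and $\sigma_{(0,1)}$. First I would make the coefficient module explicit: as a $\widehat{\mathcal{O}_\infty}$-module, $\widehat{\mathcal{O}_\infty}T_1^{a_1}T_2^{a_2}$ is free of rank one, and since $G$ fixes $\mathcal{K}_\infty$ pointwise it acts trivially on $\widehat{\mathcal{O}_\infty}$. Writing $a_i=r_i/p^{n_i}$ and $T_i^{a_i}=(T_i^{p^{-n_i}})^{r_i}$, the defining formulas for $\sigma_{(1,0)}$ and $\sigma_{(0,1)}$ give $\sigma_{(1,0)}(T_1^{a_1}T_2^{a_2})=\zeta_{p^{n_1}}^{r_1}T_1^{a_1}T_2^{a_2}$ and $\sigma_{(0,1)}(T_1^{a_1}T_2^{a_2})=\zeta_{p^{n_2}}^{r_2}T_1^{a_1}T_2^{a_2}$. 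Hence $\sigma_{(1,0)}$ and $\sigma_{(0,1)}$ act on the rank-one module by multiplication by the scalars $\zeta_{p^{n_1}}^{r_1}$ and $\zeta_{p^{n_2}}^{r_2}$, both of which lie in $\widehat{\mathcal{O}_\infty}$ because $\mathcal{K}_\infty\supset\overline{K}$ contains all $p$-power roots of unity.

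Next I would compute the continuous cohomology of $G\cong\Z_p^2$ with these coefficients by means of the Koszul complex. Since $\widehat{\mathcal{O}_\infty}$ is $p$-adically complete and separated, the continuous cochain cohomology of $G$ is computed by the total complex
$$
\widehat{\mathcal{O}_\infty} \xrightarrow{\;m\mapsto((\sigma_{(1,0)}-1)m,\,(\sigma_{(0,1)}-1)m)\;} \widehat{\mathcal{O}_\infty}^{\,2} \xrightarrow{\;(x,y)\mapsto(\sigma_{(0,1)}-1)x-(\sigma_{(1,0)}-1)y\;} \widehat{\mathcal{O}_\infty},
$$
in which, by the previous paragraph, $\sigma_{(1,0)}-1$ and $\sigma_{(0,1)}-1$ act as multiplication by $\zeta_{p^{n_1}}^{r_1}-1$ and $\zeta_{p^{n_2}}^{r_2}-1$ respectively. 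Reading off $H^1$ as the cohomology at the middle term then yields exactly the quotient in the statement: the numerator is the kernel of the second map, namely the pairs $(x,y)$ with $(\zeta_{p^{n_2}}^{r_2}-1)x=(\zeta_{p^{n_1}}^{r_1}-1)y$, and the denominator is the image of the first map, namely the pairs $((\zeta_{p^{n_1}}^{r_1}-1)u,(\zeta_{p^{n_2}}^{r_2}-1)u)$ with $u\in\widehat{\mathcal{O}_\infty}$.

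The main point requiring care is the justification that the continuous group cohomology of $\Z_p^2$ is computed by this Koszul complex. I would obtain it from the K\"unneth-type description of the cohomology of a product of two copies of $\Z_p$, each factor's cohomology being computed by the length-one complex $M\xrightarrow{\gamma-1}M$; the completeness and separatedness of $\widehat{\mathcal{O}_\infty}$ are what guarantee that the relevant inverse limits and the passage from the standard inhomogeneous cochain complex to the Koszul complex are well behaved. Once this homological input is in place, the remaining verification is the purely formal identification of kernel and image carried out above.
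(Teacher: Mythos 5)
Your proposal is correct, but it takes a genuinely different route from the paper. The paper computes $H^1$ bare-handed: it shows that evaluation at $\sigma_{(1,0)},\sigma_{(0,1)}$ identifies $Z^1$ with the set of pairs $(x,y)$ satisfying $(\zeta_{p^{n_2}}^{r_2}-1)x=(\zeta_{p^{n_1}}^{r_1}-1)y$ (injectivity by density of the subgroup generated by the two elements, surjectivity by an explicit construction of the cocycle extending a given pair, with a page of verifications of the cocycle identity), and then identifies $B^1$ directly. You instead package the whole computation as the degree-one cohomology of the Koszul complex
$M\to M^2\to M$ for the two commuting operators $\sigma_{(1,0)}-1$ and $\sigma_{(0,1)}-1$, which act as multiplication by $\zeta_{p^{n_1}}^{r_1}-1$ and $\zeta_{p^{n_2}}^{r_2}-1$; reading off kernel and image then gives the stated numerator and denominator immediately. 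Both are valid, and your identification of the group action on the rank-one module is exactly the paper's. What your approach buys is concision and extra information ($H^0$ and $H^2$ come for free, and the paper's Lemma \ref{faltings} could be streamlined the same way); what it costs is that the entire content is pushed into the assertion that the Koszul complex computes continuous cohomology of $\mathrm{Gal}(\mathcal{M}_\infty/\mathcal{K}_\infty)\cong\Z_p^2$ on $\widehat{\mathcal{O}_\infty}T_1^{a_1}T_2^{a_2}$. That assertion is standard, but if you unwind its proof in degree one you find precisely the paper's argument: the surjectivity of evaluation-at-generators requires extending a cocycle from the dense subgroup $\Z^2$ to $\Z_p^2$ by continuity, and the convergence needed for this is guaranteed here because each generator acts through a finite quotient (multiplication by a $p$-power root of unity), which is exactly what the paper's explicit formulas $z(\sigma_{(1,0)}^n)=\frac{\zeta_{p^{n_1}}^{nr_1}-1}{\zeta_{p^{n_1}}^{r_1}-1}\,xT_1^{a_1}T_2^{a_2}$ exploit. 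So your proof is acceptable provided you either cite a reference covering continuous cohomology of $\Z_p^d$ with coefficients in a $p$-adically complete module with smooth action, or include that one-line convergence remark.
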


\begin{proof} 
Let $Z^1 \left( \mathcal{M}_\infty/\mathcal{K}_\infty, \widehat{\mathcal{O}_\infty}T_1^{a_1}T_2^{a_2} \right)$ 
be the group of continuous $1$-cocycles. By evaluating $1$-cocycles at $\sigma_{(1,0)}, \sigma_{(0,1)}$, we have 
$$
Z^1 \left( \mathcal{M}_\infty/\mathcal{K}_\infty, \widehat{\mathcal{O}_\infty}T_1^{a_1}T_2^{a_2} \right) 
\longrightarrow (\widehat{\mathcal{O}_\infty} T_1^{a_1}T_2^{a_2})^2 , \ 
z\mapsto \left(z\left(\sigma_{(1,0)}\right), z\left(\sigma_{(0,1)} \right) \right) 
$$ 
We claim that this evaluation map induces an isomorphism 
\begin{equation}\label{equaition:isomorphism_for_Z1}
Z^1 \left( \mathcal{M}_\infty/\mathcal{K}_\infty , \widehat{\mathcal{O}_\infty}T_1^{a_1}T_2^{a_2} \right) 
\overset{\sim}{\longrightarrow} 
\left\{ (x,y) \in \widehat{\mathcal{O}_\infty}^2, (\zeta_{p^{n_2}}^{r_2}-1)x=(\zeta_{p^{n_1}}^{r_1}-1)y\right\} . 
\end{equation} 
Let us denote the image of the above evaluation as 
\begin{equation}\label{equation:definition_of_x_and_y}
\left(z\left(\sigma_{(1,0)}\right), z\left(\sigma_{(0,1)} \right) \right) = 
\left(x   T_1^{a_1}T_2^{a_2}, y  T_1^{a_1}T_2^{a_2}\right).
\end{equation}
By the cocycle condition, we have 
$$
z\left(\sigma_{(1,0)}\sigma_{(0,1)}\right) 
= {\sigma_{(0,1)}}z\left(\sigma_{(1,0)}\right) + z\left(\sigma_{(0,1)}\right) 
= z\left(\sigma_{(1,0)}\right) + 
{\sigma_{(1,0)}}z\left(\sigma_{(0,1)}\right), 
$$
which implies that $x$ and $y$ given in \eqref{equation:definition_of_x_and_y} satisfy the relation 
$(\zeta_{p^{n_2}}^{r_2}-1)x=(\zeta_{p^{n_1}}^{r_1}-1)y$. The evaluation at $\sigma_{(1,0)}, \sigma_{(0,1)}$ 
defines a homomorphism of abelian groups 
\begin{equation*}
Z^1 \left( \mathcal{M}_\infty/\mathcal{K}_\infty , \widehat{\mathcal{O}_\infty}T_1^{a_1}T_2^{a_2} \right) 
\overset{\sim}{\longrightarrow} 
\left\{ (x,y) \in \widehat{\mathcal{O}_\infty}^2, (\zeta_{p^{n_2}}^{r_2}-1)x=(\zeta_{p^{n_1}}^{r_1}-1)y\right\} . 
\end{equation*} 
Since the subgroup generated by $\sigma_{(1,0)}, \sigma_{(0,1)}$ 
is dense in $\mathrm{Gal}( \mathcal{M}_\infty/\mathcal{K}_\infty)$, this homomorphism is injective. 
\par 
In order to prove that the homomorphism in question is surjective, we take 
any $(x,y) \in \widehat{\mathcal{O}_\infty}^2$ such that $(\zeta_{p^{n_2}}^{r_2}-1)x=(\zeta_{p^{n_1}}^{r_1}-1)y$ and  we try to show that 
there exists a unique continuous cocycle $z$ belonging to $Z^1 \left( \mathcal{M}_\infty/\mathcal{K}_\infty, \widehat{\mathcal{O}_\infty}T_1^{a_1}T_2^{a_2} \right)$ such that $z\left(\sigma_{(1,0)} \right)=x T_1^{a_1}T_2^{a_2}$ and $z \left( \sigma_{(0,1)} \right)=y T_1^{a_1}T_2^{a_2}$. Let $z$ be the continuous map $\mrm{Gal}(\mathcal{M}_\infty/\mathcal{K}_\infty) \rightarrow \widehat{\mathcal{O}_\infty}T_1^{a_1}T_2^{a_2}$ such that for any $n, m \in \Z$ we have
\begin{eqnarray*}
z \left(\sigma_{(1,0)}^n \right) &=& \left\{
    \begin{array}{ll}
        \sum_{k=0}^{n-1}\zeta_{p^{n_1}}^{k r_1} x T_1^{a_1}T_2^{a_2}  & \mbox{if } n \geq 0 \\
\\
         \sum_{k=1}^{-n}\zeta_{p^{n_1}}^{-k r_1} x T_1^{a_1}T_2^{a_2} & \mbox{if } n<0 \\
    \end{array}
\right. \\
&\!& \\
z \left(\sigma_{(0,1)}^{m} \right) &=& \left\{
    \begin{array}{ll}
        \sum_{k=0}^{m-1}\zeta_{p^{n_2}}^{k r_2} y T_1^{a_1}T_2^{a_2} & \mbox{if } m \geq 0 \\
\\
         \sum_{k=1}^{-m}\zeta_{p^{n_2}}^{-k r_2} y T_1^{a_1}T_2^{a_2} & \mbox{if } m <0 \\
    \end{array}
\right.\\
&\!& \\
z \left(\sigma_{(1,0)}^n\sigma_{(0,1)}^m \right) &=& z \left( \sigma_{(1,0)}^n \right)+\zeta_{p^{n_1}}^{n r_1} z \left( \sigma_{(0,1)}^m \right).
\end{eqnarray*}
To prove that $z$ is a cocycle, it is enough to prove that it satisfies the cocycle condition on the dense subgroup of $\mrm{Gal}(\mathcal{M}_\infty/\mathcal{K}_\infty)$ generated by $\sigma_{(1,0)}$ and $\sigma_{(0,1)}$. If $n, m \geq 0$ we have
\begin{eqnarray*}
z \left( \sigma_{(0,1)}^m  \sigma_{(1,0)}^n \right) &=& z \left(\sigma_{(1,0)}^n\sigma_{(0,1)}^m \right)\\
 &=& z \left( \sigma_{(1,0)}^n \right)+\zeta_{p^{n_1}}^{n r_1} z \left( \sigma_{(0,1)}^m \right)\\
 &=&  \left( \frac{\zeta_{p^{n_1}}^{n r_1}-1}{\zeta_{p^{n_1}}^{r_1}-1} x+\zeta_{p^{n_1}}^{n r_1} \frac{\zeta_{p^{n_2}}^{m r_2}-1}{\zeta_{p^{n_2}}^{r_2}-1} y \right)T_1^{a_1} T_2^{a_2}\\
&=& \left( \frac{\zeta_{p^{n_2}}^{m r_2}-1}{\zeta_{p^{n_2}}^{r_2}-1} y + \zeta_{p^{n_2}}^{m r_2} \frac{\zeta_{p^{n_1}}^{n r_1}-1}{\zeta_{p^{n_1}}^{r_1}-1} x \right) T_1^{a_1} T_2^{a_2}\\
&=& z \left( \sigma_{(0,1)}^m \right)+\zeta_{p^{n_2}}^{m r_2} z \left( \sigma_{(1,0)}^n \right)
\end{eqnarray*}
where the fourth equality follows from the equality $(\zeta_{p^{n_2}}^{r_2}-1)x=(\zeta_{p^{n_1}}^{r_1}-1)y$ by an easy calculation. The reader will prove similarly that 
$$
z \left( \sigma_{(0,1)}^m  \sigma_{(1,0)}^n \right)=z \left( \sigma_{(0,1)}^m \right)+ \zeta_{p^{n_2}}^{m r_2} z \left( \sigma_{(1,0)}^n \right)
$$
for any $n, m \in \Z$. Let $a, b, c, d \in \Z$. Write $\tau_1=\sigma_{(1,0)}^{a}\sigma_{(0,1)}^{b}$ and $\tau_2=\sigma_{(1,0)}^{c} \sigma_{(0,1)}^{d}$. By what we have just proved we have
\begin{eqnarray*}
z \left( \tau_1 \tau_2 \right) &=& z \left( \sigma_{(1,0)}^{a+c} \sigma_{(0,1)}^{b+d}\right)\\
&=& z \left( \sigma_{(1,0)}^{a} \right)+ \zeta_{p^{n_1}}^{a r_1}  z \left( \sigma_{(1,0)}^{c} \right)+\zeta_{p^{n_1}}^{(a+c)r_1} \left( z \left( \sigma_{(0,1)}^{b} \right) + \zeta_{p^{n_2}}^{b r_2} z \left( \sigma_{(0,1)}^d \right) \right)
\end{eqnarray*}
and by definition of $z$ we have
\begin{eqnarray*}
z \left( \tau_1 \right)+\tau_1 z \left( \tau_2 \right) &=&  z \left( \sigma_{(1,0)}^{a} \right)+ \zeta_{p^{n_1}}^{a r_1}  z \left( \sigma_{(0,1)}^{b} \right)+\zeta_{p^{n_1}}^{a r_1} \zeta_{p^{n_2}}^{b r_2} \left( z \left( \sigma_{(1,0)}^{c} \right)+ \zeta_{p^{n_1}}^{c r_1 }z \left( \sigma_{(0,1)}^{d} \right)\right).
\end{eqnarray*}
Hence
\begin{eqnarray*}
z \left( \tau_1 \tau_2 \right)- \left( z \left( \tau_1 \right)+\tau_1 z \left( \tau_2 \right) \right) &=&  \zeta_{p^{n_1}}^{a r_1} \left( (1-\zeta_{p^{n_2}}^{b r_2})z\left( \sigma_{(1,0)}^c \right)+ (\zeta_{p^{n_1}}^{c r_1}-1)  z \left( \sigma_{(0,1)}^{b} \right)\right)=0.
\end{eqnarray*}
where the last equality follows again from $(\zeta_{p^{n_2}}^{r_2}-1)x=(\zeta_{p^{n_1}}^{r_1}-1)y$ by an easy calculation. This proves that $z$ is a cocycle, this completes the proof of the surjectivity of \eqref{equaition:isomorphism_for_Z1}. 
\par 
Let $B^1 \left( \mathcal{M}_\infty/\mathcal{K}_\infty, \widehat{\mathcal{O}_\infty}T_1^{a_1}T_2^{a_2} \right)$ 
be the group of $1$-boundary elements. We claim that 
\begin{equation}\label{equaition:isomorphism_for_B1}
B^1 \left( \mathcal{M}_\infty/\mathcal{K}_\infty , \widehat{\mathcal{O}_\infty}T_1^{a_1}T_2^{a_2} \right) 
\overset{\sim}{\longrightarrow} 
\left( (\zeta_{p^{n_1}}^{r_1}-1)u, (\zeta_{p^{n_2}}^{r_2}-1)u\right), u \in \widehat{\mathcal{O}_\infty}  . 
\end{equation} 
By evaluating 
$w \in B^1 \left( \mathcal{M}_\infty/\mathcal{K}_\infty, \widehat{\mathcal{O}_\infty}T_1^{a_1}T_2^{a_2} \right)$ at 
$\sigma_{(1,0)}, \sigma_{(0,1)}$, 
we have $w\left( \sigma_{(1,0)} \right)=(\zeta_{p^{n_1}}^{r_1}-1) u$ and $w\left( \sigma_{(0,1)} \right)=(\zeta_{p^{n_2}}^{r_2}-1) u$ for some $u \in \widehat{\mathcal{O}_\infty}$. It is straightforward to check that this gives an isomorphism \eqref{equaition:isomorphism_for_B1}. 
This concludes the proof.
\end{proof}

The proof of the following Lemma follows from a statement of \cite[\S 4. (c)]{faltings}. As this statement is not proved, we give a proof below.

\begin{lem} \label{faltings} The morphism $a: \widehat{\mathcal{O}_\infty}^2 \rightarrow H^1 \left( \mathcal{M}_\infty/\mathcal{K}_\infty, \widehat{\mathcal{B}_\infty}(1) \right)$ defined as
$$
\widehat{\mathcal{O}_\infty}^2 \simeq \mrm{Hom}\left( \Z_p(1)^2, \widehat{\mathcal{O}_\infty}(1) \right) \simeq H^1 \left( \mathcal{M}_\infty/\mathcal{K}_\infty, \widehat{\mathcal{O}_\infty}(1) \right) \rightarrow H^1 \left( \mathcal{M}_\infty/\mathcal{K}_\infty, \widehat{\mathcal{B}_\infty}(1) \right)
$$ 
where the last morphism is induced by the inclusion $\widehat{\mathcal{O}_\infty}(1) \rightarrow \widehat{\mathcal{B}_\infty}(1)$, is injective and its cokernel is killed by $(\zeta_p-1)^2$.
\end{lem}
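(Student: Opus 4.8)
The plan is to compute $H^1(\mathcal{M}_\infty/\mathcal{K}_\infty, \widehat{\mathcal{B}_\infty}(1))$ explicitly by the method of Lemma~\ref{colmez}, and to identify the image of $a$ with the part of this cohomology supported on the trivial character. First I would note that $G := \Gal(\mathcal{M}_\infty/\mathcal{K}_\infty)$ fixes $\mathcal{K}_\infty = \mathcal{K}\overline{K} \supseteq \mu_{p^\infty}$, so it acts trivially both on $\widehat{\mathcal{O}_\infty}$ and on $\Z_p(1)$; consequently the Tate twist is harmless and the $G$-action on the free $\widehat{\mathcal{O}_\infty}$-module $\widehat{\mathcal{B}_\infty}$, with completed basis $\{e_a = T_1^{a_1}T_2^{a_2}\}_{a_1,a_2 \in \Q_p/\Z_p}$, is $\widehat{\mathcal{O}_\infty}$-linear and diagonal: writing $a_i = r_i/p^{n_i}$, the generator $\sigma_{(1,0)}$ (resp. $\sigma_{(0,1)}$) scales $e_a$ by $\zeta_{p^{n_1}}^{r_1}$ (resp. $\zeta_{p^{n_2}}^{r_2}$). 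By the same argument as in the proof of Lemma~\ref{colmez} (evaluation at the dense subgroup generated by $\sigma_{(1,0)}, \sigma_{(0,1)}$), a class in $H^1(\mathcal{M}_\infty/\mathcal{K}_\infty, \widehat{\mathcal{B}_\infty}(1))$ is represented by a pair $(x,y) \in \widehat{\mathcal{B}_\infty}^2$, whose $e_a$-components $x_a, y_a$ satisfy $(\zeta_{p^{n_2}}^{r_2}-1)x_a = (\zeta_{p^{n_1}}^{r_1}-1)y_a$, modulo coboundaries whose $e_a$-components are $\bigl((\zeta_{p^{n_1}}^{r_1}-1)u_a,\,(\zeta_{p^{n_2}}^{r_2}-1)u_a\bigr)$ for $u = \sum_a u_a e_a \in \widehat{\mathcal{B}_\infty}$.

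For injectivity, I would observe that the image of $a$ is exactly the set of classes with $x, y \in \widehat{\mathcal{B}_\infty}^{G} = \widehat{\mathcal{O}_\infty}\,e_0$, that is the summand indexed by $(a_1,a_2) = (0,0)$. For this summand Lemma~\ref{colmez} (with $r_1 = r_2 = 0$) gives $H^1 = \widehat{\mathcal{O}_\infty}^2$ with vanishing coboundary group, which is precisely the source of $a$. Since the $e_0$-component of every coboundary vanishes, $(\widehat{\mathcal{B}_\infty}^G)^2$ meets the coboundaries only in $0$, and $a$ is injective.

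For the cokernel I would set, for each $a \neq (0,0)$, $\lambda_1 = \zeta_{p^{n_1}}^{r_1}-1$ and $\lambda_2 = \zeta_{p^{n_2}}^{r_2}-1$, so that the cocycle relation reads $\lambda_2 x_a = \lambda_1 y_a$. The goal is to show that $(\zeta_p-1)^2(x,y)$ differs by a coboundary from the class of $(\zeta_p-1)^2(x_0 e_0, y_0 e_0)$, which lies in the image of $a$. Concretely, for each $a \neq (0,0)$ I solve $\lambda_1 u_a = (\zeta_p-1)^2 x_a$ and $\lambda_2 u_a = (\zeta_p-1)^2 y_a$ simultaneously, dividing by whichever of $\lambda_1, \lambda_2$ has the smaller valuation (one of them may be $0$, in which case the cocycle relation forces the corresponding coordinate of $(x_a,y_a)$ to vanish). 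The cocycle relation guarantees that the two equations are consistent, and the divisibility $\zeta_{p^n}-1 \mid \zeta_p - 1$ for $n \geq 1$ — a valuation comparison in the valuation ring $\mathcal{O}_\infty$, exactly as in Lemma~\ref{different-generator} — makes each $u_a$ integral. Setting $u_0 = 0$, the element $u = \sum_a u_a e_a$ then witnesses that $(\zeta_p-1)^2(x,y)$ is cohomologous to a class in the image of $a$, so $\mathrm{coker}(a)$ is killed by $(\zeta_p-1)^2$.

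The main obstacle is the convergence point hidden in the last step: one must verify that $u = \sum_a u_a e_a$ genuinely lies in the completed module $\widehat{\mathcal{B}_\infty}$, i.e.\ that $u_a \to 0$ $p$-adically. This is where having two factors of $(\zeta_p-1)$ is comfortable: normalizing $v(p)=1$ one has $v(\zeta_{p^n}-1) = 1/(p^{n-1}(p-1)) \leq 1/(p-1)$, so $v\bigl((\zeta_p-1)^2/(\zeta_{p^n}-1)\bigr) \geq 1/(p-1) > 0$ uniformly in $n \geq 1$; hence $v(u_a) \geq v(x_a)$ (or $v(y_a)$) for every $a$, and the convergence of $x$ and $y$ in $\widehat{\mathcal{B}_\infty}$ forces that of $u$. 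A secondary point, already implicit in Lemma~\ref{colmez} and to be invoked for the non-finitely generated complete module $\widehat{\mathcal{B}_\infty}(1)$, is that the continuous cohomology $H^1(G,-)$ is computed by the evaluation-at-generators description, which holds because the subgroup generated by $\sigma_{(1,0)},\sigma_{(0,1)}$ is dense in $G$.
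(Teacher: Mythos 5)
Your proposal is correct and follows essentially the same route as the paper: both arguments decompose $\widehat{\mathcal{B}_\infty}$ into the rank-one eigenspaces $\widehat{\mathcal{O}_\infty}T_1^{a_1}T_2^{a_2}$, identify the image of $a$ with the $(0,0)$-component via the cocycle description of Lemma \ref{colmez}, and kill the other components using the divisibility of $\zeta_p-1$ by $\zeta_{p^{n_i}}^{r_i}-1$ (the paper's element $u=d_1d_2(\zeta_{p^{n_2}}^{r_2}-1)x$ is exactly your $u_a$). The only difference is presentational: the paper asserts that $H^1$ splits as a topological direct sum and treats each summand separately, whereas you assemble a single global coboundary and verify its convergence in $\widehat{\mathcal{B}_\infty}$ via the uniform lower bound $v\bigl((\zeta_p-1)^2/(\zeta_{p^{n}}^{r}-1)\bigr)\geq 1/(p-1)$ --- a point the paper leaves implicit and which your write-up makes explicit.
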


\begin{proof} The $\widehat{\mathcal{O}_\infty}$-module $\widehat{\mathcal{B}_\infty}$ is the topological direct sum of the free $\widehat{\mathcal{O}_\infty}$-modules $\widehat{\mathcal{O}_\infty}T_1^{a_1}T_2^{a_2}$ with $a_1, a_2 \in \Q_p/\Z_p$. The Galois group $\mrm{Gal}(\mathcal{M}_\infty/\mathcal{K}_\infty)$ acts trivially on $\widehat{\mathcal{O}_\infty}$. For any $a_1, a_2 \in \Q_p/\Z_p$, write $a_i=r_i/p^{n_i}$ with $r_i, n_i  \in \Z$, $n_i \geq 0$. Then the topological generator $\sigma_{(1,0)}$ of the first factor of $\mrm{Gal}(\mathcal{M}_\infty/\mathcal{K}_\infty)$, resp. $\sigma_{(0,1)}$ of the second factor of $\mrm{Gal}(\mathcal{M}_\infty/\mathcal{K}_\infty)$ acts by multiplication by $\zeta_{p^{n_1}}^{r_1}$, resp. $\zeta_{p^{n_2}}^{r_2}$, on $T_1^{a_1}T_2^{a_2}$. As a consequence $H^1 \left( \mathcal{M}_\infty/\mathcal{K}_\infty, \widehat{\mathcal{B}_\infty}(1) \right)$ is the topological direct sum of the $H^1 \left( \mathcal{M}_\infty/\mathcal{K}_\infty, \widehat{\mathcal{O}_\infty}T_1^{a_1}T_2^{a_2} \right)$ with $a_1, a_2 \in \Q_p/\Z_p$. We compute the $H^1 \left( \mathcal{M}_\infty/\mathcal{K}_\infty, \widehat{\mathcal{O}_\infty}T_1^{a_1}T_2^{a_2} \right)$ as follows.  According to Lemma \ref{colmez}, this group is isomorphic to 
$
\dfrac{\left\{ (x,y) \in \widehat{\mathcal{O}_\infty}^2, (\zeta_{p^{n_2}}^{r_2}-1)x=(\zeta_{p^{n_1}}^{r_1}-1)y\right\}}{\left\{ \left( (\zeta_{p^{n_1}}^{r_1}-1)z, (\zeta_{p^{n_2}}^{r_2}-1)z\right), z \in \widehat{\mathcal{O}_\infty} \right\}}
$
where we have written $a_i=r_i/p^{n_i}$. For $a_1=a_2=0$ this cohomology is isomorphic to $\widehat{\mathcal{O}_\infty}^2$ and it is clear that the inclusion 
$
\widehat{\mathcal{O}_\infty}^2 \hookrightarrow H^1 \left( \mathcal{M}_\infty/\mathcal{K}_\infty, \widehat{\mathcal{B}_\infty}(1) \right)
$
that we obtain is the one defined in the statement of the Lemma. Assume that $a_1=0$ and $a_2 \neq 0$. Then $$
H^1 \left( \mathcal{M}_\infty/\mathcal{K}_\infty, \widehat{\mathcal{O}_\infty}T_1^{a_1}T_2^{a_2} \right) \simeq \widehat{\mathcal{O}_\infty}/(\zeta_{p^{n_2}}^{r_2}-1)\widehat{\mathcal{O}_\infty}
$$
and as $\zeta_{p^{n_2}}^{r_2}-1$ divides $\zeta_p-1$ in $\widehat{\mathcal{O}_\infty}$, this cohomology group is killed by $\zeta_p-1$. Similarly, if $a_1 \neq 0$ and $a_2 = 0$, the corresponding cohomology group is killed by $\zeta_p-1$. Assume that $a_1 \neq 0$ and $a_2 \neq 0$. There exist $d_1, d_2 \in \widehat{\mathcal{O}_\infty}$ such that $(\zeta_{p^{n_1}}^{r_1}-1)d_1=(\zeta_{p^{n_2}}^{r_2}-1)d_2=\zeta_p-1$. Let $(x,y) \in \widehat{\mathcal{O}_\infty}^2$ representing an element of $
H^1 \left( \mathcal{M}_\infty/\mathcal{K}_\infty, \widehat{\mathcal{O}_\infty}T_1^{a_1}T_2^{a_2} \right)
$. Write $u=d_1 d_2(\zeta_{p^{n_2}}^{r_2}-1)x=d_1 d_2(\zeta_{p^{n_1}}^{r_1}-1)y$. Then 
$
(\zeta_p-1)^2 x=(\zeta_{p^{n_1}}^{r_1}-1)u= (\zeta_{p^{n_2}}^{r_2}-1) u=(\zeta_p-1)^2 y.
$
Hence $H^1 \left( \mathcal{M}_\infty/\mathcal{K}_\infty, \widehat{\mathcal{O}_\infty}T_1^{a_1}T_2^{a_2} \right)$ is killed by $(\zeta_p-1)^2$ and the proof is complete.
\end{proof}

\begin{lem} \label{killed1} The morphism of $\widehat{\mathcal{O}_\infty}$-modules $c: 
\widehat{\mathcal{O}_\infty}^2 \rightarrow \widehat{\Omega}^1_{\mathcal{O}/\mathfrak{o}} \otimes_{\mathcal{O}} \widehat{\mathcal{O}_\infty}
$
defined by $(1,0) \mapsto d\log T_1$ and $(0,1) \mapsto d\log T_2$ is injective and its cokernel is killed by a power of $p$.
\end{lem}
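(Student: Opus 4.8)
The plan is to descend the statement to the $\mathcal{O}$-linear map
$$
\phi \colon \mathcal{O}^2 \longrightarrow \widehat{\Omega}^1_{\mathcal{O}/\mathfrak{o}}, \qquad (1,0) \mapsto d\log T_1, \quad (0,1) \mapsto d\log T_2,
$$
of which $c$ is the base change $\phi \otimes_{\mathcal{O}} \widehat{\mathcal{O}_\infty}$ under the identification $\mathcal{O}^2 \otimes_{\mathcal{O}} \widehat{\mathcal{O}_\infty} = \widehat{\mathcal{O}_\infty}^2$. Since $T_1, T_2 \in \mathcal{O}^\times$, the forms $d\log T_i$ genuinely lie in $\widehat{\Omega}^1_{\mathcal{O}/\mathfrak{o}}$, so $\phi$ is well defined. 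By Proposition \ref{structure-diff}(i) the target is finitely generated over the complete discrete valuation ring $\mathcal{O}$, and by Proposition \ref{structure-diff}(ii) the base change $\phi \otimes_{\mathcal{O}} \mathcal{K}$ is an isomorphism of two-dimensional $\mathcal{K}$-vector spaces. As $\mathcal{K} = \Frac(\mathcal{O})$ is flat over $\mathcal{O}$ and $\mathcal{O}^2$ is torsion free, $\ker \phi$ is torsion and hence zero, so $\phi$ is injective; and $\coker \phi$ is a finitely generated torsion module over the discrete valuation ring $\mathcal{O}$, hence of finite length and killed by $p^c$ for some integer $c \geq 0$.

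First I would record the formal consequence that, as $\phi$ is injective with $p^c \cdot \widehat{\Omega}^1_{\mathcal{O}/\mathfrak{o}} \subseteq \phi(\mathcal{O}^2)$, there is a unique $\mathcal{O}$-linear map $\psi \colon \widehat{\Omega}^1_{\mathcal{O}/\mathfrak{o}} \to \mathcal{O}^2$ with $\phi \circ \psi = p^c \cdot \id$, and then automatically $\psi \circ \phi = p^c \cdot \id$ (precompose with $\phi$ and cancel the injective $\phi$). Tensoring both identities with $\widehat{\mathcal{O}_\infty}$ and writing $\overline{\psi} = \psi \otimes \id$ gives $c \circ \overline{\psi} = p^c$ and $\overline{\psi} \circ c = p^c$. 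The first identity shows at once that $p^c$ kills $\coker c$, which is the second assertion of the Lemma. The second identity shows that $\ker c$ is contained in the $p^c$-torsion submodule of $\widehat{\mathcal{O}_\infty}^2$.

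It therefore remains to prove that $\widehat{\mathcal{O}_\infty}$ has no nonzero $p$-power torsion, and this is the one real point of the argument. Here I would use that $\mathcal{O}_\infty$ is the valuation ring of $\mathcal{K}_\infty$, in particular an integral domain. Given $\xi = (x_n)_n$ in $\widehat{\mathcal{O}_\infty} = \underleftarrow{\lim}_n \mathcal{O}_\infty/p^n\mathcal{O}_\infty$ with $p\xi = 0$, lift $x_n$ to $\widetilde{x}_n \in \mathcal{O}_\infty$; then $p\widetilde{x}_n \in p^n\mathcal{O}_\infty$, and cancelling $p$ in the domain $\mathcal{O}_\infty$ yields $\widetilde{x}_n \in p^{n-1}\mathcal{O}_\infty$. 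Applying this at level $n+1$, the class $x_{n+1}$ lies in $p^n\mathcal{O}_\infty/p^{n+1}\mathcal{O}_\infty$ and so maps to $0$ in $\mathcal{O}_\infty/p^n\mathcal{O}_\infty$; hence $x_n = 0$ for all $n$ and $\xi = 0$. An immediate induction upgrades this to $\widehat{\mathcal{O}_\infty}[p^c] = 0$. Combined with the previous paragraph this gives $\ker c = 0$, completing the proof.
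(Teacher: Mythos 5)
Your proof is correct. The first half (the analysis of $\phi\colon \mathcal{O}^2 \to \widehat{\Omega}^1_{\mathcal{O}/\mathfrak{o}}$) is essentially the paper's argument: both rest on Proposition \ref{structure-diff}, the paper invoking the structure theorem over the discrete valuation ring $\mathcal{O}$ to write $\widehat{\Omega}^1_{\mathcal{O}/\mathfrak{o}} \simeq \mathcal{O}^2 \oplus \bigoplus_k \mathcal{O}/(\varpi)^{i_k}$ with the free part identified via $d\log T_1, d\log T_2$, and you extracting the same information (injectivity plus finite-length cokernel) without naming the decomposition. Where you genuinely diverge is the descent to $\widehat{\mathcal{O}_\infty}$: the paper simply tensors the decomposition with $\widehat{\mathcal{O}_\infty}$, \emph{asserting} that $\widehat{\mathcal{O}_\infty}$ is flat over $\mathcal{O}$ (and then uses formal smoothness of $\mathcal{O}/\mathfrak{o}$ to arrange $\varpi=\varpi_K$ and $(p)=(\varpi_K)^e$ so that the torsion summands are $p$-power torsion), whereas you construct a two-sided quasi-inverse $\psi$ with $\phi\psi=\psi\phi=p^c$ and reduce everything to the $p$-power-torsion-freeness of $\widehat{\mathcal{O}_\infty}$, which you verify by hand from the definition of the $p$-adic completion of a domain. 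This buys you something: the flatness of the (non-finitely-generated, completed) module $\widehat{\mathcal{O}_\infty}$ over $\mathcal{O}$ is exactly equivalent to its $\varpi$-torsion-freeness, so the paper's unproved flatness claim secretly requires the same torsion-freeness check you carry out explicitly; your route makes the proof self-contained and also dispenses with the digression through \cite[(28.G)]{matsumura}. The only point worth making explicit in your write-up is that a finite-length $\mathcal{O}$-module is killed by a power of $p$ (not merely of $\varpi$) because $p$ lies in the maximal ideal of $\mathcal{O}$ --- immediate here since $\mathcal{O}$ is the $p$-adic completion of the local ring at a point of the special fibre --- but this is a one-line remark, not a gap.
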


\begin{proof} According to Proposition \ref{structure-diff} and the structure theorem for finitely generated modules over a principal ideal domain, the morphism of $\mathcal{O}$-modules
$\mathcal{O}^2 \rightarrow \widehat{\Omega}^1_{\mathcal{O}/\mathfrak{o}}$ defined by $(1,0) \mapsto d\log T_1$ and $(0,1) \mapsto d\log T_2$ induces an isomorphism
$\widehat{\Omega}^1_{\mathcal{O}/\mathfrak{o}} \simeq \mathcal{O}^2 \oplus  \bigoplus_{k} \widehat{\mathcal{O}}/(\varpi)^{i_k}$ where $\varpi$ is a uniformizer of the discrete valuation ring $\mathcal{O}$. By tensoring over $\mathcal{O}$ with $\widehat{\mathcal{O}_\infty}$, which is flat over $\mathcal{O}$, we obtain the isomorphism $\widehat{\Omega}^1_{\mathcal{O}/\mathfrak{o}} \otimes_{\mathcal{O}} \widehat{\mathcal{O}_\infty} \simeq \widehat{\mathcal{O}_\infty}^2 \oplus \bigoplus_{k} \widehat{\mathcal{O}_\infty}/(\varpi)^{i_k}$. In fact, because $\mathcal{O}/\mathfrak{o}$ is formally smooth, any uniformizer $\varpi_K$ of $K$ remains a uniformizer in $\mathcal{O}$ (see \cite[(28.G)]{matsumura} and \cite[Theorem 62 and 82]{matsumura}). Hence, we can assume that $\varpi=\varpi_K$. As $K/\Q_p$ is a finite extension, we have $(p)=(\varpi_K)^e$ and the statement follows.
\end{proof}

\begin{lem} \label{scholl3.4.12} Let $\mathfrak{m}_\infty \subset \mathcal{O}_\infty$ denote the maximal ideal. For any integer $j \geq 0$, the kernel and the cokernel of the inflation morphism
$$
H^j\left(\mathcal{M}_\infty/\mathcal{K}_\infty, \widehat{\mathcal{B}_\infty}(1) \right)= H^j\left( \mathcal{M}_\infty/\mathcal{K}_\infty, H^0\left( \mathcal{M}_\infty, \widehat{\overline{\mathcal{O}}}(1) \right) \right) \rightarrow H^j\left( \mathcal{K}_\infty, \widehat{\overline{\mathcal{O}}}(1) \right)
$$ 
are killed by $\mathfrak{m}_\infty$.
\end{lem}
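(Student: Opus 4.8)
The plan is to relate the Galois cohomology of $\mathcal{K}_\infty$ to that of $\mathcal{M}_\infty$ through the Hochschild--Serre spectral sequence for the tower $\mathcal{K}_\infty \subset \mathcal{M}_\infty \subset \overline{\mathcal{K}}$, and to control the difference by Faltings' almost purity theorem. Since $\mathcal{M}_\infty$ and $\mathcal{K}_\infty$ have the same algebraic closure $\overline{\mathcal{K}}$ and $\mathcal{M}_\infty/\mathcal{K}_\infty$ is Galois with group $Q=\Gal(\mathcal{M}_\infty/\mathcal{K}_\infty)\simeq \Z_p(1)^2$ as recorded above, writing $N=\Gal(\overline{\mathcal{K}}/\mathcal{M}_\infty)$ we obtain a spectral sequence
$$
E_2^{p,q}=H^p\!\left(Q, H^q\!\left(\mathcal{M}_\infty, \widehat{\overline{\mathcal{O}}}(1)\right)\right) \Longrightarrow H^{p+q}\!\left(\mathcal{K}_\infty, \widehat{\overline{\mathcal{O}}}(1)\right),
$$
whose edge morphism $E_2^{j,0}\to H^j(\mathcal{K}_\infty, \widehat{\overline{\mathcal{O}}}(1))$ is precisely the inflation map of the statement, once one uses the identification $H^0(\mathcal{M}_\infty, \widehat{\overline{\mathcal{O}}}(1))=\widehat{\mathcal{B}_\infty}(1)$.

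The key input I would establish is that $H^q(\mathcal{M}_\infty, \widehat{\overline{\mathcal{O}}}(1))$ is killed by $\mathfrak{m}_\infty$ for every $q\geq 1$. Because $\mathcal{M}_\infty$ contains all $p$-power roots of unity, the group $N$ acts trivially on $\Z_p(1)$, so $\widehat{\overline{\mathcal{O}}}(1)\simeq \widehat{\overline{\mathcal{O}}}$ as $N$-modules and it suffices to treat untwisted coefficients. As $\mathcal{M}_\infty$ is obtained from $\mathcal{K}$ by adjoining all $p$-power roots of the units $T_1, T_2$ lifting a $p$-basis of the residue field as well as all $p$-power roots of unity, it is deeply ramified in every residual and cyclotomic direction; Faltings' almost purity theorem \cite{faltings} then gives that $H^q(\mathcal{M}_\infty, \widehat{\overline{\mathcal{O}}})$ is almost zero, i.e. killed by $\mathfrak{m}_\infty$, for all $q\geq 1$. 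Since $Q$ fixes $\mathcal{O}_\infty$ pointwise, its action on the coefficients commutes with multiplication by $\mathfrak{m}_\infty$, so $H^p(Q, -)$ carries modules killed by $\mathfrak{m}_\infty$ to modules killed by $\mathfrak{m}_\infty$; hence $E_2^{p,q}$ is killed by $\mathfrak{m}_\infty$ for all $q\geq 1$.

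It then remains to read the kernel and cokernel of the edge map off the spectral sequence. The edge morphism factors as $E_2^{j,0}\twoheadrightarrow E_\infty^{j,0}\hookrightarrow H^j(\mathcal{K}_\infty, \widehat{\overline{\mathcal{O}}}(1))$. The kernel of the surjection is the cumulative image of the differentials $d_r\colon E_r^{j-r,\,r-1}\to E_r^{j,0}$ for $r\geq 2$, whose sources are subquotients of the $E_2^{j-r,\,r-1}$ with second index $r-1\geq 1$, hence killed by $\mathfrak{m}_\infty$; the cokernel of the inclusion is filtered by the $E_\infty^{p,\,j-p}$ with $p<j$, each a subquotient of $E_2^{p,\,j-p}$ with $j-p\geq 1$, hence also killed by $\mathfrak{m}_\infty$. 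A priori this yields only annihilation by a power of $\mathfrak{m}_\infty$; but $\mathcal{K}_\infty\supseteq \overline{K}=\overline{\Q}_p$ has value group containing $\Q$, hence dense, so $\mathfrak{m}_\infty^2=\mathfrak{m}_\infty$ and annihilation by a power of $\mathfrak{m}_\infty$ is annihilation by $\mathfrak{m}_\infty$. This gives the claim for all $j\geq 1$; for $j=0$ the edge map is an isomorphism.

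The main obstacle is the almost vanishing of the higher cohomology $H^q(\mathcal{M}_\infty, \widehat{\overline{\mathcal{O}}}(1))$ for $q\geq 1$: this is the deep almost-\'etale input of Faltings, and the point requiring care is that $\mathcal{M}_\infty$ must be deeply ramified simultaneously in the two residual directions $T_1, T_2$ and in the cyclotomic direction. Everything else is formal manipulation of the Hochschild--Serre spectral sequence together with the idempotency $\mathfrak{m}_\infty^2=\mathfrak{m}_\infty$.
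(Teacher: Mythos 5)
Your argument is correct and is essentially the paper's own proof, which simply defers to the first part of Scholl's proof of Proposition 3.4.12: the key input in both cases is Faltings' almost purity theorem, giving that $H^q(\mathcal{M}_\infty, \widehat{\overline{\mathcal{O}}})$ is killed by $\mathfrak{m}_\infty$ for $q \geq 1$, combined with the Hochschild--Serre spectral sequence for $\mathcal{K}_\infty \subset \mathcal{M}_\infty \subset \overline{\mathcal{K}}$ and the idempotency of $\mathfrak{m}_\infty$. Your write-up just makes explicit the formal spectral-sequence bookkeeping that Scholl's cited argument contains.
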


\begin{proof} The statement follows from the first part of the proof of \cite[Proposition 3.4.12]{scholl}. Note that this part of the proof holds for any big local field. 
\end{proof}

Let $\widehat{\mathfrak{a}}_{\overline{\mathcal{O}}/\mathfrak{o}}$ denote the fractional ideal $(\zeta_p-1)^{-1}\mathfrak{d}^{-1}\widehat{\overline{\mathcal{O}}} \subset \widehat{\overline{\mathcal{L}}}$, where $\mathfrak{d}$ is the different of $\mathfrak{o}/\Z_p$. As $\mathfrak{d}$ is a principal ideal (\cite[III \S 6 Corollaire 2]{serre}), we have a canonical isomorphism $\widehat{\overline{\mathcal{O}}}(1) \overset{\sim}{\rightarrow} \widehat{\mathfrak{a}}_{\overline{\mathcal{O}}/\mathfrak{o}}(1)$. Let $$
b: H^1\left(\mathcal{M}_\infty/\mathcal{K}_\infty, \widehat{\mathcal{B}_\infty}(1) \right) \rightarrow H^1\left( \mathcal{K}_\infty, T_p\widehat{\Omega}^1_{\overline{\mathcal{O}}/\mathfrak{o}} \right)
$$ 
denote the morphism defined as the composite of the inflation 
$$
H^1\left(\mathcal{M}_\infty/\mathcal{K}_\infty, \widehat{\mathcal{B}_\infty}(1) \right)= H^1\left( \mathcal{M}_\infty/\mathcal{K}_\infty, H^0\left( \mathcal{M}_\infty, \widehat{\overline{\mathcal{O}}}(1) \right) \right) \rightarrow H^1\left( \mathcal{K}_\infty, \widehat{\overline{\mathcal{O}}}(1) \right)
$$
and of the isomorphism induced by $\widehat{\overline{\mathcal{O}}}(1) \overset{\sim}{\rightarrow} \widehat{\mathfrak{a}}_{\overline{\mathcal{O}}/\mathfrak{o}}(1) \overset{\sim}{\rightarrow} T_p\widehat{\Omega}^1_{\overline{\mathcal{O}}/\mathfrak{o}}$ where $\widehat{\mathfrak{a}}_{\overline{\mathcal{O}}/\mathfrak{o}}(1) \overset{\sim}{\rightarrow} T_p\widehat{\Omega}^1_{\overline{\mathcal{O}}/\mathfrak{o}}$ is the isomorphism \cite[(3.4.7)]{scholl}. 

\begin{cor} \label{b} The kernel and the cokernel of $b$ are killed by $p$.
\end{cor}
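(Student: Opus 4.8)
The plan is to reduce the statement entirely to Lemma \ref{scholl3.4.12}. First I would unwind the definition of $b$: by construction it is the composite of the inflation morphism
$$
H^1\left(\mathcal{M}_\infty/\mathcal{K}_\infty, \widehat{\mathcal{B}_\infty}(1) \right) = H^1\left( \mathcal{M}_\infty/\mathcal{K}_\infty, H^0\left( \mathcal{M}_\infty, \widehat{\overline{\mathcal{O}}}(1) \right) \right) \rightarrow H^1\left( \mathcal{K}_\infty, \widehat{\overline{\mathcal{O}}}(1) \right)
$$
with the map $H^1\left( \mathcal{K}_\infty, \widehat{\overline{\mathcal{O}}}(1) \right) \rightarrow H^1\left( \mathcal{K}_\infty, T_p\widehat{\Omega}^1_{\overline{\mathcal{O}}/\mathfrak{o}} \right)$ obtained by functoriality of Galois cohomology from the chain of coefficient isomorphisms $\widehat{\overline{\mathcal{O}}}(1) \overset{\sim}{\rightarrow} \widehat{\mathfrak{a}}_{\overline{\mathcal{O}}/\mathfrak{o}}(1) \overset{\sim}{\rightarrow} T_p\widehat{\Omega}^1_{\overline{\mathcal{O}}/\mathfrak{o}}$. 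Since each of these coefficient maps is an isomorphism of $\mathrm{Gal}(\overline{\mathcal{L}}/\mathcal{K}_\infty)$-modules, the induced map on $\mathcal{K}_\infty$-cohomology is an isomorphism, hence has trivial kernel and cokernel. Consequently the kernel and cokernel of $b$ coincide with those of the inflation morphism above.

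Next I would simply invoke Lemma \ref{scholl3.4.12} in the case $j=1$, which asserts precisely that the kernel and cokernel of this inflation morphism are killed by the maximal ideal $\mathfrak{m}_\infty$ of $\mathcal{O}_\infty$. Finally, since $\mathcal{O}_\infty$ is the valuation ring of the field $\mathcal{K}_\infty$, which has residue characteristic $p$, the element $p$ lies in $\mathfrak{m}_\infty$. Therefore any $\widehat{\mathcal{O}_\infty}$-module killed by $\mathfrak{m}_\infty$ is in particular killed by $p$, and the corollary follows.

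I do not expect a genuine obstacle here: the entire analytic content sits in Lemma \ref{scholl3.4.12}, and the corollary is its specialization to $j=1$ combined with the elementary inclusion $p \in \mathfrak{m}_\infty$. The only point deserving a word of care is the verification that the coefficient comparison $\widehat{\overline{\mathcal{O}}}(1) \overset{\sim}{\rightarrow} T_p\widehat{\Omega}^1_{\overline{\mathcal{O}}/\mathfrak{o}}$ is Galois-equivariant, so that it indeed induces an isomorphism on $\mathcal{K}_\infty$-cohomology rather than merely an abstract isomorphism of abelian groups; this equivariance is built into the isomorphism \cite[(3.4.7)]{scholl} used to define the second factor of $b$, and the principality of $\mathfrak{d}$ (\cite[III \S 6 Corollaire 2]{serre}) guarantees that the intermediate twist $\widehat{\overline{\mathcal{O}}}(1)\overset{\sim}{\rightarrow}\widehat{\mathfrak{a}}_{\overline{\mathcal{O}}/\mathfrak{o}}(1)$ is likewise canonical and equivariant.
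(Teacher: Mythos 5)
Your proposal is correct and follows exactly the paper's own (one-line) argument: $b$ is the inflation map of Lemma \ref{scholl3.4.12} with $j=1$ post-composed with a coefficient isomorphism, and $p\in\mathfrak{m}_\infty$. Your extra care about the Galois-equivariance of $\widehat{\overline{\mathcal{O}}}(1)\overset{\sim}{\rightarrow}T_p\widehat{\Omega}^1_{\overline{\mathcal{O}}/\mathfrak{o}}$ is a reasonable point but is indeed built into the construction, as you note.
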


\begin{proof} As $p \in \mathfrak{m}_\infty$, it follows from Lemma \ref{scholl3.4.12} that the kernel and the cokernel of $b$ are killed by $p$.
\end{proof}

\begin{lem} \label{cd-faltings} The diagram of $\widehat{\mathcal{O}_\infty}$-modules
$$
\begin{tikzcd}
\widehat{\mathcal{O}_\infty}^2 \ar[r, "a"] \ar[d, "c"]& H^1\left(\mathcal{M}_\infty/\mathcal{K}_\infty, \widehat{\mathcal{B}_\infty}(1) \right) \ar[d, "b"]\\
\widehat{\Omega}^1_{\mathcal{O}/\mathfrak{o}} \otimes_{\mathcal{O}} \widehat{\mathcal{O}_\infty} \ar[r, "\delta_{\mathcal{K}_\infty}"] & H^1\left( \mathcal{K}_\infty, T_p\widehat{\Omega}^1_{\overline{\mathcal{O}}/\mathfrak{o}} \right)
\end{tikzcd}
$$
is commutative.
\end{lem}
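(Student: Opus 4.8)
The plan is to prove the commutativity on the nose by writing down explicit continuous $1$-cocycles on $\mathrm{Gal}(\overline{\mathcal{K}}/\mathcal{K}_\infty)$ representing the two composites $b \circ a$ and $\delta_{\mathcal{K}_\infty} \circ c$, and checking that they are literally the same cocycle. All four maps are $\widehat{\mathcal{O}_\infty}$-linear: $c$ and $a$ are linear by construction, while $\delta_{\mathcal{K}_\infty}$ (a connecting homomorphism) and the inflation defining $b$ are module maps over the invariants $\widehat{\mathcal{O}_\infty} = H^0(\mathcal{K}_\infty, \widehat{\overline{\mathcal{O}}})$. Hence it suffices to verify the equality on the standard basis vectors $(1,0)$ and $(0,1)$ of $\widehat{\mathcal{O}_\infty}^2$, and by the symmetry between $T_1$ and $T_2$ only on $(1,0)$.

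First I would treat the left-bottom composite. Here $c(1,0) = d\log T_1$, and to apply the connecting map $\delta_{\mathcal{K}_\infty}$ attached to Proposition \ref{ml} I choose the lift $\tau_1 = (d\log T_1^{p^{-n}})_{n \geq 0} \in T_p\widehat{\Omega}^1_{\overline{\mathcal{O}}/\mathcal{O}}$; this is well defined because $p^n \, d\log T_1^{p^{-n}} = d\log T_1 = 0$ in $\Omega^1_{\overline{\mathcal{O}}/\mathcal{O}}$ and $p \cdot d\log T_1^{p^{-(n+1)}} = d\log T_1^{p^{-n}}$. Unwinding the boundary map underlying the short exact sequence of Proposition \ref{ml} (lift $d\log T_1^{p^{-n}}$ to $\Omega^1_{\overline{\mathcal{O}}/\mathfrak{o}}$ and multiply by $p^n$, recovering $d\log T_1$) shows that $\tau_1$ maps to $1 \otimes d\log T_1$, so it is a genuine lift of $c(1,0)$. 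The class $\delta_{\mathcal{K}_\infty}(d\log T_1)$ is then represented by $g \mapsto g\tau_1 - \tau_1$, and from $g(d\log T_1^{p^{-n}}) = d\log(g(T_1^{p^{-n}})/T_1^{p^{-n}}) + d\log T_1^{p^{-n}}$ I obtain the explicit cocycle $g \mapsto \bigl( d\log(g(T_1^{p^{-n}})/T_1^{p^{-n}}) \bigr)_{n \geq 0} \in T_p\widehat{\Omega}^1_{\overline{\mathcal{O}}/\mathfrak{o}}$.

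For the top-right composite, by Kummer theory and the triviality of the $\mathrm{Gal}(\mathcal{M}_\infty/\mathcal{K}_\infty)$-action on $\widehat{\mathcal{O}_\infty}(1)$, the class $a(1,0) \in H^1(\mathcal{M}_\infty/\mathcal{K}_\infty, \widehat{\mathcal{B}_\infty}(1))$ is represented by the cocycle $\sigma \mapsto (\sigma(T_1^{p^{-n}})/T_1^{p^{-n}})_{n} \in \Z_p(1) \subset \widehat{\mathcal{O}_\infty}(1)$, namely the first projection $\mathrm{Gal}(\mathcal{M}_\infty/\mathcal{K}_\infty) \simeq \Z_p(1)^2 \to \Z_p(1)$. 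Inflating along $\mathrm{Gal}(\overline{\mathcal{K}}/\mathcal{K}_\infty) \twoheadrightarrow \mathrm{Gal}(\mathcal{M}_\infty/\mathcal{K}_\infty)$, which is precisely the first step of $b$, gives $g \mapsto (g(T_1^{p^{-n}})/T_1^{p^{-n}})_n \in \Z_p(1) \subset \widehat{\overline{\mathcal{O}}}(1)$. Applying the isomorphism $\widehat{\overline{\mathcal{O}}}(1) \simeq \widehat{\mathfrak{a}}_{\overline{\mathcal{O}}/\mathfrak{o}}(1) \simeq T_p\widehat{\Omega}^1_{\overline{\mathcal{O}}/\mathfrak{o}}$ of \cite[(3.4.7)]{scholl}, which restricts on $\Z_p(1) = T_p\mu_{p^\infty}$ to $(\zeta_{p^n})_n \mapsto (d\log \zeta_{p^n})_n$, produces the cocycle $g \mapsto \bigl( d\log(g(T_1^{p^{-n}})/T_1^{p^{-n}}) \bigr)_n$. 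This is identical to the cocycle found for $\delta_{\mathcal{K}_\infty} \circ c$, so the square commutes exactly.

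The two places where care is needed, and where I expect the only real work to lie, are the interfaces with \cite{scholl}: first, correctly identifying the surjection in Proposition \ref{ml} so as to confirm that $\tau_1 \mapsto d\log T_1$; and second, confirming that the Fontaine-type isomorphism \cite[(3.4.7)]{scholl} is normalized by $d\log$ on the copy of $\Z_p(1)$, since it is this normalization that fixes the constant and makes the agreement exact rather than merely up to a unit. Once these two normalizations are matched, the two cocycle computations above coincide term by term and the commutativity follows.
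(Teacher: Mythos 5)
Your proposal is correct and follows essentially the same route as the paper: reduce to the basis vectors $(1,0)$ and $(0,1)$ by $\widehat{\mathcal{O}_\infty}$-linearity, compute an explicit cocycle for each composite, and match them via the identity $d\log\bigl(\sigma(T_1^{p^{-n}})/T_1^{p^{-n}}\bigr)=\sigma(d\log T_1^{p^{-n}})-d\log T_1^{p^{-n}}$ together with the $d\log$-normalization of the isomorphism $\widehat{\overline{\mathcal{O}}}(1)\simeq T_p\widehat{\Omega}^1_{\overline{\mathcal{O}}/\mathfrak{o}}$ from \cite[(3.4.7)]{scholl}. Your explicit verification that $(d\log T_1^{p^{-n}})_{n\geq 0}$ is a well-defined lift in $T_p\widehat{\Omega}^1_{\overline{\mathcal{O}}/\mathcal{O}}$ mapping to $1\otimes d\log T_1$ is a detail the paper leaves implicit, but the argument is the same.
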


\begin{proof} Let us prove that $(1,0)$ is sent to the same element of $H^1\left( \mathcal{K}_\infty, T_p\widehat{\Omega}^1_{\overline{\mathcal{O}}/\mathfrak{o}} \right)$ by the two paths of the diagram. The image of $(1,0) \in \widehat{\mathcal{O}_\infty}^2$ by the left hand vertical mrophism is $d\log T_1 \otimes 1$ and $\delta_{\mathcal{K}_\infty}(d\log T_1 \otimes 1)$ is the cohomology class of the continuous cocycle $\mrm{Gal}(\overline{\mathcal{K}_\infty}/\mathcal{K}_\infty) \rightarrow T_p\widehat{\Omega}^1_{\overline{\mathcal{O}}/\mathfrak{o}}$ defined by $\sigma \mapsto \left(\sigma(d\log T_1^{p^{-n}})-d\log T_1^{p^{-n}} \right)_{n \geq 0}$. The image of $(1,0)$ by the upper horizontal morphism is the cohomology class of the unique continuous cocycle $\mrm{Gal}(\mathcal{M}_\infty/\mathcal{K}_\infty) \rightarrow \widehat{\mathcal{B}_\infty}(1)$ defined by $\sigma_1 \mapsto 1 \otimes (\zeta_{p^n})_{n \geq 0} \in \widehat{\mathcal{B}_\infty} \otimes \Z_p(1)$ and $\sigma_2 \mapsto 0$. The image of $1 \otimes (\zeta_{p^n})_{n \geq 0}$ by the morphism $\widehat{\overline{\mathcal{O}}}(1) \rightarrow T_p\widehat{\Omega}^1_{\overline{\mathcal{O}}/\mathfrak{o}}$ defined above is $(d\log \zeta_{p^n})_{n \geq 0}$. Hence the image of this cohomology class by the right hand vertical morphism is the cohomology class of the continuous cocycle $\mrm{Gal}(\overline{\mathcal{K}_\infty}/\mathcal{K}_\infty) \rightarrow T_p\widehat{\Omega}^1_{\overline{\mathcal{O}}/\mathfrak{o}}$ defined by $\sigma \mapsto \left(d\log \zeta_{p^n}^{\chi_1(\sigma)} \right)_{n \geq 0}$ where $\chi_1(\sigma) \in \Z_p$ is defined by $\sigma(T_1^{p^{-n}})/T_1^{p^{-n}}=\zeta_{p^n}^{\chi_1(\sigma)}$ for any $n \geq 0$. In particular $d\zeta_{p^n}^{\chi_1(\sigma)}=\sigma(d\log T_1^{p^{-n}})-d\log T_1^{p^{-n}}$ for any $n \geq 0$. Similarly, one proves that $(0,1)$ is sent to the same element of $H^1\left( \mathcal{K}_\infty, T_p\widehat{\Omega}^1_{\overline{\mathcal{O}}/\mathfrak{o}} \right)$ by the two paths of the diagram. Hence the diagram is commutative as claimed.
\end{proof}

The following statement is the generalization of \cite[Proposition 3.4.12]{scholl} to the big local field $\mathcal{K}$ of dimension $2$.

\begin{pro} \label{hyodo-tate} Let 
\begin{equation} \label{delta}
\delta_{\mathcal{K}_{\infty}/\mathcal{K}}: \widehat{\Omega}^1_{\mathcal{O}/\mathfrak{o}} \otimes_{\mathcal{O}} \widehat{\mathcal{O}_{\infty}} \rightarrow H^1\left( \mathcal{K}_{\infty}, T_p\widehat{\Omega}^1_{\overline{\mathcal{O}}/\mathfrak{o}}\right)
\end{equation}
be the homomorphism \eqref{connecting-big} for $L'=\mathcal{K}_\infty$ and $L=\mathcal{K}$. There exists an integer $e \geq 0$ such that the kernel and the cokernel of $\delta_{\mathcal{K}_{\infty}/\mathcal{K}}$ are killed by $p^e$.
\end{pro}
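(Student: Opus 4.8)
The plan is to obtain the statement by a diagram chase from the commutative square of Lemma \ref{cd-faltings}, exploiting the quantitative control already established on its other three edges. Recall that in that square the top map $a\colon \widehat{\mathcal{O}_\infty}^2 \to H^1(\mathcal{M}_\infty/\mathcal{K}_\infty, \widehat{\mathcal{B}_\infty}(1))$ is, by Lemma \ref{faltings}, injective with cokernel killed by $(\zeta_p-1)^2$; the left map $c\colon \widehat{\mathcal{O}_\infty}^2 \to \widehat{\Omega}^1_{\mathcal{O}/\mathfrak{o}} \otimes_{\mathcal{O}} \widehat{\mathcal{O}_\infty}$ is, by Lemma \ref{killed1}, injective with cokernel killed by some power $p^s$; and the right map $b$ has, by Corollary \ref{b}, kernel and cokernel killed by $p$. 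Since the bottom edge of the square is precisely the map $\delta_{\mathcal{K}_{\infty}/\mathcal{K}}$ of \eqref{delta} (written $\delta_{\mathcal{K}_\infty}$ in Lemma \ref{cd-faltings}), bounding its kernel and cokernel will give the proposition.

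First I would convert the $(\zeta_p-1)^2$-bound on $\coker(a)$ into a $p$-bound. As $p>2$ we have $p-1\geq 2$, and $p=(\zeta_p-1)^{p-1}\cdot u$ for a unit $u\in\mathbb{Z}_p[\zeta_p]$, so $(\zeta_p-1)^2$ divides $p$ in $\widehat{\mathcal{O}_\infty}$. Consequently any module killed by $(\zeta_p-1)^2$ is killed by $p$; in particular $\coker(a)$ is killed by $p$. Thus I may assume that $a$ is injective with cokernel killed by $p$, that $c$ is injective with cokernel killed by $p^s$, and that $b$ has kernel and cokernel killed by $p$.

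For the kernel, take $x$ with $\delta_{\mathcal{K}_{\infty}/\mathcal{K}}(x)=0$. Using $\coker(c)$ killed by $p^s$, write $p^s x=c(y)$; then $b(a(y))=\delta_{\mathcal{K}_{\infty}/\mathcal{K}}(c(y))=p^s\delta_{\mathcal{K}_{\infty}/\mathcal{K}}(x)=0$, so $a(y)\in\ker(b)$ and hence $a(py)=p\,a(y)=0$; injectivity of $a$ gives $py=0$, whence $p^{s+1}x=p\,c(y)=c(py)=0$. So $\ker(\delta_{\mathcal{K}_{\infty}/\mathcal{K}})$ is killed by $p^{s+1}$. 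For the cokernel, take $z$ in the target; since $\coker(b)$ is killed by $p$ write $pz=b(w)$, and since $\coker(a)$ is killed by $p$ write $pw=a(v)$; then $p^2z=b(pw)=b(a(v))=\delta_{\mathcal{K}_{\infty}/\mathcal{K}}(c(v))\in\im(\delta_{\mathcal{K}_{\infty}/\mathcal{K}})$, so $\coker(\delta_{\mathcal{K}_{\infty}/\mathcal{K}})$ is killed by $p^2$. Taking $e=\max(s+1,2)$ then proves the claim.

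The substantive work lies entirely in the preceding lemmas (the cohomology computation of Lemma \ref{colmez}, the Faltings-type comparison of Lemma \ref{faltings}, and the Hodge--Tate control of Corollary \ref{b}), so no genuine obstacle remains at this stage; the only points requiring attention are the careful bookkeeping of the torsion bounds and the passage from a $(\zeta_p-1)$-bound to a $p$-bound, which is where the hypothesis $p>2$ enters. The structural fact that makes the chase succeed is the injectivity of both $a$ and $c$: without it the kernel estimate would break down.
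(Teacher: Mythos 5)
Your proposal is correct and follows essentially the same route as the paper: a diagram chase in the commutative square of Lemma \ref{cd-faltings}, using Lemma \ref{killed1} to lift along $c$ up to a power of $p$, Corollary \ref{b} to control $b$, and Lemma \ref{faltings} to control $a$, with the $(\zeta_p-1)$-bounds converted to $p$-bounds via $\prod_{v\in(\Z/p\Z)^\times}(\zeta_p^v-1)=p$. The only (harmless) differences are that you perform this conversion at the outset rather than at the end and that you write out explicitly the cokernel chase, which the paper leaves as ``proved similarly.''
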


\begin{proof} Let us prove that the kernel of $\delta_{\mathcal{K}_\infty/\mathcal{K}}$ is killed by a power of $p$. Let $x \in \ker \delta_{\mathcal{K}_\infty/\mathcal{K}}$. According to Lemma \ref{cd-faltings}, the diagram
$$
\begin{tikzcd}
\widehat{\mathcal{O}_\infty}^2 \ar[r, "a"] \ar[d, "c"]& H^1\left(\mathcal{M}_\infty/\mathcal{K}_\infty, \widehat{\mathcal{B}_\infty}(1) \right) \ar[d, "b"]\\
\widehat{\Omega}^1_{\mathcal{O}/\mathfrak{o}} \otimes_{\mathcal{O}} \widehat{\mathcal{O}_\infty} \ar[r, "\delta_{\mathcal{K}_\infty/\mathcal{K}}"] & H^1\left( \mathcal{K}_\infty, T_p\widehat{\Omega}^1_{\overline{\mathcal{O}}/\mathfrak{o}} \right)
\end{tikzcd}
$$
is commutative. According to Lemma \ref{killed1}, there exist an integer $u \geq 0$ and an element $y \in \widehat{\mathcal{O}_\infty}^2$ such that $p^ux=c(y)$. Applying $\delta_{\mathcal{K}_\infty/\mathcal{K}}$ we find $(b \circ a)(y)=0$, i.e. $a(y) \in \ker b$. According to Corollary \ref{b} this implies that $a(y)$ is killed by $p$ so that $py \in \ker a$. Hence, according to Lemma \ref{faltings}, we have $(\zeta_p-1)^2 p y=0$. As a consequence $(\zeta_p-1)^2 p^{u+1}x=0$. But we have $\prod_{v \in (\Z/p\Z)^\times} (\zeta_p^v-1)=p$ and so $p^{u+3}x=0$. This proves the claim. The fact that the cokernel of $\delta_{\mathcal{K}_\infty/\mathcal{K}}$ is killed by a power of $p$ is proved similarly using the commutativity of the diagram above and Lemma \ref{faltings}, Lemma \ref{killed1} and Corollary \ref{b}.
\end{proof}

\subsection{Proof of Theorem \ref{cdfinite}} Theorem \ref{cdfinite} is proved by reducing to the setting of the previous section. Namely we will reduce the proof of Theorem \ref{cdfinite} to the one of the commutativity of a diagram involving the big local fields $\mathcal{K}_n$ of dimension $2$. We first need to introduce the maps composing this diagram.

\subsubsection{} Let $s_n: K_2\left( \mathcal{O}_n\right)  \rightarrow H^2(\mathcal{K}_n, \Z/p^n\Z(2))$
be the map defined as the composite of the map $K_2\left( \mathcal{O}_n\right) \rightarrow K_2\left( \mathcal{K}_n\right)$ induced by the inclusion $\mathcal{O}_n \subset \mathcal{K}_n$ and of the Galois symbol $K_2(\mathcal{K}_n) \rightarrow H^2(\mathcal{K}_n, \Z/p^n\Z(2))$ defined for example in \cite[4.2.3]{kato99}. Composing with the twisting
$
H^2(\mathcal{K}_n, \Z/p^n\Z(2)) \otimes \mu_{p^n}^{-1} \rightarrow H^2(\mathcal{K}_n, \Z/p^n\Z(1)),
$
we obtain the map
$K_2\left( \mathcal{O}_n\right) \otimes \mu_{p^n}^{-1} \rightarrow H^2(\mathcal{K}_n, \Z/p^n\Z(1))$
which restricts to a map
$$
Ch_n: \left( K_2\left( \mathcal{O}_n \right) \otimes \mu_{p^n}^{-1} \right)^0 \rightarrow H^2(\mathcal{K}_n, \Z/p^n\Z(1))^0
$$
where
\begin{eqnarray*}
H^2(\mathcal{K}_n, \Z/p^n\Z(1))^0 &=& \ker\left( H^2(\mathcal{K}_n, \Z/p^n\Z(1)) \rightarrow H^2(\mathcal{K}_\infty, \Z/p^n\Z(1))\right),\\
\left( K_2(\mathcal{O}_n) \otimes \mu_{p^n}^{-1} \right)^0 &=& \ker\left( K_2(\mathcal{O}_n) \otimes \mu_{p^n}^{-1}  \rightarrow H^2(\mathcal{K}_n, \Z/p^n\Z(1)) \rightarrow H^2(\mathcal{K}_\infty, \Z/p^n\Z(1))\right).
\end{eqnarray*}

\subsubsection{} We denote by 
$$
d\log:  K_2(\mathcal{O}_n) \otimes \mu_{p^n}^{-1} \rightarrow \Omega^2_{\mathcal{O}_n/\mathfrak{o}} \otimes \mu_{p^n}^{-1}=\Omega^2_{\mathcal{O}_n/\mathfrak{o}}(-1)
$$
the map induced by $d\log_{ \mathcal{O}_n}: K_2\left( \mathcal{O}_n\right) \rightarrow \Omega^2_{\mathcal{O}_n/\Z}$ (see \cite[2.1]{scholl}).

\subsubsection{} 
\begin{lem} \label{dec-df} We have a canonical isomorphism of $\mathcal{O}_{n}$-modules
$$
\widehat{\Omega}^2_{\mathcal{O}_n/\mathfrak{o}} \simeq  \left( \widehat{\Omega}^1_{\mathcal{O}/\mathfrak{o}} \otimes_{\mathfrak{o}} \Omega^1_{\mathfrak{o}_n/\mathfrak{o}}\right) \oplus \left(\widehat{\Omega}^2_{\mathcal{O}/\mathfrak{o}} \otimes_{\mathfrak{o}} \mathfrak{o}_n \right)
$$
where the right hand side is regarded as a $\mathcal{O}_n$-module via the equality $\mathcal{O}_n=\mathcal{O} \otimes_{\mathfrak{o}} \mathfrak{o}_n$.
\end{lem}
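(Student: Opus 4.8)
The plan is to reduce everything to the algebraic identity $\mathcal{O}_n = \mathcal{O} \otimes_{\mathfrak{o}} \mathfrak{o}_n$ recorded in the text, apply the standard product rule for the K\"ahler differentials of a tensor product of two algebras, and then pass to the second exterior power and to $p$-adic completions.

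First I would establish the first-order decomposition. Since $\mathcal{O}_n = \mathcal{O} \otimes_{\mathfrak{o}} \mathfrak{o}_n$ is a tensor product of two $\mathfrak{o}$-algebras, the usual product rule for differentials gives a canonical isomorphism of $\mathcal{O}_n$-modules
$$
\Omega^1_{\mathcal{O}_n/\mathfrak{o}} \cong \big(\mathcal{O}_n \otimes_{\mathcal{O}} \Omega^1_{\mathcal{O}/\mathfrak{o}}\big) \oplus \big(\mathcal{O}_n \otimes_{\mathfrak{o}_n} \Omega^1_{\mathfrak{o}_n/\mathfrak{o}}\big),
$$
induced on the universal derivation by $d(x \otimes y) \mapsto (y\, dx,\, x\, dy)$. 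After $p$-adic completion this reads
$$
\widehat{\Omega}^1_{\mathcal{O}_n/\mathfrak{o}} \cong \big(\mathcal{O}_n \otimes_{\mathcal{O}} \widehat{\Omega}^1_{\mathcal{O}/\mathfrak{o}}\big) \oplus \big(\mathcal{O}_n \otimes_{\mathfrak{o}_n} \Omega^1_{\mathfrak{o}_n/\mathfrak{o}}\big),
$$
and I write $M_1$ and $M_2$ for the two summands on the right.

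Next I would take $\bigwedge^2_{\mathcal{O}_n}$ of this decomposition, using $\bigwedge^2_{\mathcal{O}_n}(M_1 \oplus M_2) \cong \bigwedge^2_{\mathcal{O}_n} M_1 \oplus (M_1 \otimes_{\mathcal{O}_n} M_2) \oplus \bigwedge^2_{\mathcal{O}_n} M_2$ and computing each summand. Because exterior powers commute with base change, $\bigwedge^2_{\mathcal{O}_n} M_1 \cong \mathcal{O}_n \otimes_{\mathcal{O}} \widehat{\Omega}^2_{\mathcal{O}/\mathfrak{o}} \cong \widehat{\Omega}^2_{\mathcal{O}/\mathfrak{o}} \otimes_{\mathfrak{o}} \mathfrak{o}_n$ (using $\mathcal{O}_n = \mathcal{O} \otimes_{\mathfrak{o}} \mathfrak{o}_n$), which is the second summand of the target. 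Similarly $\bigwedge^2_{\mathcal{O}_n} M_2 \cong \mathcal{O}_n \otimes_{\mathfrak{o}_n} \Omega^2_{\mathfrak{o}_n/\mathfrak{o}}$; since $\mathfrak{o}_n = \mathfrak{o}[\zeta_{p^n}]$ is monogenic over $\mathfrak{o}$ (cf. the proof of Lemma \ref{different-generator}), the module $\Omega^1_{\mathfrak{o}_n/\mathfrak{o}}$ is cyclic, generated by $d\zeta_{p^n}$, so $\Omega^2_{\mathfrak{o}_n/\mathfrak{o}} = 0$ and this term drops out. Finally the mixed term collapses, using $\mathcal{O}_n = \mathcal{O} \otimes_{\mathfrak{o}} \mathfrak{o}_n$ to cancel the intermediate tensor factors, to $M_1 \otimes_{\mathcal{O}_n} M_2 \cong \widehat{\Omega}^1_{\mathcal{O}/\mathfrak{o}} \otimes_{\mathfrak{o}} \Omega^1_{\mathfrak{o}_n/\mathfrak{o}}$, the first summand of the target. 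Assembling the three computations yields the asserted isomorphism, which is canonical since every step is induced by the natural maps (the $d\log$'s and their wedges).

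The one point requiring care --- and the likely main obstacle --- is the compatibility of all of this with $p$-adic completion, since $\Omega^1_{\mathcal{O}/\mathfrak{o}}$ itself is far from finitely generated. Here I would invoke Proposition \ref{structure-diff}(i): $\widehat{\Omega}^1_{\mathcal{O}/\mathfrak{o}}$ is finitely generated over $\mathcal{O}$, while $\Omega^1_{\mathfrak{o}_n/\mathfrak{o}}$ is finite over $\mathfrak{o}_n$ because $\mathfrak{o}_n/\mathfrak{o}$ is a finite extension. Over the Noetherian rings in play this finiteness makes $p$-adic completion exact and compatible with finite direct sums, with the base-changes along $\mathcal{O} \to \mathcal{O}_n$ and $\mathfrak{o}_n \to \mathcal{O}_n$, and with the formation of $\bigwedge^2$; in particular $\bigwedge^2_{\mathcal{O}} \widehat{\Omega}^1_{\mathcal{O}/\mathfrak{o}} \cong \widehat{\Omega}^2_{\mathcal{O}/\mathfrak{o}}$, and passing from the uncompleted decomposition to its completed form is legitimate. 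This justifies the manipulations above and completes the argument.
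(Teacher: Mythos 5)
Your proposal is correct and follows essentially the same route as the paper: decompose $\Omega^1_{\mathcal{O}_n/\mathfrak{o}}$ into the $\mathfrak{o}_n/\mathfrak{o}$-part and the $\mathcal{O}_n/\mathfrak{o}_n$-part, take $\bigwedge^2$, kill $\Omega^2_{\mathfrak{o}_n/\mathfrak{o}}$ because $\Omega^1_{\mathfrak{o}_n/\mathfrak{o}}$ is cyclic, and complete $p$-adically. The only (harmless) difference is that you obtain the first-order splitting from the general product rule for differentials of a tensor product, whereas the paper derives it from formal smoothness of $\mathcal{O}_n/\mathfrak{o}_n$ via EGA IV; your extra care about finite generation of $\widehat{\Omega}^1_{\mathcal{O}/\mathfrak{o}}$ when commuting completion with $\bigwedge^2$ is a point the paper leaves implicit.
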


\begin{proof}
As $N$ is prime to $p$, the algebra $\mathcal{O}/\mathfrak{o}$ is formally smooth. As a consequence $\mathcal{O}_n = \mathcal{O} \otimes_{\mathfrak{o}} \mathfrak{o}_n$ and $\mathcal{O}_n/\mathfrak{o}_n$ is formally smooth by \cite[Proposition 19.9.2 (iii)]{ega}. This implies that we have a canonical isomorphism of $\mathcal{O}_n$-modules
\begin{eqnarray*}
\Omega^1_{\mathcal{O}_n/\mathfrak{o}} &\simeq& (\Omega^1_{\mathfrak{o}_n/\mathfrak{o}} \otimes_{\mathfrak{o}_n} \mathcal{O}_n) \oplus \Omega^1_{\mathcal{O}_n/\mathfrak{o}_n}\\
&\simeq& (\Omega^1_{\mathfrak{o}_n/\mathfrak{o}} \otimes_{\mathfrak{o}_n} \mathcal{O}_n) \oplus (\Omega^1_{\mathcal{O}/\mathfrak{o}} \otimes_{\mathfrak{o}} \mathfrak{o}_n).
\end{eqnarray*}
by \cite[Th\'eor\`eme 20.5.7]{ega}. By taking the second exterior power, we have
$$
\Omega^2_{\mathcal{O}_n/\mathfrak{o}} \simeq \Omega^2_{\mathfrak{o}_n/\mathfrak{o}} \otimes_{\mathfrak{o}_n} \mathcal{O}_n \oplus \left( \Omega^1_{\mathcal{O}/\mathfrak{o}} \otimes_{\mathfrak{o}} \Omega^1_{\mathfrak{o}_n/\mathfrak{o}}\right) \oplus \left(\Omega^2_{\mathcal{O}/\mathfrak{o}} \otimes_{\mathfrak{o}} \mathfrak{o}_n \right)
$$
As the $\mathfrak{o}_n$-module $\Omega^1_{\mathfrak{o}_n/\mathfrak{o}} \simeq \mathfrak{o}_n/\mathfrak{d}_n$ is generated by one element, namely $d\log \zeta_{p^n}$, we have $\Omega^2_{\mathfrak{o}_n/\mathfrak{o}}=0$. This implies the statement by taking the $p$-adic completion.
\end{proof}

We denote by
$$
pr: \Omega^2_{\mathcal{O}_n/\mathfrak{o}}(-1) \rightarrow \Omega^1_{\mathfrak{o}_n/\mathfrak{o}}(-1) \otimes \widehat{\Omega}^1_{\mathcal{O}/\mathfrak{o}}
$$
the projection deduced form the statement of Lemma \ref{dec-df}.

\subsubsection{} We have $d\log: \mathcal{O}^\times_\infty/p^n \rightarrow \widehat{\Omega}^1_{\mathcal{O}_\infty/\mathfrak{o}}/p^n$ but $\widehat{\Omega}^1_{\mathcal{O}_\infty/\mathfrak{o}} \simeq \overline{\mathfrak{o}} \otimes_\mathfrak{o} \widehat{\Omega}^1_{\mathcal{O}/\mathfrak{o}} \oplus \Omega^1_{\overline{\mathfrak{o}}/\mathfrak{o}} \otimes_\mathfrak{o} \mathcal{O}$ and the second summand is divisible. In particular, we obtain the map
$$
d\log: \mathcal{O}^\times_\infty/p^n \rightarrow \overline{\mathfrak{o}}/p^n \otimes_\mathfrak{o}  \widehat{\Omega}^1_{\mathcal{O}/\mathfrak{o}}.
$$ 

\begin{thm} \label{main22} There exists an integer $e \geq 0$ such that the diagram 
$$
\begin{tikzcd}
\left( K_2(\mathcal{O}_n) \otimes \mu_{p^n}^{-1} \right)^0 \ar[r, "Ch_n"] \ar[d, "d\log" left]& H^2(\mathcal{K}_n, \Z/p^n\Z(1))^0 \ar[d, "Hochschild-Serre"]\\
\Omega^2_{\mathcal{O}_n/\mathfrak{o}}(-1) \ar[d, "pr" left] & H^1(K_n, H^1(\mathcal{K}_\infty,\Z/p^n\Z(1))) \ar[d, "\sim"]\\
\Omega^1_{\mathfrak{o}_n/\mathfrak{o}}(-1) \otimes_{\mathfrak{o}} \widehat{\Omega}^1_{\mathcal{O}/\mathfrak{o}} \ar[d, "d \log \zeta_{p^n} \otimes \langle \zeta_{p^n} \rangle^{-1} \mapsto 1" left] & H^1(K_n, \mathcal{O}_\infty^\times/p^n) \ar[d, "d\log"]\\
\mathfrak{o}_n/\mathfrak{d}_n \otimes_{\mathfrak{o}} \widehat{\Omega}^1_{\mathcal{O}/\mathfrak{o}} \ar[r, "\cup \frac{1}{p^n} \log \chi_{cyc}"]& H^1(K_n, \overline{\mathfrak{o}}/p^{n-1} \otimes_{\mathfrak{o}} \widehat{\Omega}^1_{\mathcal{O}/\mathfrak{o}}) 
\end{tikzcd}
$$
is commutative up to $p^e$-torsion.
\end{thm}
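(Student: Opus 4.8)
The plan is to reduce Theorem \ref{main22} to an explicit identity between Galois cocycles evaluated on Steinberg symbols, and then to match the two cocycles using the Kähler-differential machinery of the big local field $\mathcal{K}$ assembled in Lemmas \ref{colmez}--\ref{cd-faltings} and Proposition \ref{hyodo-tate}. First I would observe that every vertical arrow in the diagram is a homomorphism and that $d\log\{u,v\}=d\log u\wedge d\log v$; since $K_2(\mathcal{O}_n)$ is generated by Steinberg symbols, it suffices to chase an element $\{u,v\}\otimes\langle\zeta_{p^n}\rangle^{-1}$ with $u,v\in\mathcal{O}_n^\times$ around the diagram, with the homological-triviality condition defining the superscript $0$ imposed on such generators so that the Hochschild--Serre edge map is defined on them.

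For the left-bottom path, using the canonical decomposition of Lemma \ref{dec-df} I would write $d\log u=\alpha_u\,d\log\zeta_{p^n}+\beta_u$ and $d\log v=\alpha_v\,d\log\zeta_{p^n}+\beta_v$, where $\alpha_u,\alpha_v\in\mathcal{O}_n$ are the coefficients along the summand $\Omega^1_{\mathfrak{o}_n/\mathfrak{o}}\otimes_{\mathfrak{o}_n}\mathcal{O}_n=\mathcal{O}_n\,d\log\zeta_{p^n}$ and $\beta_u,\beta_v\in\mathfrak{o}_n\otimes_{\mathfrak{o}}\widehat{\Omega}^1_{\mathcal{O}/\mathfrak{o}}$ are the geometric components. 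The projection $pr$ applied to $d\log u\wedge d\log v$ retains only the mixed term $d\log\zeta_{p^n}\otimes(\alpha_u\beta_v-\alpha_v\beta_u)$, and after applying $d\log\zeta_{p^n}\otimes\langle\zeta_{p^n}\rangle^{-1}\mapsto 1$ and $\cup\,\tfrac{1}{p^n}\log\chi_{cyc}$ (using Lemma \ref{different-generator} to identify $\mathfrak{o}_n/\mathfrak{d}_n$), the left-bottom path sends $\{u,v\}$ to the class of the cocycle $\sigma\mapsto\tfrac{1}{p^n}\log\chi_{cyc}(\sigma)\,(\alpha_u\beta_v-\alpha_v\beta_u)$ in $H^1(K_n,\overline{\mathfrak{o}}/p^{n-1}\otimes_{\mathfrak{o}}\widehat{\Omega}^1_{\mathcal{O}/\mathfrak{o}})$.

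The key step is to identify the right-hand column as the two-dimensional analogue of the Hodge--Tate map denoted $\pi_1$ in \cite{scholl} and to evaluate it on the Galois symbol $(u)\cup(v)$. Here I would first note that the composite of Kummer theory $H^1(\mathcal{K}_\infty,\mathbb{Z}/p^n(1))\simeq\mathcal{O}_\infty^\times/p^n$ with the map $d\log$ is, at the level of coefficients, exactly the Kummer--$d\log$ description encoded by the morphisms $a$ and $b$ of Lemma \ref{cd-faltings}, while the left column realizes the same data through $c$ and $\delta_{\mathcal{K}_\infty}$. Unwinding the Hochschild--Serre edge homomorphism at the cochain level, the homologically trivial class $(u)\cup(v)$ maps to the antisymmetrized cross term in which one factor contributes its restriction to $\mathcal{K}_\infty$ (an element of $\mathcal{O}_\infty^\times/p^n$, hence a $d\log$) and the other contributes its arithmetic Kummer cocycle valued in $\mu_{p^n}$, whose image under $d\log$ is governed by $\log\chi_{cyc}$ through the relation $\sigma(d\log\zeta_{p^n})-d\log\zeta_{p^n}=d\log\zeta_{p^n}^{\,\chi_{cyc}(\sigma)-1}$. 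Matching this with the cocycle of the previous paragraph is then precisely the commutativity of the square of Lemma \ref{cd-faltings}, with $a$ recording the two Kummer directions $d\log T_1,d\log T_2$ and the antisymmetrization $\alpha_u\beta_v-\alpha_v\beta_u$ reflecting the skew-symmetry of the cup product.

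Finally, the statement is only claimed up to $p^e$-torsion because the comparison maps are not isomorphisms: Lemma \ref{faltings}, Lemma \ref{killed1} and Corollary \ref{b} bound the relevant kernels and cokernels by fixed powers of $p$, and Proposition \ref{hyodo-tate} packages these into a uniform exponent controlling the discrepancy between $\delta_{\mathcal{K}_\infty/\mathcal{K}}$ and the Kummer--$d\log$ description, which I would feed back into the symbol computation. I expect the main obstacle to be the cochain-level analysis of the Hochschild--Serre edge homomorphism on a cup product of two Kummer classes over the two-dimensional big local field $\mathcal{K}_n$: in Scholl's one-dimensional setting $\widehat{\Omega}^1_{\mathcal{O}/\mathfrak{o}}$ has rank one and there is a single direction $d\log T$, whereas here one must track both generators $T_1,T_2$ and verify that exactly the mixed $(T_1,T_2)$-component survives, with the correct normalization by the relative different $\mathfrak{d}_n$ and the factor $1/p^n$. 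Once this explicit cocycle identity is established, combining it with the torsion bounds above yields the commutativity up to $p^e$-torsion.
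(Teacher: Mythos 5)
Your proposal does not follow the paper's route, and as written it has two genuine gaps. The paper never chases Steinberg symbols: it interpolates the two columns of the diagram by a third one with coefficients in $\left(\,_{p^n}\widehat{\Omega}^1_{\overline{\mathcal{O}}/\mathfrak{o}}\right)^{\otimes 2}(-1)$, obtained from the maps $\bigwedge^2\delta_{\mathcal{K}_n,n}$ and $(d\log_n)^{\otimes 2}$, and decomposes the big square into five sub-diagrams, each of which commutes by one of Scholl's Lemmas 3.5.2, 3.5.3(ii), 3.3.8 and 3.5.4 (all valid for big local fields of arbitrary dimension, so no new cocycle computation is needed there); the only genuinely new input is Proposition \ref{hyodo-tate}, which makes the single map that is not an isomorphism into an almost-isomorphism and produces the exponent $e$. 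Your plan to ``unwind the Hochschild--Serre edge homomorphism at the cochain level'' on a symbol $\{u,v\}$ is exactly the content that these lemmas replace, and you acknowledge it as ``the main obstacle'' without carrying it out --- but that step \emph{is} the proof.

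Concretely, two things break. First, the reduction to symbols is incompatible with the superscript $0$: the edge homomorphism is only defined on the kernel of restriction to $\mathcal{K}_\infty$, a homologically trivial class is a sum of symbols that individually need not be homologically trivial, and your ``cross-term'' description of the edge map of $(u)\cup(v)$ (one factor restricted to $\mathcal{O}_\infty^\times/p^n$, the other contributing its Kummer cocycle) presupposes that one of the two Kummer classes dies on restriction to $\mathcal{K}_\infty$, which is false for general $u,v\in\mathcal{O}_n^\times$. Second, you attribute the matching of the two cocycles to Lemma \ref{cd-faltings}, but that lemma only compares $\delta_{\mathcal{K}_\infty}$ with the Kummer-theoretic map $a$ on the rank-two lattice spanned by $d\log T_1,d\log T_2$; it is an ingredient in the proof of Proposition \ref{hyodo-tate}, not a computation of the edge map on cup products. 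What is actually needed in its place is the compatibility of the Galois symbol with $d\log$ through $\bigwedge^2\delta_{\mathcal{K}_n,n}$ (Lemma \ref{cdul}) together with the middle-rectangle identity of Lemma \ref{middle-rectangle} and Scholl's Lemma 3.5.4; none of these appears in your argument. Your computation of the left-bottom path via Lemma \ref{dec-df}, and your identification of Proposition \ref{hyodo-tate} (fed by Lemmas \ref{faltings}, \ref{killed1} and Corollary \ref{b}) as the source of the $p^e$-torsion, are both correct, but the central comparison is missing.
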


To prove Theorem \ref{main22} we will need several intermediary Lemmas.

\subsubsection{} We define the homomorphism 
\begin{equation*} \label{delta-gkn-n'}
\delta_{\mathcal{K}_n, n}: \widehat{\Omega}^1_{{\mathcal{O}}_{n}/\mathfrak{o}} \rightarrow H^1\left( \mathcal{K}_n, {}_{p^{n}}\widehat{\Omega}^1_{{\overline{\mathcal{O}}}/\mathfrak{o}} \right)
\end{equation*}
to be the composite of $\delta_{\mathcal{K}_n}:  \widehat{\Omega}^1_{{\mathcal{O}}_{n}/\mathfrak{o}} \rightarrow H^1\left( \mathcal{K}_n, T_p\left(\widehat{\Omega}^1_{{\overline{\mathcal{O}}}/\mathfrak{o}}\right) \right)$ which is defined at the end of section \ref{prelimkahler} and of the homomorphism induced by the projection $T_p\left(\widehat{\Omega}^1_{{\overline{\mathcal{O}}}/\mathfrak{o}}\right) \rightarrow {}_{p^{n}}\widehat{\Omega}^1_{{\overline{\mathcal{O}}}/\mathfrak{o}}$. Let $\left({\,\!}_{p^{n}}\widehat{\Omega}_{{\overline{\mathcal{O}}}/\mathfrak{o}}^{1} \right)^{\otimes 2}$ denote $\left( {}_{p^{n}}\widehat{\Omega}_{{\overline{\mathcal{O}}}/\mathfrak{o}}^{1} \right) \otimes_{{\overline{\mathcal{O}}}/p^{n}}\left(  {}_{p^{n}} \widehat{\Omega}_{{\overline{\mathcal{O}}}/\mathfrak{o}}^{1} \right)$. Let us also denote by 
\begin{equation} \label{wedge-delta}
\bigwedge^2 \delta_{\mathcal{K}_n,n}: \widehat{\Omega}^2_{\mathcal{O}_{n}/\mathfrak{o}} \rightarrow H^2\left(\mathcal{K}_n, \left({\,\!}_{p^{n}}\widehat{\Omega}_{{\overline{\mathcal{O}}}/\mathfrak{o}}^{1} \right)^{\otimes 2}\right)
\end{equation}
the homomorphism defined by $\left(\bigwedge^2 \delta_{\mathcal{K}_n,n} \right)(x \wedge y)=\delta_{\mathcal{K}_n,n}(x) \cup \delta_{\mathcal{K}_n,n}(y)$.\\

For any integer $n \geq 0$ we have the homomorphism $d\log_n: \Z/p^n\Z(1) \rightarrow {}_{p^n}\widehat{\Omega}_{{\overline{\mathcal{O}}}/\mathfrak{o}}^{1}$ and the homomorphism $(d\log_n)^{\otimes 2}:  \Z/p^n\Z(2) \rightarrow \left(\,\!_{p^n}\widehat{\Omega}_{{\overline{\mathcal{O}}}/\mathfrak{o}}^{1}\right)^{\otimes 2}$. 

\begin{lem} \label{cdul}
The diagram
$$
\begin{tikzcd}
\left( K_2(\mathcal{O}_n) \otimes \mu_{p^n}^{-1} \right)^0 \ar[r, "Ch_n"] \ar[d, "d\log" left] & H^2(\mathcal{K}_n, \Z/p^n\Z(1))^0 \ar[d, "d\log_n"]\\
\widehat{\Omega}^2_{\mathcal{O}_n/\mathfrak{o}}(-1) \ar[r, "\bigwedge^2 \delta_{\mathcal{K}_n,n}"] & H^2\left(\mathcal{K}_n, \left(\,_{p^n} \widehat{\Omega}^1_{\overline{\mathcal{O}}/\mathfrak{o}}\right)^{\otimes 2}(-1)\right)
\end{tikzcd}
$$
is commutative for every integer $n \geq 0$.
\end{lem}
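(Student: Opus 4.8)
The plan is to reduce the commutativity of the diagram, which is a statement about $K_2$, to a single compatibility in degree one between the Kummer map and the connecting homomorphism $\delta_{\mathcal{K}_n}$, and then to verify that compatibility by an explicit cocycle computation. Both composites in the diagram are $\Z$-linear, and since the left vertical map is the restriction of $d\log$ and the other three maps are defined on the whole source, it suffices to prove commutativity on all of $K_2(\mathcal{O}_n) \otimes \mu_{p^n}^{-1}$. After fixing a generator of the rank-one module $\mu_{p^n}$, one reduces to comparing the two composites on $K_2(\mathcal{O}_n)$; and as $\mathcal{O}_n$ is a discrete valuation ring with infinite residue field, $K_2(\mathcal{O}_n)$ is generated by Steinberg symbols $\{u,v\}$ with $u,v \in \mathcal{O}_n^\times$, so it is enough to treat a symbol. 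On such a symbol the horizontal maps are explicit: $Ch_n(\{u,v\}\otimes\zeta^{-1}) = (\delta(u)\cup\delta(v))\otimes\zeta^{-1}$, where $\delta$ denotes the Kummer map and the Galois symbol is multiplicative by its very definition, while $d\log(\{u,v\}\otimes\zeta^{-1}) = (d\log u \wedge d\log v)\otimes\zeta^{-1}$ by the defining property of $d\log$ recalled above.

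Next I would peel off the cup and wedge structures. Since $\bigwedge^2 \delta_{\mathcal{K}_n,n}$ is defined by $\bigl(\bigwedge^2 \delta_{\mathcal{K}_n,n}\bigr)(x\wedge y) = \delta_{\mathcal{K}_n,n}(x)\cup\delta_{\mathcal{K}_n,n}(y)$, the lower path sends $\{u,v\}\otimes\zeta^{-1}$ to $\bigl(\delta_{\mathcal{K}_n,n}(d\log u)\cup\delta_{\mathcal{K}_n,n}(d\log v)\bigr)\otimes\zeta^{-1}$. On the other side the right vertical map is the coefficient map induced by $(d\log_n)^{\otimes 2}$ (twisted by $\mu_{p^n}^{-1}$), and the naturality of the cup product with respect to coefficient homomorphisms gives $(d\log_n)^{\otimes 2}_{*}(\delta(u)\cup\delta(v)) = (d\log_n)_{*}\delta(u)\cup (d\log_n)_{*}\delta(v)$. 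Thus the lemma reduces to the degree-one identity
$$
(d\log_n)_{*}\,\delta(u) = \delta_{\mathcal{K}_n,n}(d\log u) \in H^1\!\left(\mathcal{K}_n, {}_{p^n}\widehat{\Omega}^1_{\overline{\mathcal{O}}/\mathfrak{o}}\right)
$$
for every $u \in \mathcal{O}_n^\times$, the matching graded-commutativity of $\cup$ in degree one and antisymmetry of $\wedge$ making this reduction consistent.

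Finally I would prove this identity by computing both sides as explicit cocycles, fixing a compatible system $(u^{1/p^m})_m$ of $p$-power roots of $u$ in $\overline{\mathcal{O}}$. The left-hand side is the image under $d\log_n$ of the Kummer cocycle $\sigma\mapsto \sigma(u^{1/p^n})/u^{1/p^n}$; using that $d\log$ is Galois-equivariant, this is the class of $\sigma\mapsto (\sigma-1)\,d\log u^{1/p^n}$, which lands in ${}_{p^n}\widehat{\Omega}^1_{\overline{\mathcal{O}}/\mathfrak{o}}$ because $p^n(\sigma-1)\,d\log u^{1/p^n} = (\sigma-1)\,d\log u = 0$ as $u\in\mathcal{O}_n$ is Galois-fixed. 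For the right-hand side I would unwind the connecting map of the short exact sequence of Proposition \ref{ml} (applied with $A=\mathcal{O}_n$): the element $\beta = (d\log u^{1/p^m})_m$ of $T_p\widehat{\Omega}^1_{\overline{\mathcal{O}}/\mathcal{O}_n}$ is a lift of $1\otimes d\log u$, since each $d\log u^{1/p^m}$ is killed by $p^m$ in $\widehat{\Omega}^1_{\overline{\mathcal{O}}/\mathcal{O}_n}$ (as $u\in\mathcal{O}_n$) while $p^m\,d\log u^{1/p^m} = d\log u$ maps to $1\otimes d\log u$. Hence $\delta_{\mathcal{K}_n}(d\log u)$ is the class of $\sigma\mapsto (\sigma-1)\beta$, whose projection to ${}_{p^n}\widehat{\Omega}^1_{\overline{\mathcal{O}}/\mathfrak{o}}$ is exactly $\sigma\mapsto (\sigma-1)\,d\log u^{1/p^n}$. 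The two cocycles coincide, which proves the identity.

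The main obstacle is this degree-one identity, and within it the correct identification of $\beta = (d\log u^{1/p^m})_m$ as a lift of $1\otimes d\log u$ through the connecting homomorphism of Proposition \ref{ml}, together with the careful bookkeeping of the $\mu_{p^n}^{-1}$-twist and of the projection $T_p\widehat{\Omega}^1_{\overline{\mathcal{O}}/\mathfrak{o}} \to {}_{p^n}\widehat{\Omega}^1_{\overline{\mathcal{O}}/\mathfrak{o}}$; by contrast, the two reductions (to Steinberg symbols and, via naturality of the cup product, to degree one) are formal.
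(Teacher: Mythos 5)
Your argument is correct, but it is doing something different from what the paper does: the paper's entire proof is to quote \cite[Lemma 3.5.2]{scholl} for the untwisted square (with $s_n$ and $\Z/p^n\Z(2)$ coefficients) and then tensor the whole diagram with $\mu_{p^n}^{-1}$, whereas you have in effect re-proved that cited lemma from scratch in the two-dimensional setting. Your route --- reduction to Steinberg symbols, multiplicativity of the Galois symbol and of $d\log$, naturality of the cup product under the coefficient map $(d\log_n)^{\otimes 2}$, and finally the degree-one identity $(d\log_n)_{*}\delta(u)=\delta_{\mathcal{K}_n,n}(d\log u)$ verified by exhibiting $\beta=(d\log u^{1/p^m})_m$ as a lift of $1\otimes d\log u$ through the sequence of Proposition \ref{ml} with $A=\mathcal{O}_n$ --- is essentially the standard proof of Scholl's lemma, and all the individual steps check out: $\beta$ does lie in $T_p\widehat{\Omega}^1_{\overline{\mathcal{O}}/\mathcal{O}_n}$ since $p^m\,d\log u^{1/p^m}=d\log u$ vanishes there, it does map to $1\otimes d\log u$, and the two cocycles $\sigma\mapsto(\sigma-1)\,d\log u^{1/p^n}$ agree on the nose. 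What your version buys is self-containedness and an explicit check that nothing in Scholl's argument uses that the big local field has dimension one (which is the point the paper is implicitly relying on); what it costs is length, plus two small debts you should still pay: the generation of $K_2(\mathcal{O}_n)$ by Steinberg symbols for a local ring with infinite residue field is a theorem (Dennis--Stein, van der Kallen) and needs a citation, and you should note explicitly that it suffices to verify commutativity on the full modules because the superscript-$0$ subobjects are carried into each other by definition, so the restricted diagram commutes a fortiori.
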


\begin{proof} It follows from \cite[Lemma 3.5.2]{scholl} that the diagram
$$
\begin{tikzcd}
K_2\left( \mathcal{O}_n\right) \ar[r, "s_n"] \ar[d, "d\log" left] & H^2\left(\mathcal{K}_n, \Z/p^n\Z(2) \right) \ar[d, "d\log_n"]\\
\widehat{\Omega}^2_{\mathcal{O}_n/\mathfrak{o}} \ar[r, "\bigwedge^2 \delta_{\mathcal{K}_n,n}"] & H^2\left(\mathcal{K}_n, \left(\,\!_{p^n}\widehat{\Omega}_{{\overline{\mathcal{O}}}/\mathfrak{o}}^{1}\right)^{\otimes 2} \right)
\end{tikzcd}
$$
commutes for every integer $n \geq 0$. The statement follows by tensoring this diagram by $\mu_{p^n}^{-1}$.
\end{proof}
We define
$$
\left( \widehat{\Omega}^2_{\mathcal{O}_n/\mathfrak{o}}(-1) \right)^0=\ker \left(  \widehat{\Omega}^2_{\mathcal{O}_n/\mathfrak{o}}(-1) \rightarrow H^2\left(\mathcal{K}_n, \left(\,_{p^n}\widehat{\Omega}^1_{\overline{\mathcal{O}}/\mathfrak{o}}\right)^{\otimes 2}(-1)\right) \rightarrow H^2\left(\mathcal{K}_\infty, \left(\,_{p^n}\widehat{\Omega}^1_{\overline{\mathcal{O}}/\mathfrak{o}}\right)^{\otimes 2}(-1)\right)\right)
$$
where the first map is $\bigwedge^2 \delta_{\mathcal{K}_n,n}$ and the second map is the restriction. By Lemma \ref{cdul}, we have the commutative diagram
$$
\begin{tikzcd}
\left(  K_2(\mathcal{O}_n) \otimes \mu_{p^n}^{-1} \right)^0 \ar[r] \ar[d] & H^2(\mathcal{K}_n, \Z/p^n\Z(1))^0  \ar[d] \\
\left( \Omega^2_{\mathcal{O}_n/\mathfrak{o}}(-1) \right)^0 \ar[r]  & H^2\left(\mathcal{K}_n, \left(\,_{p^n}\widehat{\Omega}^1_{\overline{\mathcal{O}}/\mathfrak{o}}\right)^{\otimes 2}(-1)\right)^0.
\end{tikzcd}
$$

\subsubsection{} For any integer $n \geq 0$ we have the Hochschild-Serre spectral sequence
$$
E_2^{p,q}=H^p\left(K_n, H^q\left( \mathcal{K}_{\infty}, \left( \,\!_{p^{n}}\widehat{\Omega}_{\overline{\mathcal{O}}/\mathfrak{o}}^{1}\right)^{\otimes 2} \right) \right) \implies H^{p+q} \left( \mathcal{K}_n, \left( \,\!_{p^{n}}\widehat{\Omega}_{\overline{\mathcal{O}}/\mathfrak{o}}^{1} \right)^{\otimes 2} \right).
$$
As the cohomological dimension of the absolute Galois group of a finite extension of $\Q_p$ is $2$, we have $E_2^{3,0}=0$. Hence there is a natural projection
$$
HS_{n}: H^2\left( \mathcal{K}_n, \left( \,\!_{p^{n}}\widehat{\Omega}_{\overline{\mathcal{O}}/\mathfrak{o}}^{1} \right)^{\otimes 2} \right)^0 \rightarrow H^1\left(K_n, H^1\left( \mathcal{K}_{\infty}, \left( \,\!_{p^{n}}\widehat{\Omega}_{\overline{\mathcal{O}}/\mathfrak{o}}^{1} \right)^{\otimes 2}  \right) \right).
$$
Moreover, according to \cite[(3.3.6)]{scholl}, for any integer $n \geq 0$, we have a short exact sequence of $\mrm{Gal}(\overline{K}/K)$-modules
\begin{equation} \label{short-exact-diff1}
0 \rightarrow T_p\Omega^1_{\overline{\mathfrak{o}}/\mathfrak{o}} \rightarrow T_p\Omega^1_{\overline{\mathfrak{o}}/\mathfrak{o}_n} \rightarrow \overline{\mathfrak{o}} \otimes_{\mathfrak{o}_n} \Omega^1_{\mathfrak{o}_n/\mathfrak{o}} \rightarrow 0.
\end{equation}
Let
\begin{equation} \label{delta-pkn}
\delta_{K_n}: \Omega^1_{\mathfrak{o}_n/\mathfrak{o}} \rightarrow H^0 \left(K_n, \overline{\mathfrak{o}} \otimes_{\mathfrak{o}_n} \Omega^1_{\mathfrak{o}_n/\mathfrak{o}}\right) \rightarrow H^1\left( K_n, T_p\Omega^1_{\overline{\mathfrak{o}}/\mathfrak{o}} \right)
\end{equation}
be defined as the composite of the natural inclusion and of the connecting homomorphism for the long exact sequence in Galois cohomology corresponding to the short exact sequence \ref{delta-pkn}. We let
\begin{equation} \label{delta-pkn-n'}
\delta_{K_n, n}: \widehat{\Omega}^1_{\mathfrak{o}_n/\mathfrak{o}} \rightarrow H^1\left( K_n, \!\,_{p^{n}}\widehat{\Omega}^1_{\overline{\mathfrak{o}}/\mathfrak{o}} \right)
\end{equation}
denote the composite of $\delta_{K_n}$ and of the homomorphism induced by the projection $T_p\widehat{\Omega}^1_{\overline{\mathfrak{o}}/\mathfrak{o}} \rightarrow \,\!_{p^{n}} \widehat{\Omega}^1_{\overline{\mathfrak{o}}/\mathfrak{o}}$. 

\begin{lem} \label{middle-rectangle}The diagram
$$
\begin{tikzcd}
\left( \Omega^2_{\mathcal{O}_n/\mathfrak{o}} \right)^0 \ar[r, "\bigwedge^2 \delta_{\mathcal{K}_n, n}"] \ar[d, "pr" left]& H^2\left(\mathcal{K}_n, \left(\,_{p^n}\widehat{\Omega}^1_{\overline{\mathcal{O}}/\mathfrak{o}}\right)^{\otimes 2}\right)^0 \ar[d, "HS_n"]\\
\Omega^1_{\mathfrak{o}_n/\mathfrak{o}} \otimes_{\mathfrak{o}} \widehat{\Omega}^1_{\mathcal{O}/\mathfrak{o}}  \arrow[d, "\delta_{K_n,n} \otimes \id" left]
& H^1\left(K_n, H^1\left( \mathcal{K}_{\infty}, \left( \,\!_{p^{n}}\widehat{\Omega}_{\overline{\mathcal{O}}/\mathfrak{o}}^{1} \right)^{\otimes 2}  \right) \right) \arrow[d, "\sim"] \\
H^1 \left( K_n, \,\!_{p^{n}}\widehat{\Omega}^1_{\overline{\mathfrak{o}}/\mathfrak{o}} \otimes_{\overline{\mathfrak{o}}} \widehat{\Omega}^1_{\mathcal{O}/\mathfrak{o}}\right) \arrow[d, "\id \otimes \delta_{\mathcal{K}_\infty,n}"  left] 
& H^1\left(K_n, H^1\left( \mathcal{K}_{\infty}, \,\!_{p^{n}} \widehat{\Omega}^1_{\overline{\mathfrak{o}}/\mathfrak{o}} \otimes_{\overline{\mathfrak{o}}}  \,\!_{p^{n}}\widehat{\Omega}_{\overline{\mathcal{O}}/\mathfrak{o}}^{1} \right)\right) \arrow[d, "\sim"]\\
H^1 \left( K_n, \,\!_{p^{n}}\widehat{\Omega}^1_{\overline{\mathfrak{o}}/\mathfrak{o}} \otimes_{\overline{\mathfrak{o}}} H^1\left( \mathcal{K}_\infty,\,\!_{p^{n}} \widehat{\Omega}^1_{\overline{\mathcal{O}}/\mathfrak{o}}\right)\right)\arrow[r, equal] & H^1 \left( K_n, \,\!_{p^{n}}\widehat{\Omega}^1_{\overline{\mathfrak{o}}/\mathfrak{o}} \otimes_{\overline{\mathfrak{o}}} H^1\left( \mathcal{K}_\infty,\,\!_{p^{n}} \widehat{\Omega}^1_{\overline{\mathcal{O}}/\mathfrak{o}}\right)\right),
\end{tikzcd}
$$
where the second right hand vertical arrow is induced by the isomorphism $
_{p^{n}} \widehat{\Omega}^1_{\overline{\mathcal{O}}/\mathfrak{o}} \simeq \,\!_{p^{n}} \widehat{\Omega}^1_{\overline{\mathfrak{o}}/\mathfrak{o}} \otimes_{\overline{\mathfrak{o}}} \overline{\mathcal{O}}
$ given by \cite[(3.4.8)]{scholl},
is commutative.
\end{lem}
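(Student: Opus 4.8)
The plan is to reduce to an explicit computation on a basis, using the decomposition of Lemma~\ref{dec-df}. Fix units $T_1,T_2\in\mathcal{O}^\times$ whose images form a $p$-basis of the residue field, so that $d\log T_1,d\log T_2$ form a basis of $\widehat{\Omega}^1_{\mathcal{O}/\mathfrak{o}}\otimes_{\mathcal{O}}\mathcal{K}$ (Proposition~\ref{structure-diff}) and $d\log\zeta_{p^n}$ generates $\Omega^1_{\mathfrak{o}_n/\mathfrak{o}}$. By Lemma~\ref{dec-df} the module $\widehat{\Omega}^2_{\mathcal{O}_n/\mathfrak{o}}$ is generated, over the relevant base, by the mixed wedges $d\log T_i\wedge d\log\zeta_{p^n}$ (the summand $\widehat{\Omega}^1_{\mathcal{O}/\mathfrak{o}}\otimes_{\mathfrak{o}}\Omega^1_{\mathfrak{o}_n/\mathfrak{o}}$) and by the purely geometric wedge $d\log T_1\wedge d\log T_2$ (the summand $\widehat{\Omega}^2_{\mathcal{O}/\mathfrak{o}}\otimes_{\mathfrak{o}}\mathfrak{o}_n$). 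Since $pr$ annihilates the second summand, all maps in sight are additive, and $\bigwedge^2\delta_{\mathcal{K}_n,n}(x\wedge y)=\delta_{\mathcal{K}_n,n}(x)\cup\delta_{\mathcal{K}_n,n}(y)$, it suffices to verify that the two paths agree on the mixed wedges and that the right-hand path annihilates $d\log T_1\wedge d\log T_2$.

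The key input is an explicit cocycle representative for $\delta_{\mathcal{K}_n,n}$, obtained exactly as in the proof of Lemma~\ref{cd-faltings}. For $u\in\mathcal{O}_n^\times$ write $\sigma(u^{p^{-m}})/u^{p^{-m}}=\zeta_{p^m}^{\kappa_u(\sigma)}$; then $\delta_{\mathcal{K}_n,n}(d\log u)$ is the class of the continuous cocycle $\sigma\mapsto\kappa_u(\sigma)\cdot(d\log\zeta_{p^m})_m$, whose coefficient $(d\log\zeta_{p^m})_m$ is the image of $1\otimes(\zeta_{p^m})_m$ and hence lies in the arithmetic submodule ${}_{p^n}\widehat{\Omega}^1_{\overline{\mathfrak{o}}/\mathfrak{o}}$ under the splitting \cite[(3.4.8)]{scholl}. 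For $u=\zeta_{p^n}$ one has $\kappa_u=\chi_{cyc}-1$, which vanishes on $\mathrm{Gal}(\overline{\mathcal{K}_\infty}/\mathcal{K}_\infty)$ since $\mathcal{K}_\infty\supseteq\overline{K}$; thus $\delta_{\mathcal{K}_n,n}(d\log\zeta_{p^n})$ is inflated from $\mathrm{Gal}(\mathcal{K}_\infty/\mathcal{K}_n)$, lies in the first step $F^1H^1$ of the Hochschild--Serre filtration for $\mathcal{K}_\infty/\mathcal{K}_n$, and is identified there with $\delta_{K_n,n}(d\log\zeta_{p^n})$ coming from \eqref{short-exact-diff1}. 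For $u=T_i$ the character $\kappa_{T_i}$ is the geometric Kummer cocycle, whose restriction to $\mathcal{K}_\infty$ is nonzero and represents $\delta_{\mathcal{K}_\infty,n}(d\log T_i)$ in $E_2^{0,1}=H^0\!\left(K_n,H^1(\mathcal{K}_\infty,{}_{p^n}\widehat{\Omega}^1_{\overline{\mathcal{O}}/\mathfrak{o}})\right)$.

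For the mixed wedges I would then apply the multiplicativity of the cup product with respect to the Hochschild--Serre filtration: the product of the filtration-one class $\delta_{\mathcal{K}_n,n}(d\log\zeta_{p^n})$ and the filtration-zero class $\delta_{\mathcal{K}_n,n}(d\log T_i)$ lands in $F^1H^2$, with associated graded the external product of their leading terms, namely $\delta_{K_n,n}(d\log\zeta_{p^n})\otimes\delta_{\mathcal{K}_\infty,n}(d\log T_i)$ in $E_\infty^{1,1}=H^1\!\left(K_n,H^0\otimes H^1\right)$. Transporting this through the splitting ${}_{p^n}\widehat{\Omega}^1_{\overline{\mathcal{O}}/\mathfrak{o}}\simeq{}_{p^n}\widehat{\Omega}^1_{\overline{\mathfrak{o}}/\mathfrak{o}}\otimes_{\overline{\mathfrak{o}}}\overline{\mathcal{O}}$ (the arithmetic coefficient of the cyclotomic class being absorbed into ${}_{p^n}\widehat{\Omega}^1_{\overline{\mathfrak{o}}/\mathfrak{o}}$ and the geometric one into $H^1(\mathcal{K}_\infty,-)$), one recovers precisely $HS_n(\bigwedge^2\delta_{\mathcal{K}_n,n}(d\log T_i\wedge d\log\zeta_{p^n}))$ as the image of $d\log T_i\wedge d\log\zeta_{p^n}$ along the left-hand composite $(\id\otimes\delta_{\mathcal{K}_\infty,n})\circ(\delta_{K_n,n}\otimes\id)\circ pr$.

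The main obstacle is the purely geometric wedge. Here both factors $\delta_{\mathcal{K}_n,n}(d\log T_1)$ and $\delta_{\mathcal{K}_n,n}(d\log T_2)$ lie in filtration zero with leading terms the geometric Kummer classes, so their product lies in $F^0H^2$ with $E_\infty^{0,2}$-component the cup $\delta_{\mathcal{K}_\infty,n}(d\log T_1)\cup\delta_{\mathcal{K}_\infty,n}(d\log T_2)$; the homological-triviality condition defining $(\widehat{\Omega}^2_{\mathcal{O}_n/\mathfrak{o}})^0$ forces this component to vanish, so the product already lies in $F^1H^2$ and $HS_n$ extracts its $\mathrm{gr}^1$-part. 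I expect the delicate point to be showing that this residual $\mathrm{gr}^1$-part is zero, since it is a filtration-jumping (secondary) class rather than a leading term. The strategy is a direct cocycle computation with the representatives $\kappa_{T_1},\kappa_{T_2}$ above: both characters are valued in the geometric directions $\sigma_{(1,0)},\sigma_{(0,1)}$ and share the common arithmetic coefficient $(d\log\zeta_{p^m})_m$, so over $\mathcal{M}_\infty/\mathcal{K}_\infty$ their cup is governed by the explicit description of $H^1(\mathcal{M}_\infty/\mathcal{K}_\infty,\widehat{\mathcal{O}_\infty}T_1^{a_1}T_2^{a_2})$ in Lemma~\ref{colmez}, from which the antisymmetry of $\{T_1,T_2\}$ yields the vanishing of the mixed $(1,1)$-bidegree component. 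Combining the two cases with additivity then gives the commutativity on all of $(\widehat{\Omega}^2_{\mathcal{O}_n/\mathfrak{o}})^0$.
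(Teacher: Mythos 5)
The paper's own proof of this lemma is a single citation: it is asserted to follow from \cite[Lemma 3.5.3 (ii)]{scholl}, whose proof is claimed to carry over verbatim to big local fields of dimension $2$. Your attempt to reconstruct a self-contained argument is therefore a genuinely different route, and the part of it treating the mixed wedges $d\log T_i\wedge d\log\zeta_{p^n}$ — explicit Kummer cocycles for $\delta_{\mathcal{K}_n,n}$, the observation that $\delta_{\mathcal{K}_n,n}(d\log\zeta_{p^n})$ restricts to zero over $\mathcal{K}_\infty$ and hence lies in $F^1$, and the multiplicativity of the cup product with respect to the Hochschild--Serre filtration — is sound in outline and is essentially what Scholl's proof does in the one-dimensional case.

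There are, however, two genuine gaps. First, the reduction to the elements $d\log T_i\wedge d\log\zeta_{p^n}$ and $d\log T_1\wedge d\log T_2$ is only a reduction up to bounded $p$-power torsion: by Proposition \ref{structure-diff} (ii) these form a basis only after inverting $p$ (the module $\widehat{\Omega}^1_{\mathcal{O}/\mathfrak{o}}$ has torsion summands, cf.\ the proof of Lemma \ref{killed1}), so agreement of two $\mathcal{O}_n$-linear composites on them gives commutativity only up to $p^e$-torsion, weaker than the exact commutativity the lemma asserts. Moreover the decomposition of Lemma \ref{dec-df} does not respect the superscript $0$: a homologically trivial $2$-form need not have homologically trivial components, and $d\log T_1\wedge d\log T_2$ is in general \emph{not} in $\left(\Omega^2_{\mathcal{O}_n/\mathfrak{o}}\right)^0$, since its image under $\bigwedge^2\delta_{\mathcal{K}_n,n}$ restricts over $\mathcal{K}_\infty$ to the cup product of the two geometric Kummer classes, which is nonzero; so $HS_n$ is not even defined on that generator and your case division does not literally make sense. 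Second, and more seriously, the crucial point — that for any homologically trivial combination supported on the summand $\widehat{\Omega}^2_{\mathcal{O}/\mathfrak{o}}\otimes_{\mathfrak{o}}\mathfrak{o}_n$ (which $pr$ kills) the class $HS_n\bigl(\bigwedge^2\delta_{\mathcal{K}_n,n}(\cdot)\bigr)$ vanishes — is exactly the step you yourself flag as delicate and then do not carry out. The appeal to ``antisymmetry of $\{T_1,T_2\}$'' is not an argument here: once the leading $E_\infty^{0,2}$-term of a cup product vanishes, its residual $\mathrm{gr}^1$-class is a secondary invariant that antisymmetry does not control, and Lemma \ref{colmez} computes an $H^1$, not the relevant $H^2$ or its filtration. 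As written, the hardest case of the lemma is left unproven, so the proposal does not yet constitute a proof.
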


\begin{proof} The statement of the Lemma is a direct consequence of \cite[Lemma 3.5.3 (ii)]{scholl}, which does not use that the big local field is of dimension $1$. 
\end{proof}

\begin{proof}[Proof of Theorem \ref{main22}]
We consider the diagram
$$
\begin{tikzcd}[column sep=tiny]
\left(  K_2(\mathcal{O}_n) \otimes \mu_{p^n}^{-1} \right)^0 \ar[r] \ar[d] & H^2(\mathcal{K}_n, \Z/p^n\Z(1))^0  \ar[d] \ar[r] & H^1(K_n, H^1(\mathcal{K}_\infty,\Z/p^n\Z(1))) \ar[d]\\
\left( \Omega^2_{\mathcal{O}_n/\mathfrak{o}}(-1) \right)^0 \ar[r] \ar[d] & H^2\left(\mathcal{K}_n, \left(\,_{p^n}\widehat{\Omega}^1_{\overline{\mathcal{O}}/\mathfrak{o}}\right)^{\otimes 2}(-1)\right)^0 \ar[r] & H^1\left(K_n, H^1\left(\mathcal{K}_\infty, \left( \,_{p^n} \widehat{\Omega}^1_{\overline{\mathcal{O}}/\mathfrak{o}}\right)^{\otimes 2}(-1)\right)\right)\\
\Omega^1_{\mathfrak{o}_n/\mathfrak{o}}(-1) \otimes \widehat{\Omega}^1_{\mathcal{O}/\mathfrak{o}} \ar[d] \ar[r] & H^1(K_n, \,_{p^n} \Omega^1_{\overline{\mathfrak{o}}/\mathfrak{o}}(-1) \otimes \widehat{\Omega}^1_{\mathcal{O}/\mathfrak{o}}) \ar[r, "(*)"] & H^1(K_n, T) \ar[u, equal]\\
\mathfrak{o}_n/\mathfrak{d}_n \otimes \widehat{\Omega}^1_{\mathcal{O}/\mathfrak{o}} \ar[r] & H^1(K_n, \mathfrak{a}/p^n \otimes \widehat{\Omega}^1_{\mathcal{O}/\mathfrak{o}}) \ar[u, "\sim"] & \ar[l] H^1(K_n, \mathcal{O}_\infty^\times/p^n) \arrow[bend right=90]{uuu},
\end{tikzcd}
$$
where $T$ denotes the Galois module $\,_{p^n} \Omega^1_{\overline{\mathfrak{o}}/\mathfrak{o}}(-1) \otimes H^1(\mathcal{K}_\infty, \,_{p^n} \widehat{\Omega}^1_{\overline{\mathcal{O}}/\mathfrak{o}})$. The left hand upper square commutes according to Lemma \ref{cdul}. The right hand upper square commutes by functoriality of the Hochschild-Serre spectral sequence. The middle rectangle commutes according to Lemma \ref{middle-rectangle}. The left hand lower square commutes according to \cite[Lemma 3.3.8]{scholl}. The remaining part of the diagram commutes by \cite[Lemma 3.5.4]{scholl}, as this statement does not use that the big local field is of dimension $1$. The statement of the Theorem thus follows from the fact that the kernel and cokernel of the map labeled $(*)$ are killed by $p^e$ for some integer $e$ according to Proposition \ref{hyodo-tate}.
\end{proof}

\begin{proof}[Proof of Theorem \ref{cdfinite}] The proof is exactly the same as the one of \cite[Theorem 3.2.3]{scholl}, using Theorem \ref{main22}. We give some explanations for the convenience of the reader. We have the homomorphism 
$$
\mrm{Fil}^1 H^1_{dR}(\mathcal{Y}_H(N))=H^0\left(\overline{\mathcal{Y}_H(N)}, \Omega^1_{\overline{\mathcal{Y}_H(N)}}(\log) \right) \rightarrow \widehat{\Omega}^1_{\mathcal{O}/\mathfrak{o}}
$$
induced by the inclusion $\eta \rightarrow \overline{\mathcal{Y}_H(N)}$ of the generic point $\eta$ of the special fiber of $ \overline{\mathcal{Y}_H(N)}$. As the fibers of $\overline{\mathcal{Y}_H(N)}/\mathfrak{o}$ are connected, this homomorphism is injective and its cokernel is torsion free. As a consequence the commutativity of the diagram of Theorem \ref{main22} up to $p^e$ torsion for some integer $e$ follows from the statement of Theorem \ref{main22} after localization to $\Spec \mathcal{O}$.
\end{proof}

\end{document}